\documentclass[11pt,a4paper,reqno]{amsart}
    \usepackage{vmargin,color}
    \usepackage[utf8]{inputenc}
    \usepackage[colorlinks=true,linkcolor=black,citecolor=black]{hyperref}
    \usepackage{amsmath,amsfonts,amsthm,epsfig,graphicx,amssymb}
    \usepackage{mathtools}
    \usepackage{esint}
    \usepackage{caption}

    \newcommand{\CC}{\mathbf{C}}
    \newcommand{\DD}{\mathbf{D}}
    \newcommand{\cald}{\mathcal{D}}

    \newcommand{\E}{\mathcal{E}}
    \newcommand{\F}{\mathcal F}
    \newcommand{\G}{\mathcal G}
    
    \renewcommand{\H}{\mathcal{H}}

    \renewcommand{\P}{\mathcal{P}}
    \newcommand{\Sc}{\mathcal S}
    
    \newcommand{\X}{\mathcal{X}}

    \newcommand{\N}{\mathbb{N}}

    \newcommand{\R}{\mathbb{R}}
    \renewcommand{\SS}{\mathbb{S}}

    \newcommand{\NN}{\mathcal{N}}
    \newcommand{\La}{\Lambda}
    
    \renewcommand{\S}{\Sigma}
    \renewcommand{\a}{\alpha}
    \renewcommand{\b}{\beta}
    
    \newcommand{\de}{\delta}
    \newcommand{\e}{\varepsilon}

    \newcommand{\s}{\sigma}
    \renewcommand{\t}{\theta}
    \newcommand{\om}{\omega}
    
    \newcommand{\Lip}{{\rm Lip}}

    \newcommand{\Div}{{\rm div}\,}
    
    \newcommand{\dist}{{\rm dist}}

    \newcommand{\weakstar}{\stackrel{\scriptscriptstyle{*}}{\rightharpoonup}}
    
    \newcommand{\ov}{\overline}
    
    \newcommand{\pa}{\partial}

    \newcommand{\eps}{\e}

    \newcommand{\h}{\H^n}
    \newcommand{\rn}{\mathbb{R}^{n+1}}

    \newcommand{\A}{\mathcal{A}}
    \newcommand\restr[2]{{
      \left.\kern-\nulldelimiterspace 
      #1 
      \right|_{#2} 
      }}
    
    \newcommand{\lip}{\mathrm{Lip}}

    \theoremstyle{plain}
    \newtheorem{theorem}{Theorem}[section]
    \newtheorem{lemma}[theorem]{Lemma}

    \newtheorem*{theorem*}{Theorem}
    \newtheorem*{corollary*}{Corollary}

    \theoremstyle{definition}

    \newtheorem{remark}[theorem]{Remark}

    \newtheorem*{notation*}{Notation}

    \numberwithin{equation}{section}
    \numberwithin{figure}{section}

    \usepackage{enumitem}
    \newcommand{\hnsub}{\stackrel{\h}{\subset}}
    \newcommand{\pas}{\pa^*}
    
    \newcommand{\hneq}{\stackrel{\h}{=}}
    \newcommand{\ps}{P^*_{\rm wet}}
    
    \newcommand{\p}{\rho}
    \newcommand{\MS}{\text{MS}}

    \allowdisplaybreaks

\title{Isoperimetric clusters with a liquid phase}

\author{Jake Wellington}
\address{Department of Mathematics, The University of Texas at Austin, 2515 Speedway, Stop C1200, Austin TX 78712-1202, United States of America}
\email{wellingtj@utexas.edu}

\begin{document}

\begin{abstract}
    A wet foam is a cluster of air chambers whose interfaces are coated by a single liquid chamber of prescribed volume. In the corresponding isoperimetric problem the liquid chamber may collapse onto films of zero thickness, so that minimizers need not exist. We identify the relaxed energy accounting for such collapsed films and prove that limits of minimizing sequences are its minimizers. The resulting compactness and energy representation theorems lead to the regularity of these generalized minimizers, including an equilibrium law for the pressures across collapsed films. Finally, as the liquid volume vanishes, the minimal energies converge to those of the classical dry foam.
\end{abstract}


\maketitle

\tableofcontents

\section{Introduction}

\subsection{Overview} A foam can most simply be described as some number of bubbles packed together. Almgren's theory of isoperimetric clusters as outlined in Part IV of \cite{Maggi_Book} provides the prevailing mathematical theory behind what is referred to as a `dry' foam (see \cite{weairefoam} Section 1.3 or \cite{cantatfoam} Chapter 2, Sections 3 and 4). An $N$-cluster is a family $\E=\{\E(h)\}_{h=1}^N$ of $N$-many sets of finite perimeter in $\R^{n+1}$
with {\bf exterior} given by $$ \E(0) = \rn\setminus\bigcup_{h=1}^N\E(h).$$ Whenever we modify the chambers $\E(1),\dots,\E(N)$ of a cluster, the exterior chamber is always understood to be redefined as the complement of their union. The {\bf volume of $\E$} is the vector
\[
|\E|=\big(|\E(1)|,\cdots,|\E(N)|\big)\,,\qquad 0<|\E(h)|<\infty\,.
\]

Using a soap bubble as a concrete example, the air inside the bubbles is given by the chambers $\E(h)$, whereas the soap itself will be the boundaries $\pa\E(h)$. A dry foam which can appear `in nature' will be the one that encloses the correct volume of air, while minimizing some notion of perimeter. While simpler to discuss mathematically, a dry foam has a few major disadvantages in modeling capacity that are discussed in \cite{King_2022}. The first of these issues is that the soap itself is assumed to have no volume, which introduces a problem with scaling. Given a `natural' foam configuration, scaling the entire foam by twofold or even a thousandfold will still produce a `natural' foam configuration which minimizes perimeter given certain volume constraints. In other words, there is no size restriction for a bubble formed by a vanishingly small amount of fluid in the dry model. 

This issue is rectified by introducing the theory of a `wet' foam, which amounts to giving volume to the liquid. It is easiest to understand what this means in terms of a dry foam. Given a dry foam with $N$ chambers (regions of air), a wet foam would look like a dry foam with $N+1$ chambers, where the extra chamber corresponds to the liquid, and should engulf the other chambers. Rigorously, given $\e>0$, a {\bf wet $N$-cluster} with liquid volume $\e$, is an $(N+1)$-cluster $\E$ with
\[
|\E(N+1)|=\e\,,
\]
and such that
\[
\bigcup_{h=1}^N\pa^*\E(h)\stackrel{\H^n}{\subset}\pa^*\E(N+1)\,.
\] 
Perhaps the easiest case to illustrate is the case of the two dimensional 1-cluster. The dry model is famously a circle. The wet model is going to be two concentric circles. The {\bf perimeter} of a wet $N$-cluster is defined by
\begin{eqnarray*}
P_{\rm wet}(\E)\!\!&=&\!\!P(\E(N+1))\,.
\end{eqnarray*}
We are interested in the {\bf wet isoperimetric problem}
\begin{equation}\label{main problem}
    \psi_\e(v_1,...,v_N)=\inf\Big\{P_{\rm wet}(\E):|\E| =(v_1,...,v_N,\e)\Big\}\,,
\end{equation}
where the infimum ranges among wet $N$-clusters. Minimizing sequences may develop collapsed liquid films, and in general the infimum need not be attained in the class of wet clusters: for $N=1$ it is (by concentric balls), while for $N\geq2$ and small $\e$ one expects that it is not. We do not pursue this question here.

This paper is going to be dedicated entirely to fleshing out the basic theory behind the wet isoperimetric problem. This is very much in the flavor of \cite{King_2022}, where this theory is developed in the case of the Plateau problem.

\subsection{Main results} 
 Minimizing sequences converge to $(N+1)$-clusters $\E$ that satisfy $|\E|=(v_1,...,v_N,\e)$ and minimize the following {\bf relaxed wet perimeter} functional
\[
\ps(\E)=P(\E(N+1))+2\,\sum_{0\le i<j\le N}\H^n\big(\pas\E(i)\cap\pas\E(j)\big)\,.
\]
Sometimes, it is more convenient to work with the equivalent form
\begin{equation}\label{wet energy equivalent form}
    \ps(\E) = \sum_{h=0}^{N}P(\E(h)).
\end{equation}

\begin{theorem}\label{existence thm main}
  For every $\e$, $v_1$, ..., $v_N$ positive numbers and $N\ge 1$, there exists an $(N+1)$-cluster $\E$ such that
  \begin{eqnarray}
  \label{gen min 1}
  |\E|\!\!&=&\!\!\big(v_1,...,v_N,\e\big)\,,
  \\
  \label{gen min 2}
  P^*_{\rm wet}(\E)\!\!&=&\!\!\psi_\e(v_1,...,v_N)\,.
  \end{eqnarray}
\end{theorem}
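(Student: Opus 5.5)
We prove the two inequalities $\inf\{\ps(\E):|\E|=(v_1,\dots,v_N,\e)\}\ge\psi_\e$ and $\le\psi_\e$ and combine them with a compactness argument.

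\emph{Upper bound for $\psi_\e$ by the direct method.} Take a minimizing sequence $\{\E_k\}$ of wet $N$-clusters for \eqref{main problem}. Since $\bigcup_{h\le N}\pas\E_k(h)\hnsub\pas\E_k(N+1)$, the equivalent form \eqref{wet energy equivalent form} gives
\[
\ps(\E_k)=\sum_{h=0}^{N}P(\E_k(h))\le (N+1)\,P(\E_k(N+1))=(N+1)P_{\rm wet}(\E_k),
\]
so all chamber perimeters are uniformly bounded. Since $P_{\rm wet}(\E_k)=P(\E_k(N+1))$ and every interface is wet, the inclusion shows that each $\pas\E_k(h)\cap\pas\E_k(j)$ with $h,j\le N$ is $\H^n$-negligible. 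Hence $\pas\E_k(h)\hnsub\pas\E_k(h)\cap\pas\E_k(N+1)$ for $h\ne N+1$. Using $\sum_h P(\E_k(h))=2\sum_{i<j}\H^n(\pas\E_k(i)\cap\pas\E_k(j))$, only pairs involving $N+1$ contribute, and this gives $\ps(\E_k)=2P(\E_k(N+1))-\H^n(\pas\E_k(N+1)\cap\pas\E_k(0))$. Rather than chase identities, the key inequality is simply
\[
\ps(\E_k)\le P_{\rm wet}(\E_k)
\]
for wet clusters. To see it, use the first definition of $\ps$. The doubled terms $2\H^n(\pas\E(i)\cap\pas\E(j))$ with $i,j\le N$ vanish by wetness, and the remaining term is $P(\E(N+1))$. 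So in fact $\ps(\E_k)=P_{\rm wet}(\E_k)$.

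\emph{Compactness with volume recovery.} Next we apply the standard nucleation/truncation lemma for clusters from Almgren's theory (\cite{Maggi_Book}, Part IV). Translating pieces of the chambers and using the uniform perimeter bound, we replace $\E_k$ by clusters $\E_k'$ with $\ps(\E_k')\le\ps(\E_k)+o(1)$ and $|\E_k'|=|\E_k|$, all contained in a fixed ball. This is the main obstacle. The standard argument handles mass escaping to infinity by cutting and re-translating bounded components, and the cutting is controlled via the coarea/slicing bound $\sum_h\H^n(\E_k(h)\cap\pa B_r)$ small for good radii. Because $\ps=\sum_h P(\E(h))$ is the usual cluster perimeter up to the exterior term, which is also a sum of chamber perimeters, these cut-and-translate operations cost at most a few times the slice measure. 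We then work with $\ps$ rather than $P_{\rm wet}$, so wetness need not be preserved along the sequence. Volume fixing is done by the usual volume-fixing variations, which change $\ps$ by $O(|\delta v|)$. By BV compactness we extract $\E_k'\to\E$ in $L^1$ with $|\E|=(v_1,\dots,v_N,\e)$. By lower semicontinuity of each $P(\E(h))$,
\[
\ps(\E)\le\liminf\ps(\E_k')\le\psi_\e.
\]

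\emph{Lower bound.} It remains to show $\ps(\F)\ge\psi_\e$ for every $(N+1)$-cluster $\F$ with the prescribed volume. Given such $\F$ and $\delta>0$, we build a wet cluster by thickening. Each dry interface $\pas\F(i)\cap\pas\F(j)$ with $i,j\le N$ is replaced by a thin liquid layer, obtained by removing from $\F(i),\F(j)$ an $\eta$-neighbourhood of that interface and giving it to $\F(N+1)$. For $\eta\to0$ (after approximating $\F$ by a cluster with smooth/polyhedral interfaces, which has nearly the same $\ps$), the new liquid chamber has perimeter approximately $P(\F(N+1))+2\sum_{i<j\le N}\H^n(\pas\F(i)\cap\pas\F(j))$. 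Here the exterior interfaces $\pas\F(0)\cap\pas\F(j)$ also need a film, which is counted twice in $\ps$ as well. The volumes then shift by $O(\eta)$, and we restore them with volume-fixing variations at cost $O(\eta)$. This yields wet clusters with $P_{\rm wet}\le\ps(\F)+\delta$, so $\psi_\e\le\ps(\F)$. Combining the two bounds, $\E$ satisfies \eqref{gen min 1}--\eqref{gen min 2}.
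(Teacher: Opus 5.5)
Your argument is correct in outline, but it takes a genuinely different route from the paper.

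\textbf{Your route.} You first prove the nontrivial half of the relaxation identity, $\ps(\F)\ge\psi_\e$ for every $(N+1)$-cluster with the prescribed volumes. This is the content of the paper's Theorem \ref{relaxation thm main}, which the paper obtains only later, from Theorem \ref{approximation thm main later}. Theorem \ref{existence thm main} then becomes a direct-method statement. Since $P(\E(N+1))\le\ps(\E)$, the functional $\ps=\sum_{h=0}^N P(\E(h))$ lies between the cluster perimeter and twice it. So the nucleation/truncation/volume-fixing compactness runs with no need to preserve wetness; the paper's Lemma \ref{truncation for wet cluster} is exactly this truncation. Lower semicontinuity of each $P(\E(h))$ then gives $\ps(\E)\le\psi_\e$.

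\textbf{The paper's route.} The paper proves Theorem \ref{existence thm main} with no approximation result. It keeps the sequence wet, takes the weak-$*$ limit $\mu$ of $\H^n\llcorner\pas\E_k(N+1)$, and identifies $\mu=\H^n\llcorner\pas\E(N+1)+2\H^n\llcorner(\text{dry interfaces})$. The tools are cup, cone and slab competitors, density bounds and Preiss' theorem.

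\textbf{What each buys.} Your route is far shorter. Once $\ps(\E_k)\to\ps(\E)$ along a bounded wet sequence, it even recovers that identification. Lower semicontinuity of each term forces $P(\E_k(h))\to P(\E(h))$ for every $h\le N$, hence $\H^n\llcorner\pas\E_k(h)\weakstar\H^n\llcorner\pas\E(h)$. For wet clusters, $\H^n\llcorner\pas\E_k(N+1)=\sum_{h\le N}\H^n\llcorner\pas\E_k(h)$.

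The price is that all the difficulty moves into your lower bound. There, the step ``approximate $\F$ by a cluster with polyhedral interfaces and nearly the same $\ps$'' is not routine smoothing. Approximating chambers one at a time destroys the partition, and smooth interfaces are not available at junctions. What you need is the density of polyhedral Caccioppoli partitions with convergence of the labelled interface measures (Braides--Conti--Garroni), and you should cite it explicitly. The paper avoids polyhedra altogether with Novack's construction. It covers the rectifiable dry set directly by thin Lipschitz open sets whose perimeters are at most twice the area of the dry interface they contain, up to a small error. Granting that input, your thickening, wetness check and volume fixing are fine.

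\textbf{One slip.} Your intermediate formula $\ps(\E_k)=2P(\E_k(N+1))-\H^n(\pas\E_k(N+1)\cap\pas\E_k(0))$ is wrong; for a wet cluster $\ps(\E_k)=P(\E_k(N+1))$ exactly. You discard it, though, and derive the correct identity directly.
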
\noindent
This motivates us to consider the \textbf{relaxed wet isoperimetric problem}
\begin{equation}\label{relaxed problem defn}
    \varphi_\e(v_1,\dots,v_N) = \inf\Big\{P^*_{\rm wet}(\E):|\E|_{\rm liquid} = \e, |\E|_{\rm air} = (v_1,\dots,v_N)\Big\},
\end{equation}
where the infimum now ranges over all $(N+1)$-clusters. As it turns out, the generalized minimizers of the wet isoperimetric problem are regular minimizers for the relaxation. This is nicely summarized by the following theorem:
\begin{theorem}\label{relaxation thm main} If $v_1,\dots, v_N > 0$, $\psi_\eps$ is as in \eqref{main problem} and $\varphi_\eps$ is as in \eqref{relaxed problem defn}, then for any $\eps > 0$
    \begin{equation*}
        \varphi_\e(v_1,\dots,v_N) = \psi_\e(v_1,\dots,v_N).
    \end{equation*}
\end{theorem}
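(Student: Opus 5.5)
Theorem \ref{relaxation thm main} follows from two inequalities, $\varphi_\e\le\psi_\e$ and $\psi_\e\le\varphi_\e$.

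\textbf{The easy inequality $\varphi_\e\le\psi_\e$.} Let $\E$ be a wet $N$-cluster with $|\E|=(v_1,\dots,v_N,\e)$. It suffices to show $\ps(\E)=P_{\rm wet}(\E)$. By the wetting condition $\pas\E(h)\hnsub\pas\E(N+1)$ for $h=1,\dots,N$. At $\h$-a.e. point of $\pas\E(h)$ only two reduced boundaries meet, and one of them must be $\E(N+1)$. Hence $\h(\pas\E(i)\cap\pas\E(j))=0$ for $1\le i<j\le N$. The same holds for $i=0$, since $\pas\E(0)\cap\pas\E(j)$ is contained in $\pas\E(j)$. Therefore the sum in the definition of $\ps$ vanishes and $\ps(\E)=P(\E(N+1))$. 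Taking the infimum gives $\varphi_\e\le\psi_\e$.

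\textbf{The inequality $\psi_\e\le\varphi_\e$.} Fix an arbitrary $(N+1)$-cluster $\E$ with the prescribed volumes. We construct wet clusters $\E_k$ with the same volumes and $\limsup_k P_{\rm wet}(\E_k)\le\ps(\E)$. This recovery-sequence step is the main obstacle.

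\emph{Thin liquid layers.} For small $\de>0$, thicken every dry interface $\pas\E(i)\cap\pas\E(j)$ with $0\le i<j\le N$ into a liquid layer. The natural choice is to set
\[
\E_\de(N+1)=\E(N+1)\cup\bigcup_{h=0}^N\big(\E(h)\cap I_\de(\pa\E(h))\big),
\]
where $I_\de$ is a $\de$-neighbourhood, suitably restricted to the dry parts of the boundaries, and to remove this layer from each air chamber. The layer is bounded by two parallel copies of each dry interface. Its perimeter is therefore approximately $2\h(\pas\E(i)\cap\pas\E(j))$, while the liquid–air interfaces are almost unchanged. This reproduces exactly the factor $2$ in $\ps$.

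\emph{Approximation step.} To make the perimeter estimate rigorous, I would first approximate $\E$ by clusters with polyhedral, or smooth up to $\h$-null sets, interfaces. I would use the standard density results for clusters (Maggi, Part IV), which give convergence of every $P(\E(h))$ and hence of $\ps$ by \eqref{wet energy equivalent form}. For such regular interfaces, tubular neighbourhood and Minkowski content estimates give
\[
P(\E_\de(N+1))\le \ps(\E)+o(1) \quad\text{as } \de\to0.
\]
A parallel-surface argument then handles the junctions, which have $\h$-measure zero and contribute $o(1)$. After this construction every air–air and air–exterior interface is coated by liquid, so $\E_\de$ is a wet cluster.

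\emph{Volume correction.} The volumes change by $O(\de)$: the liquid gains volume and the air chambers lose it. I would restore the exact volumes using the volume-fixing variations for clusters (Maggi, Part IV). These are small diffeomorphic deformations supported in balls, placed away from the interfaces, with perimeter cost $O(\de)$. They can also be arranged to preserve the wetting inclusion, for instance by applying them only inside chamber interiors or by moving liquid–air interfaces. Letting $\de\to0$ and then removing the approximation gives $\psi_\e\le\ps(\E)$. Taking the infimum over $\E$ yields $\psi_\e\le\varphi_\e$, which completes the proof.

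One degenerate case needs attention. If $\E(N+1)$ has volume $\e$ but touches no air chamber, the layer construction still works. If the liquid is entirely separate, its perimeter counts in both functionals, so there is no issue.
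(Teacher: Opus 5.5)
Your argument for $\varphi_\e\le\psi_\e$ is correct, and more explicit than the paper's one-line remark: three reduced boundaries can never meet, since every point of $\pas\E(h)$ lies in $\E(h)^{(1/2)}$.

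For $\psi_\e\le\varphi_\e$ you take a different route from the paper, and it can be completed. However, the step that carries all the difficulty is misattributed. Maggi's Part IV contains no density theorem for clusters. Approximating an arbitrary Caccioppoli partition by polyhedral (or piecewise smooth) partitions with convergence of \emph{every} $P(\E(h))$ is not a routine consequence of the single-set result, because mollifying the chambers separately does not produce a partition.

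What you need is the density theorem for polyhedral partitions of Braides, Conti and Garroni (Calc.\ Var.\ PDE, 2017). It gives $L^1$-approximation of a finite partition of a bounded Lipschitz domain, with convergence of the total interface area. Convergence of each $P(\E_k(h))$ then follows by lower semicontinuity, and convergence of $\ps$ by \eqref{wet energy equivalent form}. Since that theorem is stated in a bounded domain, first truncate $\E$ to a large ball $B_R$ with $\sum_{h\ge1}\h(\E(h)\cap\pa B_R)$ small, and refix the volumes.

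Granting this, your layers work. For a compact polyhedral dry set $D$, the Steiner formula gives $P(L_\de)\le 2\h(D)+O(\de)$ for the open tube $L_\de$. Hence $P(\E(N+1)\cup L_\de)\le\ps(\E)+O(\de)$, and the thickened cluster is wet for a.e.\ $\de$. Choose $\de$ depending on the approximant, then fix volumes with Maggi's variations, which are uniform on an $L^1$-neighbourhood of $\E$; this closes the diagonal argument.

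One correction: volume-fixing variations cannot be placed away from the interfaces, since they change volumes precisely by moving interfaces. No special arrangement is needed, though. Every diffeomorphism $f$ satisfies $\pas f(E)=f(\pas E)$, so the wet inclusion is automatically preserved.

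The paper proves Theorem \ref{approximation thm main later} without regularizing $\E$ at all. Following \cite{Novack_2024}, it writes the rectifiable dry set $K\setminus\pas\E(N+1)$ as a union of Lipschitz graphs. It covers more than half of this set by disjoint open Lipschitz sets $D_m$ of tiny volume with $P(D_m)\le 2\h(f_m(A_m))+\beta 2^{-m}$, and iterates on the remainder. It then sets $\F(N+1)=\E(N+1)\Delta F$ and $\F(h)=\E(h)\setminus F$, with $F=\bigcup_k\ov{\cald_k}$.

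The factor $2$ comes from the same ``two sheets'' mechanism as your layers. The difference is that the envelopes are built directly around rectifiable pieces, so the air chambers are untouched outside a set of small volume. Your route trades Novack's covering lemma for a partition-density theorem plus an elementary tube estimate. Both inputs are nontrivial, but yours is arguably cleaner to state, provided that theorem is invoked correctly rather than called standard.
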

This theorem leads to the conclusion that if $\E$ satisfies \eqref{gen min 1} and \eqref{gen min 2}, then the air chambers of $\E$ are $(\Lambda,r)$-perimeter minimizers. We use regularity theory associated with $(\La,r)$-minimizers as the primary tool to deduce $C^{1,\alpha}$ regularity of the chamber interfaces for minimal clusters away from a small dimensional singular set, and potential cusps formed by the singular part of the wet chamber. We remark this regularity is often defined in terms of disks and cylinders given by
    \begin{align*}
        \DD^\nu_r(x) &= x + \{y\in\nu^\perp:|y|<r\},\\
        \CC_r^\nu(x) &= \{y + t\nu : y\in \DD^\nu_r(x), |t|<r\}.
    \end{align*}
    Moreover, $C^{1,\a}$ regularity allows us to invoke a sharp regularity result for the two membranes problem. 
\begin{theorem}\label{regularity thm main}
    If an $(N+1)$-cluster $\E$ satisfies \eqref{gen min 1} and \eqref{gen min 2}, then, setting
    \begin{equation}\label{Singular set defn}
        K = \bigcup_{h=0}^{N+1}\pa\E(h),\,\,\,\Sigma = K\setminus\bigcup_{h=0}^{N+1} \pas\E(h),\,\,\,\S' = \pa\E(N+1)\setminus\pas\E(N+1)\,,
    \end{equation}
    the following hold:
  
    \medskip
  
    \noindent {\bf (i):} $\S = \emptyset$ for $1\leq n\leq 6$, $\S$ is locally finite for $n=7$ and $\H^s(\S) = 0$ for $s>n-7$ for $n\geq 8$;
    
    \medskip
  
    \noindent {\bf (ii):} $K\setminus\big(\S\cup \S'\big)$ is a smooth hypersurface, and there are constants $\lambda_1,\dots,\lambda_{N+1}\in\R$ (we set $\lambda_0 = 0$) such that, for every $i\neq j$, the interface $\pas\E(i)\cap\pas\E(j)\setminus(\S\cup\S')$ has constant mean curvature $(\lambda_i-\lambda_j)/(1+\mathbf{1}\{i,j\neq N+1\})$ with respect to $\nu_{\E(i)}$ (with the convention that a sphere has positive mean curvature with respect to its outer normal);

    \medskip

    \noindent {\bf (iii):} For every $x\in \S'\setminus\S, \,K$ is the union of two $C^{1,1}$ hypersurfaces which detach tangentially. More precisely, there are $r>0, \nu\in\SS^{n}$ and $u_1,u_2\in C^{1,1}(\DD^\nu_r(x))$ such that
    \begin{align*}
        u_1(x) = u_2(x) = 0, \,\,\,\,\,\,\,\,\,\,\,\, u_1\leq u_2 \text{ on } \DD^\nu_r(x),\,\,\,\,\,\,\,\,\,\,\,\, \{u_1<u_2\} \text{ is nonempty}
    \end{align*}
    and
    \begin{align}\label{c11 cap K}
        \CC^\nu_r(x) \cap K &= \bigcup_{i=1,2}\Big\{y+u_i(y)\nu:y\in \DD^{\nu}_r(x)\Big\}\\\label{c11 bdry}
        \CC^\nu_r(x) \cap \pas\E(N+1) &= \bigcup_{i=1,2}\Big\{y+u_i(y)\nu:y\in\{u_1<u_2\}\Big\}\\\label{c11 chamber}
        \CC^\nu_r(x) \cap \E(N+1) &= \bigcup_{i=1,2}\Big\{y+t\nu:y\in \DD^{\nu}_r(x),t\in(u_1(y),u_2(y))\Big\}.
    \end{align}
    Moreover, if $\E(i)$ and $\E(j)$ are the two chambers adjacent to $\E(N+1)$ in $\CC^\nu_r(x)$, then
    \begin{equation}\label{averaged pressure inequality}
        \lambda_i+\lambda_j > 2\lambda_{N+1}\,,
    \end{equation}
    that is, the mean curvatures $H_1$, $H_2$ of the graphs of $u_1$, $u_2$ with respect to $\nu$ satisfy $H_1 - H_2 = \lambda_i+\lambda_j-2\lambda_{N+1}>0$ on $\{u_1<u_2\}$: the two sheets detach with strictly positive relative curvature. In particular, at every point of $\S'\setminus\S$ at least one of the two adjacent chambers is at strictly higher pressure than the liquid and two chambers with $\lambda_h\leq\lambda_{N+1}$ are never separated by a collapsed film.

    \medskip

    \noindent {\bf (iv):} We have $\Gamma = \S'\setminus\S = \Gamma_{r}\cup\Gamma_s, \Gamma_{r}\cap\Gamma_s = \emptyset$. For every $x\in\Gamma_r$ there are $\rho > 0$ and $\beta\in(0,1)$ such that $\Gamma_r\cap B_\rho(x)$ is a $C^{1,\b}$ embedded $(n-1)$-dimensional manifold. $\Gamma_s$ is relatively closed in $\Gamma$ and can be partitioned into a family $\{\Gamma_s^k\}_{k=0}^{n-1}$ where $\Gamma^k_s$ is locally $\H^k$-rectifiable. 
\end{theorem}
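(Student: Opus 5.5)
The plan is to turn minimality for the relaxed problem into local almost-minimality statements, and then import known regularity theories piece by piece. By Theorem \ref{relaxation thm main}, any $\E$ satisfying \eqref{gen min 1}--\eqref{gen min 2} minimizes $\ps=\sum_{h=0}^N P(\E(h))$ among $(N+1)$-clusters with the same volumes.

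\textbf{Step 1: almost-minimality.} Using the standard volume-fixing variations for clusters (Almgren's lemma, as in Part IV of \cite{Maggi_Book}), I would show two things:
\begin{itemize}
\item each air chamber $\E(h)$, $1\le h\le N$, is a $(\La,r)$-perimeter minimizer;
\item the union structure near any point is an almost-minimizing cluster for the weighted energy in which interfaces $\pas\E(i)\cap\pas\E(j)$ with $i,j\neq N+1$ carry weight $2$, and interfaces with the liquid carry weight $1$.
\end{itemize}
The key point is that competitors may be formed chamberwise. Modifying a single air chamber inside a small ball, while compensating its volume far away, changes $\ps$ by $P(\E(h);B)$ plus controlled terms. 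This gives $(\La,r)$-minimality of $\E(h)$.

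\textbf{Step 2: interior regularity, (i) and (ii).} By De Giorgi–Federer regularity for $(\La,r)$-minimizers:
\begin{itemize}
\item each $\pa\E(h)$ is $C^{1,\a}$ off a closed set of dimension at most $n-7$;
\item points of $K$ lying in the reduced boundary of some chamber but not of $\E(N+1)$ are points where two air chambers meet along a collapsed film, or ordinary two-phase interfaces.
\end{itemize}
The triple-junction-type set $\S$ is handled by the same dimension-reduction argument as for clusters. This needs a monotonicity formula for the weighted energy, which holds since the weights $1$ and $2$ satisfy the triangle inequality; one can also use the air chambers' $(\La,r)$-minimality directly, since near a point of $\S$ each chamber boundary is a minimizing boundary.

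Constant mean curvature with Lagrange multipliers follows from the first variation of $\ps$ under volume constraints. On a collapsed air–air interface, the energy weight $2$ produces the factor $1/(1+\mathbf 1\{i,j\ne N+1\})$. Smoothness then follows by Schauder theory.

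\textbf{Step 3: collapsed films, (iii).} At $x\in\S'\setminus\S$, both adjacent air chambers $\E(i)$ and $\E(j)$ have $C^{1,\a}$ boundaries through $x$ with opposite normals. Write these boundaries as graphs $u_1\le u_2$ over $\DD^\nu_r(x)$, with the liquid filling $\{u_1<u_2\}$. Minimality then says that $(u_1,u_2)$ solves a two-membranes obstacle problem: $u_1$ and $u_2$ have prescribed mean curvatures $\lambda_i-\lambda_{N+1}$ and $\lambda_{N+1}-\lambda_j$ where they are apart, subject to $u_1\le u_2$. The function $w=u_2-u_1$ satisfies an obstacle-type equation for a quasilinear operator. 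Applying the sharp $C^{1,1}$ regularity for two-membranes problems gives $u_1,u_2\in C^{1,1}$.

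The inequality \eqref{averaged pressure inequality} is proved by contradiction. If $\lambda_i+\lambda_j\le2\lambda_{N+1}$, then $w$ would be a nonnegative supersolution vanishing at $x$. The strong maximum principle would force $w\equiv0$, or would contradict the nonemptiness of $\{u_1<u_2\}$, which comes from $x\in\pa\E(N+1)$. Alternatively, I would construct a competitor that opens the film and lowers the energy.

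\textbf{Step 4: the free boundary, (iv).} The free boundary $\Gamma$ is the free boundary $\pa\{w>0\}$ of the obstacle problem with strictly positive right-hand side $\lambda_i+\lambda_j-2\lambda_{N+1}$. The Caffarelli regular/singular dichotomy, together with its stratification and rectifiability (Colombo–Spolaor–Velichkov / Garofalo–Petrosyan), gives the decomposition $\Gamma=\Gamma_r\cup\Gamma_s$ with the stated properties.

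\textbf{Main obstacle.} The main difficulty is Step 3: showing that near $\S'\setminus\S$ the cluster is genuinely described by two graphs solving the two-membranes variational inequality, with the correct constants. This requires admissible competitors that move one sheet independently while preserving all volumes and the ordering. I would first try one-sided graph perturbations of a single chamber, plus Step 1's volume compensation.
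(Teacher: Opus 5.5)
Your outline follows the paper's architecture: $(\La,r)$-minimality of the dry chambers, Lagrange multipliers from volume-fixing variations, two $C^{1,\a}$ graphs at $x\in\S'\setminus\S$ from the regularity of $\E(i),\E(j)$, two-membranes $C^{1,1}$ theory, and obstacle free-boundary theory. You argue differently only for \eqref{averaged pressure inequality}, and as written that step has a hole.

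If $C:=2\lambda_{N+1}-\lambda_i-\lambda_j\geq 0$, the Euler--Lagrange equations give $-\Div(A\nabla w)=C$ only on $\{u_1<u_2\}$. This open set does not contain $x$, so you cannot apply the strong maximum principle at $x$. You need $-\Div(A\nabla w)\geq C$ distributionally on all of $\DD^\nu_r(x)$, i.e.\ the contact-set half of the variational inequality.

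That inequality does follow from the one-sided perturbations you suggest. In the paper's labeling, $\E(j)$ lies below $u_1$ and $\E(i)$ above $u_2$. Pushing $u_1$ down, or $u_2$ up, only shrinks one dry chamber into the liquid. It is always admissible and changes $\ps$ only through that chamber's perimeter. With the volume-fixing fields of Step two it gives $\MS(u_1)\leq\lambda_j-\lambda_{N+1}$ and $\MS(u_2)\geq\lambda_{N+1}-\lambda_i$ on the whole disk. Subtracting, $w\geq0$ is a weak supersolution with interior minimum $0$ at $x$, so $w\equiv0$, contradicting $x\in\pa\E(N+1)$.

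The paper never touches the contact set. It uses only the equation in $\{u_1<u_2\}$ and $\nabla w=0$ on $\{w=0\}$. It takes a maximal ball in $\{u_1<u_2\}$ touching $\{w=0\}$ at $y_0$, and applies a Hopf boundary-point lemma for divergence-form operators with Dini (here $C^{0,\a}$) coefficients \cite{ApushkinskayaNazarov2019}. This gives $\pa_n w(y_0)<0$, a contradiction. The paper's route needs only $C^{1,\a}$ information and no new competitors. Yours costs the one-sided variations but yields the full variational inequality, which a careful use of the two-membranes $C^{1,1}$ theorem needs anyway. So the ``main obstacle'' you flag is lighter than you think.

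Two smaller corrections. For (i), drop the ``dimension reduction as for clusters'' option. For clusters it leaves $(n-1)$-dimensional triple junctions, which are exactly what cannot occur here. The correct argument uses $\pa\E(N+1)\subset\bigcup_{h=0}^N\pa\E(h)$, so $\S\subset\bigcup_{h=0}^N(\pa\E(h)\setminus\pas\E(h))$, a union of singular sets of $(\La,r)$-minimizers. This also requires $(\La,r)$-minimality of the exterior chamber $\E(0)$ (same competitor), not only of $\E(1),\dots,\E(N)$.

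For (iv), the Laplacian-based references do not apply verbatim to $\Div(A\nabla w)$. You need the Lipschitz-coefficient theory \cite{focardi2013monotonicityformulasobstacleproblems}, which is available because the $C^{1,1}$ bound makes $A$ Lipschitz.
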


Finally, we show that the relaxed problem converges to the classical (dry) isoperimetric cluster problem as the liquid volume vanishes. Recall that the perimeter of a dry $N$-cluster $\F$ is
\begin{equation}\label{dry perimeter}
    P(\F) = \sum_{0\le h<k\le N}\h\big(\pas\F(h)\cap\pas\F(k)\big) = \frac{1}{2}\sum_{h=0}^N P(\F(h)),
\end{equation}
see \cite[Section 29]{Maggi_Book}. We define the \textbf{dry isoperimetric energy}
\begin{equation}\label{dry problem defn}
    \varphi_0(v_1,\dots,v_N) = 2\inf\Big\{P(\F) : |\F| = (v_1,\dots,v_N)\Big\},
\end{equation}
where the infimum ranges over $N$-clusters. By \eqref{dry perimeter}, $\varphi_0(v) = \inf\{\sum_{h=0}^NP(\F(h)):|\F| = v\}$ is the formal $\e = 0$ case of \eqref{relaxed problem defn}, and the infimum is attained by \cite[Theorem 29.1]{Maggi_Book}.
\begin{theorem}\label{convergence to plateau}
    If $v_1,\dots,v_N>0$, $\varphi_\e$ is as in \eqref{relaxed problem defn} and $\varphi_0$ is as in \eqref{dry problem defn}, then
    $$\lim_{\varepsilon \to 0^+} \varphi_\e(v_1,\dots,v_N) = \varphi_0(v_1,\dots,v_N).$$
\end{theorem}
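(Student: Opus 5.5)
We prove the two inequalities $\limsup_{\e\to0^+}\varphi_\e\le\varphi_0$ and $\liminf_{\e\to0^+}\varphi_\e\ge\varphi_0$ separately, writing $v=(v_1,\dots,v_N)$ throughout.

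\emph{Upper bound.} Let $\F$ be a dry minimizer for $\varphi_0(v)$; it exists by \cite[Theorem 29.1]{Maggi_Book} and may be taken bounded. Choose a small ball $B_r(x)$ centered at a point $x$ of the exterior $\F(0)$ at positive distance from $\bigcup_h\F(h)$, with $|B_r|=\e$. Define the $(N+1)$-cluster $\E_\e$ by
\[
\E_\e(h)=\F(h)\quad (1\le h\le N),\qquad \E_\e(N+1)=B_r(x).
\]
Then $|\E_\e|=(v,\e)$, and by \eqref{wet energy equivalent form},
\[
\ps(\E_\e)=\sum_{h=1}^N P(\F(h))+P(\F(0)\setminus B_r)+P(B_r)\le \sum_{h=0}^N P(\F(h))+2P(B_r).
\]
Since $P(B_r)=c\,\e^{n/(n+1)}\to0$, we get $\varphi_\e(v)\le\varphi_0(v)+C\e^{n/(n+1)}$. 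Hence $\limsup_{\e\to0^+}\varphi_\e\le\varphi_0$. No regularity of $\F$ is needed here.

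\emph{Lower bound.} Take $\e_k\to0$ and, for each $k$, a cluster $\E_k$ with $|\E_k|=(v,\e_k)$ and $\ps(\E_k)\le\varphi_{\e_k}(v)+1/k$. We could use minimizers from Theorems \ref{existence thm main} and \ref{relaxation thm main}, but almost-minimizers suffice. Absorb the liquid into the air chambers: define the $N$-cluster $\F_k$ by
\[
\F_k(1)=\E_k(1)\cup\E_k(N+1),\qquad \F_k(h)=\E_k(h)\quad (2\le h\le N).
\]
Then $\F_k(0)=\E_k(0)$, and by subadditivity of perimeter under union,
\[
\sum_{h=0}^N P(\F_k(h))\le \sum_{h=0}^{N+1}P(\E_k(h))=\ps(\E_k).
\]
The volumes are $|\F_k|=(v_1+\e_k,v_2,\dots,v_N)$. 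Therefore
\[
\varphi_{\e_k}(v)+1/k\ \ge\ \varphi_0(v_1+\e_k,v_2,\dots,v_N).
\]

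\emph{Main obstacle: continuity of $\varphi_0$.} It remains to show $\varphi_0(v_1+\e,v_2,\dots,v_N)\to\varphi_0(v)$, or at least that the $\liminf$ is $\ge\varphi_0(v)$. We plan to prove lower semicontinuity of $\varphi_0$ by the volume-fixing variations of \cite[Section 29]{Maggi_Book}. Given clusters $\F_k$ nearly optimal for $\varphi_0(v_1+\e_k,\dots)$, the volume-fixing variations give uniform constants $C,\eta$ such that volumes can be altered by $|\delta|<\eta$ at perimeter cost $\le C|\delta|$. This requires a uniform choice of the variation. We intend to obtain it by passing to an $L^1_{\rm loc}$ limit after the standard truncation/translation argument of \cite[Theorem 29.1]{Maggi_Book}, which handles loss of mass at infinity, or alternatively directly from the dry minimizer via the nucleation lemma. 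Applying this correction with $|\delta|=\e_k$ yields
\[
\varphi_0(v)\le \varphi_0(v_1+\e_k,\dots,v_N)+C\e_k.
\]
Combining with the previous display gives $\liminf_k\varphi_{\e_k}(v)\ge\varphi_0(v)$, which together with the upper bound proves the theorem.

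A cleaner alternative for this step is a scaling argument. Dilate $\F_k$ so that the first chamber has volume $v_1$; the remaining volumes then shrink by a factor $(1+O(\e_k))$ and the perimeter changes by the same factor. The small volume deficits in the other chambers are then restored by the local volume-fixing variation of a single fixed limit cluster, so only the constant attached to that one cluster is needed.
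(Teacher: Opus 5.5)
There is a genuine gap in your lower bound, at the absorption step. By \eqref{wet energy equivalent form}, $\ps(\E_k)=\sum_{h=0}^{N}P(\E_k(h))$. The perimeter of the liquid chamber is \emph{not} a separate summand; it is already counted, with weight one, inside the perimeters of the air chambers and the exterior. Subadditivity therefore only gives
\[
\sum_{h=0}^N P(\F_k(h))\le \sum_{h=0}^{N+1}P(\E_k(h))=\ps(\E_k)+P(\E_k(N+1)).
\]
Writing $I_j=\h(\pas\E_k(N+1)\cap\pas\E_k(j))$, one computes exactly
\[
\sum_{h=0}^N P(\F_k(h))-\ps(\E_k)=\sum_{0\le j\le N,\ j\neq 1}I_j-I_1 ,
\]
and this has no reason to be small. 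Concretely, take a liquid film of vanishing thickness laid along part of the interface between $\E_k(2)$ and the exterior. In $\ps$ it costs exactly what the dry interface it replaces costs: two faces, each with weight one. Its perimeter, however, stays bounded below. Absorbing it into $\E_k(1)$ turns it into two dry interfaces ($1$--$2$ and $1$--$0$), each counted twice, so its cost doubles. Your argument uses no property of $\E_k$ that excludes such configurations. Hence the inequality $\varphi_{\e_k}(v)+1/k\ge\varphi_0(v_1+\e_k,v_2,\dots,v_N)$ is not established, and absorbing into any other single chamber fails for the same reason. Even for exact minimizers you would need a separate proof that this defect vanishes, which is not available. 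The same misreading of $\ps$ appears in your upper bound: the true value there is $\sum_{h=0}^NP(\F(h))+P(B_r)$. It is harmless in that case, since you only need an upper bound.

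The missing idea is that the vanishing liquid volume must be exploited through lower semicontinuity, not through a pointwise modification of each $\E_k$. This is what the paper does. Nucleation and truncation (Lemmas \ref{nucleation lemma} and \ref{truncation for wet cluster}), followed by translating far-apart pieces, produce uniformly bounded clusters $\E^j_k$ with $\ps(\E^j_k)\le\ps(\E_k)$ and air-volume losses at most $\eta_j$. Since $|\E^j_k(N+1)|\to0$, any $L^1$-limit is an $N$-cluster $\E^j$ with $|\E^j|=v-\de^j$. Lower semicontinuity of $P(\E^j_k(h))$ for $h=0,\dots,N$ (the liquid perimeter never enters) then gives
\[
\liminf_k\ps(\E_k)\ge\sum_{h=0}^NP(\E^j(h))\ge\varphi_0(v-\de^j).
\]
Continuity of $\varphi_0$ closes the argument, and that part is elementary. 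You scale a bounded dry minimizer by some $\lambda\le1$ so that all volumes lie below the target, then add small disjoint far-away balls, as in the paper's step one. No uniform volume-fixing variations are needed, so what you identified as the main obstacle is in fact the easy part.
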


\addtocontents{toc}{\protect\setcounter{tocdepth}{-1}}
\section*{Acknowledgements}
The author would like to thank Francesco Maggi for his invaluable guidance on this project, and Kenneth DeMason for many helpful discussions.
This work was supported by NSF Grant DMS-2247544 and by NSF RTG Grant DMS-1840314.

\section*{AI Statement}
The author used an AI assistant (Anthropic's Claude) to check the manuscript for errors and to verify references; all mathematical content is the author's own.
\addtocontents{toc}{\protect\setcounter{tocdepth}{1}}


\section{Cup, cone and slab competitors}\label{sec:competitors} 
We divide this section into several subsections. The first is dedicated to translating the compactness results for clusters from Chapter 29 of \cite{Maggi_Book} into the wet setting. The second is dedicated to proving some basic lemmas about clusters and sets of finite perimeter which will be useful in the process of constructing carefully chosen competitors to a given minimizing sequence. These competitors are the cup competitors, cone competitors and slab competitors seen in \cite{King_2022}. The final three are dedicated to ensuring these competitors are well defined in the setting of wet isoperimetric clusters, and demonstrating why they are useful in this setting. 

\subsection{Compactness Tools.}
In order to apply compactness results to a sequence of wet clusters, we require an adapted version of the truncation lemma from \cite{Maggi_Book} Section 29.4, which looks as follows: Here and below, $d(\E,\F) = \sum_{h=1}^{N+1}|\E(h)\Delta\F(h)|$ denotes the $L^1$-distance between two $(N+1)$-clusters, and, for a set $A$ contained in a $C^1$ hypersurface $M$ with a chosen unit normal $\nu_M$, $N_\de(A) = \{y + t\nu_M(y) : y\in A,\ 0<t<\de\}$ denotes the one-sided normal neighborhood of $A$ of width $\de$ (in Remark \ref{wet truncated cluster remark} we take $M = \{u = r_0\}$ with $\nu_M = \nabla u$, and in Section \ref{subsec:cup} we take $M = \pa B_r(x)$ with $\nu_M$ the inner normal).
\begin{lemma}\label{truncation lemma}
    Let $F$ be a closed set, $u(x) = {\rm dist}(F,x)$, and $\E$ a wet $N$-cluster satisfying 
    \begin{equation}
        \sum_{h=1}^{N+1} \big|\E(h)\setminus F\big| \leq \a.
    \end{equation}
    If we define the set
    \begin{equation*}
        H = \Big\{r\in[0,11(n+1)\a^\frac{1}{n+1}] :\H^{n-1}\big(\bigcup_{h=1}^{N+1}\pas\E(h)\cap \{u =r\}\big)<\infty\Big\},
    \end{equation*}
    then there exists $r_0\in H$ such that the $(N+1)$-cluster $\E'$ defined by 
    \begin{align*}
        \E'(h) = \E(h) \cap\{u\leq r_0\}, && 1\leq h\leq N+1
    \end{align*} satisfies
    \begin{equation}\label{truncation estimates}
        P_{\rm wet}(\E') + 2\sum_{h=1}^N \h\big(\E(h)\cap\{u=r_0\}\big) \leq P_{\rm wet}(\E) - \frac{d(\E,\E')}{4\alpha^{\frac{1}{n+1}}}.
    \end{equation}
\end{lemma}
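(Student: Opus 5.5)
The plan is to follow the proof of the truncation lemma in \cite[Section 29.4]{Maggi_Book}, keeping track of the extra term $2\sum_{h\le N}\h(\E(h)\cap\{u=r\})$. Set $G=\bigcup_{h=1}^{N+1}\E(h)$ and
\[
m(r)=\sum_{h=1}^{N+1}|\E(h)\cap\{u>r\}|=|G\cap\{u>r\}|\,,
\]
so that $m(0)\le\a$ and $d(\E,\E'_r)=m(r)$, where $\E'_r(h)=\E(h)\cap\{u\le r\}$. Since $u$ is $1$-Lipschitz with $|\nabla u|=1$ a.e. off $F$, the coarea formula shows that $m$ is absolutely continuous and nonincreasing, with
\[
-m'(r)=\sum_{h=1}^{N+1}\h(\E(h)\cap\{u=r\})\quad\text{for a.e. } r\,.
\]
The coarea formula also shows that $H$ has full measure in $[0,11(n+1)\a^{1/(n+1)}]$. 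For a.e. $r$ we have the usual slicing identities
\[
P(\E(N+1)\cap\{u\le r\})\le P(\E(N+1);\{u<r\})+\h(\E(N+1)\cap\{u=r\})\,,
\]
\[
P(G\cap\{u>r\})=P(G;\{u>r\})+\h(G\cap\{u=r\})\,.
\]

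The wet structure enters in one step: bounding $P(G;\{u>r\})$ by $P(\E(N+1);\{u>r\})$. Since $\pas G\hnsub\pas\E(0)$, it suffices to show that $\pas\E(0)\hnsub\pas\E(N+1)$. At $\h$-a.e. point of $\pas\E(0)$ exactly two chambers have density $1/2$. If the second chamber were an air chamber $\E(h)$ with $1\le h\le N$, that point would also lie in $\pas\E(N+1)$ by the wetness condition, which would give three chambers of density $1/2$ at one point, which is impossible. Hence $P(G;\{u>r\})\le P(\E(N+1);\{u>r\})$. Combining this with the Euclidean isoperimetric inequality $c_n m(r)^{n/(n+1)}\le P(G\cap\{u>r\})$ gives, for a.e. $r$,
\[
P_{\rm wet}(\E)-\Big[P_{\rm wet}(\E'_r)+2\sum_{h=1}^N\h(\E(h)\cap\{u=r\})\Big]\ \ge\ P(\E(N+1);\{u>r\})-2\sum_{h=1}^{N+1}\h(\E(h)\cap\{u=r\})\ \ge\ c_n m(r)^{\frac{n}{n+1}}+3m'(r)\,.
\]
Here the factor $2$ in front of $\h(\E(N+1)\cap\{u=r\})$ is wasteful but harmless. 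I also use that $P(\E(N+1);\{u>r\})$ plus $P(\E(N+1);\{u<r\})$ is at most $P(\E(N+1))$ for a.e. $r$.

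Finally, argue by contradiction. If $m(r_*)=0$ for some $r_*\in H$, that $r_*$ works trivially, so assume $m>0$ on the interval. Suppose that for a.e. $r\in[0,11(n+1)\a^{1/(n+1)}]$ the conclusion fails, that is,
\[
c_n m^{n/(n+1)}+3m'<\frac{m}{4\a^{1/(n+1)}}\,.
\]
Since $m\le\a$, the right-hand side is at most $\tfrac14 m^{n/(n+1)}$. Hence $-m'\ge c\,m^{n/(n+1)}$, that is, $-(m^{1/(n+1)})'\ge c/(n+1)$ a.e. on the interval. Integrating from $m(0)^{1/(n+1)}\le\a^{1/(n+1)}$ forces $m$ to vanish before $r=11(n+1)\a^{1/(n+1)}$, which is a contradiction. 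This yields $r_0\in H$ with \eqref{truncation estimates}, using $d(\E,\E')=m(r_0)$.

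The main obstacle I expect is bookkeeping of the constants. The isoperimetric constant $c_n=(n+1)\omega_{n+1}^{1/(n+1)}$ must dominate $1/4$ with enough room, and the factor $3$ on $m'$ must be absorbed by the length $11(n+1)\a^{1/(n+1)}$ of the interval. I would first check these numbers exactly as in Maggi's version, where the analogous factor is $2$ and the constant $11$ is chosen to absorb it. If $3$ is too large, I would tighten the slicing estimate so that $\E(N+1)$ contributes only one copy of $\h(\E(N+1)\cap\{u=r\})$, bringing the factor back to $2$.
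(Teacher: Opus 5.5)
Your proposal is correct and is essentially the paper's argument: coarea for $m$, slicing at $\{u=r\}$, the isoperimetric inequality, and integration of $(m^{1/(n+1)})'$ over $[0,11(n+1)\a^{1/(n+1)}]$. The only difference is cosmetic: you use wetness as $\pas\E(0)\hnsub\pas\E(N+1)$ and apply the isoperimetric inequality once to $G\cap\{u>r\}$, while the paper uses $\frac12\sum_{h=1}^{N+1}P(\E(h);\{u>r\})\le P(\E(N+1);\{u>r\})$ and applies it chamber by chamber. Your worry about the factor $3$ is unfounded. Even with only $P(E)\ge|E|^{n/(n+1)}$, your inequality gives $-(m^{1/(n+1)})'>\frac{1}{4(n+1)}$ a.e. Integrating over an interval of length $11(n+1)\a^{1/(n+1)}$ then gives a drop larger than $\frac{11}{4}\a^{1/(n+1)}>m(0)^{1/(n+1)}$, so no tightening is needed.
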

\begin{proof}
    Set $c_\a = 11(n+1)\a^{\frac{1}{n+1}}$. In the event that $\E(h)\subset\{u\le c_\a\}$ for every $h$, we set $r_0 = c_\a$. Otherwise, consider the function 
    \begin{equation*}
        m(r) = \sum_{h=1}^{N+1} |\E(h) \cap\{u>r\}|
    \end{equation*}
    and the truncated cluster $$\E^r(h) = \E(h)\cap\{u\le r\}.$$
    The coarea formula will imply that
    \begin{equation*}
        m(r) = \sum_{h=1}^{N+1} \int_r^\infty \h\big(\E(h)\cap\{u=s\}\big)ds,
    \end{equation*}
    so for almost every $r$ it holds that
    \begin{equation*}
        m'(r) = -\sum_{h=1}^{N+1}\h\big(\E(h)\cap\{u=r\}\big).
    \end{equation*}
    As it holds that $H\subset$ supp$(m)$, suppose that for every $r\in H$ it holds that 
    \begin{equation}\label{truncation contradiction first step}
        P\big(\E(N+1)\big) < P\big(\E^r(N+1\big)) + 2\sum_{h=1}^N \h\big(\E(h)\cap\{u=r\}\big)+ \frac{m(r)}{4\alpha^{\frac{1}{n+1}}}
    \end{equation} where $\E^r = \E \cap\{u\le r\}.$
    As $\E\cap\{u<r\} = \E^r\cap\{u<r\}$, \eqref{truncation contradiction first step} will imply that if $r\in H$,
    \begin{equation}\label{truncation contradiction second step}
         P\big(\E(N+1);\{u>r\}\big) < -m'(r) + \sum_{h=1}^N \h\big(\E(h)\cap\{u=r\}\big)+ \frac{m(r)}{4\alpha^{\frac{1}{n+1}}}.
    \end{equation}
    As $\E$ is a wet $N$-cluster, observing that $m(r) \leq \alpha^{\frac{1}{n+1}}m(r)^{\frac{n}{n+1}}$ and \eqref{truncation contradiction second step} will further imply that
    \begin{equation}\label{truncation contradiction third step}
        \sum_{h=1}^{N+1}\frac{P(\E(h);\{u>r\})}{2} < -2m'(r) + \frac{m(r)^{\frac{n}{n+1}}}{4}.
    \end{equation}
    Next, we use the fact $P(\E(h);\{u>r\}) = P(\E(h)\cap\{u>r\}) - \h(\E(h)\cap\{u=r\})$ to see that \eqref{truncation contradiction third step} implies that
    \begin{equation}\label{truncation contradiction fourth step}
        \sum_{h=1}^{N+1}\frac{P(\E(h)\cap\{u>r\})}{2} < -\frac{5}{2}m'(r) + \frac{m(r)^{\frac{n}{n+1}}}{4}.
    \end{equation}
    Applying the non-sharp isoperimetric inequality to \eqref{truncation contradiction fourth step} will yield
    \begin{equation*}
        \frac{m(r)^\frac{n}{n+1}}{2} < -\frac{5}{2}m'(r) + \frac{m(r)^\frac{n}{n+1}}{4}
    \end{equation*} or equivalently,
   \begin{equation}\label{truncation contradiction final step}
       (m(r)^\frac{1}{n+1})' < -\frac{1}{10(n+1)}.
   \end{equation}
   Integrating \eqref{truncation contradiction final step} over $H$ will yield
   \begin{equation*}
       m(c_{\a})^{\frac{1}{n+1}} - m(0)^\frac{1}{n+1} < -\frac{c_\a}{10(n+1)} = -\frac{11}{10}\a^{\frac{1}{n+1}}.
   \end{equation*}
   As $m\geq 0$, we have that 
   \begin{equation*}
       m(0)^{\frac{1}{n+1}} > \frac{11}{10}\a^{\frac{1}{n+1}} > \alpha^\frac{1}{n+1} \ge m(0)^\frac{1}{n+1},
   \end{equation*}
    a contradiction.
\end{proof}
\begin{remark}\label{wet truncated cluster remark}
    We can further `cap off' the exposed regions from truncation as in the case of the cup competitor. Indeed, we can define the cluster
    \begin{align*}
        \F(h) &= \E^{r_0}(h) &{\rm for}\,\, 0\le h \le N\\
        \F(N+1) &= \E^{r_0}(N+1) \cup N_{\delta}\Big(\bigcup_{h=1}^N \E(h) \cap\{u = r_0\}\Big)
    \end{align*}
    to get a wet $N$-cluster satisfying
    \begin{align}\label{wet truncated cluster estimates}
        P(\F(N+1)) &\le (2 + O(\delta))\h\Big(\bigcup_{h=1}^N \E(h) \cap\{u = r_0\}\Big) \nonumber\\&+ \delta\H^{n-1}\Big( \pas\E(N+1) \cap\{u = r_0\}\Big) + P(\E^{r_0}(N+1))
    \end{align}
\end{remark}
The generalized minimizers we are interested in working with still have limited compactness results. As such we will require a truncation lemma for them as well. While the proof is the identical strategy, it is distinct enough that we include it as a separate lemma.
\begin{lemma}\label{truncation for wet cluster}
    Let $F$ be a closed set, $u(x) = {\rm dist}(F,x)$, and $\E$ an $(N+1)$-cluster. If it holds that 
    \begin{equation*}
        \sum_{h=1}^{N+1} \big|\E(h)\setminus F\big| \leq \a,
    \end{equation*}
    then there exists $r_0\in [0,13(n+1)\alpha^{\frac{1}{n+1}}]$ such that the $(N+1)$-cluster $\E'$ defined by 
    \begin{align*}
        \E'(h) = \E(h) \cap\{u\leq r_0\}, && 1\leq h\leq N+1
    \end{align*} satisfies
    \begin{equation*}
        P_{\rm wet}^*(\E') \leq P^*_{\rm wet}(\E) - \frac{d(\E,\E')}{4\alpha^{\frac{1}{n+1}}}.
    \end{equation*}
\end{lemma}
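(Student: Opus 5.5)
Following the proof of Lemma \ref{truncation lemma}, I will argue by contradiction on the slicing radius. Set $c_\a = 13(n+1)\a^{\frac{1}{n+1}}$. If $\E(h)\subset\{u\le c_\a\}$ for all $h$, take $r_0=c_\a$. Otherwise define
\[
m(r)=\sum_{h=1}^{N+1}|\E(h)\cap\{u>r\}|,\qquad \E^r(h)=\E(h)\cap\{u\le r\}.
\]
By the coarea formula, $m'(r)=-\sum_{h=1}^{N+1}\h(\E(h)\cap\{u=r\})$ for a.e.\ $r$. Since $d(\E,\E^r)=m(r)$, it suffices to find $r$ with $P^*_{\rm wet}(\E^r)\le P^*_{\rm wet}(\E)-m(r)/(4\a^{\frac{1}{n+1}})$. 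So assume instead that for a.e.\ $r\in[0,c_\a]$
\[
P^*_{\rm wet}(\E)<P^*_{\rm wet}(\E^r)+\frac{m(r)}{4\a^{\frac1{n+1}}}.
\]

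Now compute both sides with the form \eqref{wet energy equivalent form}, $P^*_{\rm wet}=\sum_{h=0}^N P(\E(h))$. The key point is that, unlike the wet case, there is no containment $\pas\E(h)\hnsub\pas\E(N+1)$ to exploit. Instead I use the symmetric form $\sum_{h=0}^{N+1}P(\E(h))=P^*_{\rm wet}+P(\E(N+1))$. Since $P(\E(N+1))\le\sum_{h=0}^N P(\E(h))$, we get
\[
\tfrac12\sum_{h=0}^{N+1}P(\E(h))\le P^*_{\rm wet}\le\sum_{h=0}^{N+1}P(\E(h)).
\]
For a.e.\ $r$, truncating each chamber $h=1,\dots,N+1$ gives
\[
P(\E^r(h))=P(\E(h);\{u<r\})+\h(\E(h)\cap\{u=r\}).
\]
The exterior $\E^r(0)=\E(0)\cup\bigcup_{h\ge1}(\E(h)\cap\{u>r\})$ gains at most $\sum_{h\ge1}\h(\E(h)\cap\{u=r\})=-m'(r)$ on $\{u=r\}$ and loses all interfaces in $\{u>r\}$. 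Hence
\[
P^*_{\rm wet}(\E^r)\le P^*_{\rm wet}(\E;\{u<r\})-2m'(r),
\]
where $P^*_{\rm wet}(\E;A)$ denotes the same sum of perimeters localized to $A$.

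Inserting this into the contradiction assumption gives
\[
\tfrac12\sum_{h=1}^{N+1}P(\E(h);\{u>r\})\le P^*_{\rm wet}(\E;\{u>r\})<-2m'(r)+\frac{m(r)}{4\a^{\frac1{n+1}}}.
\]
The lower bound $\frac12\sum_{h=0}^{N+1}$ restricted to chambers $1,\dots,N+1$ in $\{u>r\}$ is the step needing care; it is where the constant worsens from 11 to 13. I would bound the interfaces with $\E(0)$ directly, since each $P(\E(h);\{u>r\})$ for $h\ge1$ is at most the sum of interfaces counted in $P^*_{\rm wet}$ with weight at most 2. Then use $m(r)\le\a$, which gives $m(r)/\a^{\frac1{n+1}}\le m(r)^{\frac n{n+1}}$. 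Adding back the slice $P(\E(h)\cap\{u>r\})=P(\E(h);\{u>r\})+\h(\E(h)\cap\{u=r\})$ costs another $-m'(r)/2$. Finally, apply the non-sharp isoperimetric inequality $\sum_h P(\E(h)\cap\{u>r\})\ge m(r)^{\frac n{n+1}}$.

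This yields a differential inequality of the form $(m^{\frac1{n+1}})'<-\frac{1}{c(n+1)}$ with $c\le 12$, say. Integrating over $[0,c_\a]$ gives $m(0)^{\frac1{n+1}}>\frac{13}{c}\a^{\frac1{n+1}}>\a^{\frac1{n+1}}\ge m(0)^{\frac1{n+1}}$, a contradiction. The main obstacle is the bookkeeping of the exterior chamber's new boundary on $\{u=r\}$: it is counted twice in $P^*_{\rm wet}$, and I must verify that the resulting constants close with $13$.
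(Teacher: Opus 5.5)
Your proposal is correct and follows the paper's argument essentially step for step. You slice at $\{u=r\}$, bound the drop $\ps(\E)-\ps(\E^r)$ from below by $\frac12\sum_{h=1}^{N+1}P(\E(h);\{u>r\})$ plus a multiple of $m'(r)$, add back the slices, apply the non-sharp isoperimetric inequality, and integrate the resulting differential inequality for $m^{1/(n+1)}$. Your bookkeeping in fact yields $m^{n/(n+1)}<-10\,m'(r)$, while the paper uses the cruder term $3m'(r)$ and gets $-12\,m'(r)$. So the constant $13$ has room to spare, and nothing actually ``worsens from 11 to 13'' at the step you flag.
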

\begin{proof}
    In the event that $\E(h)\subset \{u \le 13(n+1)\alpha^{\frac{1}{n+1}}\}$, we set $r_0 = 13(n+1)\alpha^{\frac{1}{n+1}}$. Otherwise, suppose for the sake of contradiction that for all $r \in [0,13(n+1)\alpha^{\frac{1}{n+1}}]$,
    \begin{equation*}
        \ps(\E) < \ps(\E^r) + \frac{m(r)}{4\alpha^{\frac{1}{n+1}}},
    \end{equation*}
    where $m(r) = d(\E,\E^r)$ and $\E^r(h)$ is the truncated cluster exactly as in the proof of Lemma \ref{truncation lemma} (with $\E^r(0) = \rn\setminus\bigcup_{h=1}^{N+1}\E^r(h)$, according to our convention). 
    This would imply that
    \begin{equation}\label{gen trunc contradiction hyp}
        \sum_{h=0}^{N} P(\E(h)) - P(\E^r(h))< \frac{m(r)}{4\alpha^{\frac{1}{n+1}}}.
    \end{equation}
    The coarea formula and the fact that clusters are Caccioppoli partitions of $\R^{n+1}$ will imply that 
    \begin{align}\nonumber
        \sum_{h=0}^{N} P(\E(h)) - P(\E^r(h)) &= \sum_{h=0}^N P(\E(h);\{u > r\}) - 2\sum_{h=1}^N \h(\E(h)\cap \{u=r\})\\\nonumber
        &\ge \frac{1}{2}\sum_{h=1}^{N+1} P(\E(h);\{u > r\})  - 2m'(r)\\\nonumber
        &= \frac{1}{2}\sum_{h=1}^{N+1} P(\E(h)\cap\{u > r\}) + 3m'(r)\\\label{gen trunc pre iso}
        &\geq \frac{1}{2}P(\bigcup_{h=1}^{N+1}\E(h)\cap\{u>r\}) + 3m'(r).
    \end{align}
    Applying the non sharp isoperimetric inequality to \eqref{gen trunc pre iso}
    \begin{equation*}
        \sum_{h=0}^{N} P(\E(h)) - P(\E^r(h)) \geq \frac{1}{2}m(r)^\frac{n}{n+1} + 3m'(r).
    \end{equation*}
    Plugging this back into \eqref{gen trunc contradiction hyp} and using the fact that $m$ is decreasing with $m(0) \leq \alpha$ we observe that
    \begin{equation*}
        \frac{1}{2}m(r)^\frac{n}{n+1} + 3m'(r) < \frac{1}{4}m(r)^\frac{n}{n+1}
    \end{equation*}
    or equivalently,
    \begin{equation*}
        m(r)^\frac{n}{n+1} <  -12m'(r)
    \end{equation*}
    This implies that 
    \begin{equation*}
        (m(r)^\frac{1}{n+1})' < -\frac{1}{12(n+1)}.
    \end{equation*}
    Integrating this inequality over $[0,13(n+1)\alpha^{\frac{1}{n+1}}]$ will show that
    \begin{equation*}
        m(13(n+1)\alpha^{\frac{1}{n+1}})^\frac{1}{n+1} - m(0)^\frac{1}{n+1} < -\frac{13}{12}\a^\frac{1}{n+1}.
    \end{equation*}
    Again using the monotonicity of $m$, we see by the above
    \begin{equation*}
        m(0)^\frac{1}{n+1}\leq \a^\frac{1}{n+1} < \frac{13}{12}\a^\frac{1}{n+1} < m(0)^\frac{1}{n+1}.
    \end{equation*}
    This is a contradiction, and thus we conclude the proof of the lemma.
\end{proof}

Finally, we state the nucleation lemma as seen in \cite{Maggi_Book} Section 29.3. This is also necessary for all compactness results.
\begin{lemma}\label{nucleation lemma}
    If $E$ is a set of finite perimeter with $0 < |E| < \infty$ and 
    \begin{equation*}
        \e \le \min\bigg\{|E|,\frac{P(E)}{2(n+1)c(n)}\bigg\},
    \end{equation*}
    then there exists a finite family of points $I\subset\R^{n+1}$ such that 
    \begin{align*}
        \Big|E\setminus \bigcup_{x\in I} &B_2(x) \Big| < \e,\\
        \Big|E\cap B_1(x)\Big|&\ge \Big(c(n)\frac{\e}{P(E)}\Big)^{n+1}, \,\,\,\,\,\,\,\forall x\in I.
    \end{align*}
    Moreover, $|x-y| >2$ for $x,y\in I$ and $x\neq y$, and
    \begin{equation*}
        \#I \leq |E|\Big(\frac{P(E)}{\e c(n)}\Big)^{n+1}.
    \end{equation*}
\end{lemma}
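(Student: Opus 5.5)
The nucleation lemma is quoted from \cite{Maggi_Book} Section 29.3. I would reproduce the covering argument there, which rests on the relative isoperimetric inequality in balls and a Vitali-type selection. Let $c(n)$ be a small dimensional constant, fixed at the end. Put $\eta = (c(n)\e/P(E))^{n+1}$ and define the \emph{bad set}
\[
B=\big\{x\in\R^{n+1}: |E\cap B_1(x)|<\eta\big\}.
\]
The plan has three steps: show that $E\cap B$ carries volume less than $\e$; select a maximal $2$-separated family $I$ of points outside $B$; and count $I$ using the volume lower bound.

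\textbf{Step 1.} Cover $\R^{n+1}$ by the unit cubes $Q$ of the lattice $\mathbb{Z}^{n+1}$ scaled by $(n+1)^{-1/2}$, so that each cube has diameter at most $1$ and lies in $B_1(x)$ for any $x\in Q$. Consider a cube $Q$ that meets $B$. Then $|E\cap Q|<\eta$, which is small compared with $|Q|$ once $c(n)$ is small, since the hypothesis $\e\le|E|$ gives $\e/P(E)\le |E|/P(E)$. The relative isoperimetric inequality in the cube then yields
\[
|E\cap Q|^{n/(n+1)}\le C(n)\,P(E;Q),
\]
and hence
\[
|E\cap Q|=|E\cap Q|^{1/(n+1)}\,|E\cap Q|^{n/(n+1)}\le \eta^{1/(n+1)}C(n)P(E;Q)=C(n)c(n)\frac{\e}{P(E)}P(E;Q).
\]
Summing over the cubes meeting $B$ gives a volume at most $C(n)c(n)\e<\e$ once $c(n)$ is chosen small. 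The condition $\e\le P(E)/(2(n+1)c(n))$ is what makes $\eta$ small relative to the cube volume, so that the relative isoperimetric inequality applies on the small-volume side.

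\textbf{Step 2.} Let $G$ be the union of the cubes that do not meet $B$; every point of such a cube satisfies $|E\cap B_1(x)|\ge\eta$. By Step 1, $|E\setminus G|<\e$. Choose a maximal family $I\subset G\cap\{|E\cap B_1(\cdot)|\ge\eta\}$ with pairwise distances greater than $2$. It is finite, because the balls $B_1(x)$ for $x\in I$ are disjoint and each contains volume at least $\eta$ of $E$. This disjointness also gives
\[
\#I\,\eta\le|E|,
\]
which is exactly the stated bound $\#I\le |E|(P(E)/(\e c(n)))^{n+1}$. By maximality, every point of $G\cap\{|E\cap B_1(\cdot)|\ge\eta\}$ lies within distance $2$ of $I$, so $G\subset\bigcup_{x\in I}\overline{B_2(x)}$ up to the boundary issue below. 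Therefore $|E\setminus\bigcup_{x\in I}B_2(x)|\le|E\setminus G|<\e$. Maximality must be taken with respect to separation $\ge2$ versus $>2$ carefully, but spheres are null sets, so this causes no trouble.

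\textbf{Main obstacle.} The delicate step is Step 1: the constants must be arranged so that the relative isoperimetric inequality applies on the small-volume side in every cube. That is precisely where the upper bound on $\e$ in terms of $P(E)/(2(n+1)c(n))$ is consumed. I would follow Maggi's bookkeeping, adjusting $c(n)$ if necessary.
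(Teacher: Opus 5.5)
Your argument is correct and is essentially the standard proof from \cite{Maggi_Book} Section 29.3, which is all the paper gives for this lemma: apply the relative isoperimetric inequality on the low-density cubes of diameter $1$, then take a maximal $2$-separated family of good points, whose disjoint unit balls give both finiteness and the bound on $\#I$. One small slip is worth fixing: the smallness $\eta\le|Q|/2$ follows only from $\e\le P(E)/(2(n+1)c(n))$, which gives $c(n)\e/P(E)\le 1/(2(n+1))$ regardless of the value of $c(n)$; the hypothesis $\e\le|E|$, which your first sentence of Step 1 invokes, plays no role there.
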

\begin{proof}
    See \cite{Maggi_Book} Section 29.3.
\end{proof}

\subsection{Preliminary results.} 
Here, we collect a few lemmas that will be essential for defining and utilizing the competitors defined in the upcoming sections.

For $A\subset\rn$ and $r>0$ we write $A_r = A\cap\pa B_r$, regarded as a subset of the sphere $\pa B_r$; perimeters, reduced boundaries, and the notation $P(A_r)$ for subsets of $\pa B_r$ are always understood relative to $\pa B_r$.
\begin{lemma}\label{Perimeter sections}
If $E\subset\R^{n+1}$ is a set of finite perimeter, then $\pas (E_r) \stackrel{\mathcal{H}^{n-1}}{=} (\pas E)_r$ for almost every $r\in\R$
\end{lemma}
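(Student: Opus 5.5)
The plan is to reduce the statement to the classical slicing theory for sets of finite perimeter, using the radial function $u(x)=|x|$, which is Lipschitz with $|\nabla u|=1$ away from the origin. The known result (Maggi, Section 28 on slicing by hyperplanes, and its extension to level sets of Lipschitz functions, or equivalently the coarea formula on rectifiable sets) says the following. For a.e. $r$, the slice $E_r=E\cap\pa B_r$ is a set of finite perimeter in $\pa B_r$. In addition, $\mu_{E_r}=\nu^{\pa B_r}_{E}\,\H^{n-1}\llcorner(\pas E)_r$, where $\nu^{\pa B_r}_E$ is the projection of $\nu_E$ onto $T\pa B_r$, and $\nu^{\pa B_r}_E\neq 0$ $\H^{n-1}$-a.e. on $(\pas E)_r$.

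To make this precise on spheres, I will work in polar coordinates. The map $\Phi:(0,\infty)\times\SS^n\to\rn\setminus\{0\}$, $\Phi(r,\theta)=r\theta$, is a smooth diffeomorphism on each annulus $\{a<|x|<b\}$ with $0<a<b$, and $\SS^n$ can be covered by finitely many coordinate charts. In such a chart, $\Phi$ turns $E$ into a set of locally finite perimeter in an open subset of $\R\times\R^n$. Slicing along the first coordinate is then exactly hyperplane slicing. I apply the hyperplane slicing theorem \cite[Theorem 18.11 / Remark 18.13]{Maggi_Book} to the pulled-back set. Reduced boundaries and Gauss–Green measures transform naturally under $C^1$ diffeomorphisms, up to $\H^{n}$- and $\H^{n-1}$-null sets. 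Hence for a.e. $r$ in each annulus and each chart, $\pas(E_r)\hneq(\pas E)_r$ relative to that chart. A countable union over annuli $(1/k,k)$ and finitely many charts gives the full-measure set of good $r$.

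The direct alternative proceeds in two steps. First, by the coarea formula on the rectifiable set $\pas E$, the tangential Jacobian of $u$ along $\pas E$ is $\sqrt{1-(\nu_E\cdot x/|x|)^2}$. So the set where $\nu_E=\pm x/|x|$ meets $\pa B_r$ in an $\H^{n-1}$-null set for a.e. $r$. Second, test the distributional derivative of $1_{E}$ against $\varphi(x)\eta(|x|)$ with $\varphi$ tangential. Differentiating in $r$ via Fubini/coarea then identifies $D_{\pa B_r}1_{E_r}$ with the projected normal measure on $(\pas E)_r$ for a.e. $r$. Combining these two facts gives $\mu_{E_r}=\nu^{\pa B_r}_E\H^{n-1}\llcorner(\pas E)_r$ with a nonvanishing density. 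De Giorgi's structure theorem on $\pa B_r$ then shows $\pas(E_r)$ agrees up to null sets with the support of this density, namely $(\pas E)_r$.

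The main obstacle is the last identification. Knowing the Gauss–Green measure of $E_r$ is absolutely continuous on $(\pas E)_r$ does not immediately give equality of reduced boundaries. The inclusion $\pas(E_r)\subset(\pas E)_r$ follows from the measure representation. For the reverse inclusion, one needs that at $\H^{n-1}$-a.e. point of $(\pas E)_r$ the density of $\mu_{E_r}$ has unit norm after normalization. This comes from the blow-up of $E$ at $x\in\pas E$ being a half-space transversal to $\pa B_r$, so its trace on the tangent sphere is a half-space of $T_x\pa B_r$. I expect to handle this by citing the hyperplane result through the chart reduction rather than redoing the blow-up analysis.
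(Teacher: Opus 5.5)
Your main route is correct, but it is not the paper's. You pull $E$ back by $\Phi(t,\theta)=t\psi(\theta)$ on $(a,b)\times V$ to get $G=\Phi^{-1}(E)$, slice $G$ by the hyperplanes $\{t\}\times\R^n$, and transport the conclusion back using diffeomorphism invariance of reduced boundaries. The bookkeeping this needs is:
\begin{itemize}
\item a localization, since $G$ only has locally finite perimeter in an open set;
\item the identification of the intrinsic reduced boundary on $\pa B_t$ with the Euclidean one in the chart;
\item the remark that an $\h$-null discrepancy between $\Phi(\pas G)$ and $\pas E$ has $\H^{n-1}$-null slices for a.e.\ $t$ (Eilenberg's inequality).
\end{itemize}
The paper instead redoes the hyperplane argument intrinsically on spheres, which is your ``direct alternative''. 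It first shows, by mollification and lower semicontinuity, that $\int_0^\infty P(E_r)\,dr\le P(E)$. It then uses the coarea formula with $u=|x|$ to see that $\{\nu_E=\pm x/|x|\}$ has $\H^{n-1}$-null slices. Finally it tests against $S(r\sigma)=\phi(r)T(\sigma)$ to identify $\mu_{E_r}$. Your route is shorter because it outsources the computation to the slicing theorem. The paper's route avoids charts and any discussion of how perimeter on $\pa B_r$ transforms.

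Your direct alternative, however, misidentifies the density, and the ``main obstacle'' you describe does not exist. Write $\nu_E^T$ for the projection of $\nu_E$ onto $T_x\pa B_r$. For tangential $S$ one has $S\cdot\nu_E=S\cdot\nu_E^T$. The coarea factor of $u$ along $\pas E$ is $|\nabla^{\pas E}u|=\sqrt{1-(\nu_E\cdot x/|x|)^2}=|\nu_E^T|$, and it cancels the length of $\nu_E^T$. So $\mu_{E_r}=\frac{\nu_E^T}{|\nu_E^T|}\,\H^{n-1}\llcorner(\pas E)_r$ with \emph{unit} density, exactly as in \eqref{measure for sections} and in the hyperplane theorem. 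The unnormalized formula already fails for $E=\{x_{n+1}>c\}$ with $0<c<r$.

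With the correct density, $|\mu_{E_r}|=\H^{n-1}\llcorner(\pas E)_r$. De Giorgi's theorem on $\pa B_r$ gives $|\mu_{E_r}|=\H^{n-1}\llcorner\pas(E_r)$, so $\pas(E_r)\hneq(\pas E)_r$ follows at once. In fact any a.e.\ nonvanishing density would suffice for comparing these two measures. So no blow-up analysis is needed for the reverse inclusion: your own sentence invoking De Giorgi already finishes the proof.

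Finally, if you prove the measure identity for a countable dense family of $T$, so that one null set of radii serves all test fields, it also gives $P(E_r)\le\H^{n-1}((\pas E)_r)<\infty$ for a.e.\ $r$. That makes the paper's mollification step dispensable.
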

\begin{proof}
\textit{Step One}: First, we show that $E_r$ is a set of finite perimeter in $\pa B_r$.

\medskip
\noindent
Given a sequence of mollifiers $\p_\epsilon$, let $u_\e = 1_E\star\p_\e$ and $u = 1_E$. Then $u_\e \to 1_E$ in $L^1_{\rm loc}(\R^{n+1})$. It follows that, for $R\in(0,\infty)$, we may choose an $\e_h$ such that
\begin{equation}\label{convergence in measure for sofp}
\int_0^R r^{n}\int_{\pa B_1}|u_{\e_h}(r\sigma)-u(r\sigma)|d\h dr < \frac{1}{h^2}.
\end{equation}
Applying Chebyshev's inequality to \eqref{convergence in measure for sofp} implies that 
\begin{equation*}
r^{n}\int_{\pa B_1}|u_{\e_h}(r\sigma)-u(r\sigma)|d\h \to 0 \text{ in measure}.
\end{equation*}
Moreover, up to a subsequence we have that 
\begin{equation}\label{l1 convergence of sections}
    r^{n}\int_{\pa B_1}|u_{\e_h}(r\sigma)-u(r\sigma)|d\h \to 0 \text{ for almost every } r\in(0,R),
\end{equation}
implying that for almost every $r\in\R^+$ that $u_{\e_h} \to u$ in $L^1(\pa B_r)$. 

\noindent
Now choose $T \in \X(\pa B_r)$ such that $|T|\leq 1$. Then by \eqref{l1 convergence of sections} we have 
\begin{align}\label{section is fp}
|\!\int_{E_r} {\rm div}^{\pa B_r}\, T(r\sigma)| &= \lim_{h\to\infty}|\int_{\pa B_r} u_{\e_h}(r\sigma){\rm div}^{\pa B_r}\,T(r\sigma)|&\nonumber\\
&=\lim_{h\to\infty}|\int_{\pa B_r}{\rm div}^{\pa B_r}\,u_{\e_h}(r\sigma)T(r\sigma) - \nabla^{\pa B_r} u_{\e_h}(r\sigma)\cdot T(r\sigma)|.&
\end{align}
As $u_{\e_h}T\in\X(\pa B_r)$ and $\pa B_r$ is boundaryless, we further see that
\begin{equation*}
\int_{\pa B_r}{\rm div}^{\pa B_r}\,u_{\e_h}(r\sigma)T(r\sigma) = \int_{\pa B_r}u_{\e_h}(r\sigma)T(r\sigma)\cdot\textbf{H}^{\pa B_r} = 0.
\end{equation*}
Combining this with \eqref{section is fp} and $|T|\le1$, we have  \begin{equation}\label{d-1 perimeter bound}
\sup|\!\int_{E_r} {\rm div}^{\pa B_r}\, T(r\sigma)| \leq \liminf_{h\to\infty} \int_{\pa B_r} |\nabla^{\pa B_r} u_{\e_h}|.
\end{equation}
Finally, integrating \eqref{d-1 perimeter bound} and applying the coarea formula yields
\begin{equation*}
\int_0^\infty \sup|\!\int_{E_r} {\rm div}^{\pa B_r}\, T(r\sigma)| \leq \int_0^\infty P(E_r) \leq P(E) < \infty,
\end{equation*}
implying that $E_r$ is finite perimeter for almost every $r$.

\medskip

\noindent \textit{Step Two}: We now show that $\pas (E_r) \stackrel{\mathcal{H}^{n-1}}{=} (\pas E)_r$.

\medskip\noindent Let $u(x) = |x|$ and $\S = \{x \in \pas E: \nu_E(x) = \pm\nabla u\}.$ By the coarea formula, we have that 
\begin{equation*}
\int_\R \H^{n-1}(\S_r) \,dr= \int_\R\,dr\int_{\S\cap(\pas E)_r} = \int_{\pas E} 1_{\S}\sqrt{1 - (\nu_E\cdot\nabla u)^2} = 0,
\end{equation*}
implying that for almost every $r$, $\H^{n-1}(\Sigma_r)=0$. Now if we define $M = \pas E \setminus \S$, it suffices to show that for almost every $r\in\R$,
\begin{equation}\label{measure for sections}
\mu_{E_r} = \frac{(\nabla^{\pas E} u)\nu_E(x)}{|(\nabla^{\pas E} u)\nu_E(x)|}\H^{n-1}\llcorner(M_r).
\end{equation}
To this end, consider $T \in \X(\pa B_r)$ and $\phi \in C^1_c(\R)$. Then define $S \in C^1_c(\R^{n+1},\R^{n+1})$ by 
\begin{equation*}
    S(r\sigma) = \phi(r)T(\sigma).
\end{equation*}
By the divergence theorem for rectifiable sets, we see that
\begin{align}\label{divergence thm for sections}
\int_0^\infty \phi(r) \int_{E_r} r^n{\rm div}^{\pa B_r}\,T d\H^{n} dr = \int_E {\rm div} \,S \,d\H^{n+1} = \int_{\pas E} S\cdot\nu_E(x)d\h\nonumber\\
= \int_{M}S\cdot(\nabla^{\pas E} u)\nu_E(x) = \int_0^\infty \phi(r) \int_{M_r} r^nT\cdot\frac{(\nabla^{\pas E} u)\nu_E(x)}{|(\nabla^{\pas E} u)\nu_E(x)|} d\H^{n} dr.
\end{align}
Immediately, \eqref{measure for sections} follows from \eqref{divergence thm for sections}. 
\end{proof}

\begin{remark}
    We briefly recall the notion of essential connectedness. A set $S$ essentially disconnects a set $T$ if there exists a nontrivial Borel partition $\{T_1, T_2\}$ such that
    $$T^{(1)}\cap\pa^eT_1 \cap \pa^eT_2\hnsub S.$$
    
\end{remark}

\begin{lemma}\label{essential partitions}
If $S \subset \SS^{n}$ satisfies $\H^{n-1}(S) < \infty$, then there exists an essential partition $\{A_i\}$ of $\SS^n\setminus S$ induced by $S$ such that 
    \begin{equation}\label{partition perimeter}
    \sum_{i} P(A_i;\SS^n) \leq 2\H^{n-1}(S)\,.
    \end{equation}
\end{lemma}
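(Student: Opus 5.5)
The plan is to follow the classical construction of the partition of a set by its essential connected components (Federer's indecomposable components), applied on the sphere $\SS^n$ in place of $\R^{n+1}$. Since $\SS^n$ has finite measure, the set $\SS^n\setminus S$ has finite measure. To make the components sets of finite perimeter, I will not partition $\SS^n\setminus S$ directly. Instead I will consider the class $\mathcal{C}$ of countable Borel partitions $\{A_i\}$ of $\SS^n$ (up to null sets) with
\[
\bigcup_i \pas A_i \hnsub S
\]
(reduced boundaries relative to $\SS^n$, in the sense of the convention just fixed before Lemma \ref{Perimeter sections}). This class is nonempty, since the trivial partition $\{\SS^n\}$ belongs to it. For any such partition, $\H^{n-1}$-a.e. point of $\bigcup_i\pas A_i$ belongs to exactly two of the reduced boundaries, by the structure theory of Caccioppoli partitions (the same fact used in \eqref{dry perimeter}). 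Hence
\[
\sum_i P(A_i;\SS^n)=2\,\H^{n-1}\Big(\bigcup_i\pas A_i\Big)\le 2\H^{n-1}(S),
\]
which is exactly \eqref{partition perimeter}. The content of the lemma is therefore the existence of a \emph{maximal} (finest) such partition, which is then the essential partition induced by $S$: no $A_i$ is essentially disconnected by $S$.

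To produce the maximal element, I will take a partition in $\mathcal{C}$ maximizing, or nearly maximizing, a functional such as $\sum_i |A_i|^{1/2}$ (more generally $\sum_i\phi(|A_i|)$ with $\phi$ strictly concave and $\phi(0)=0$), which strictly increases under refinement. Compactness comes from the uniform perimeter bound $2\H^{n-1}(S)$ together with the compactness theorem for sets of finite perimeter on the compact manifold $\SS^n$. Along a maximizing sequence I will order the pieces by decreasing volume and extract a diagonal subsequence in which each $A_i^k\to A_i$ in $L^1$.

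The main obstacle is showing that the limit is still a partition of full measure, i.e. that no volume escapes into infinitely many vanishing pieces. The tool I would use is the relative isoperimetric inequality on $\SS^n$: for small pieces, $|A|^{n/(n)}$ type bounds of the form $\min\{|A|,|\SS^n|-|A|\}^{(n-1)/n}\le C\,P(A;\SS^n)$ give $\sum_{i>M}|A_i^k|\le C\,(\max_{i>M}|A_i^k|)^{1/n}\,\H^{n-1}(S)$, which is uniformly small. The limit condition $\pas A_i\hnsub S$ is the second delicate point, since reduced boundaries are not stable under $L^1$ convergence. I would handle it by lower semicontinuity localized to open sets $U$:
\[
P(A_i;U)\le\liminf_k P(A_i^k;U)\le\liminf_k 2\,\H^{n-1}(S\cap U)\ \text{(summed over $i$)}.
\]
This yields $\sum_i \H^{n-1}\llcorner\pas A_i\le 2\,\H^{n-1}\llcorner S$ as measures, and hence $\pas A_i\hnsub S$.

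Finally, I will show maximality by contradiction. If some $A_i$ were essentially disconnected by $S$ into $T_1,T_2$, then $\pas T_1\cup\pas T_2\hnsub \pas A_i\cup S\hnsub S$, so replacing $A_i$ by $T_1,T_2$ keeps the partition in $\mathcal{C}$. By strict concavity this strictly increases the functional, contradicting maximality. Thus $\{A_i\}$ is the essential partition of $\SS^n\setminus S$ induced by $S$, and \eqref{partition perimeter} holds by the counting argument above.
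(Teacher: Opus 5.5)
The paper gives no argument of its own for this lemma; it transplants Theorem 2.1 of the cited Plateau-borders paper to the sphere. Your direct-method strategy is a reasonable self-contained substitute: work over the class $\mathcal{C}$ of partitions whose reduced boundaries lie in $S$, and rule out essential disconnection with a strictly subadditive volume functional. However, the maximization step has a genuine gap. Your final contradiction needs $\sup_{\mathcal{C}}\Phi<\infty$, and it needs the limit partition to attain that supremum, i.e. $\Phi(\{A_i\})\ge\limsup_k\Phi(\{A_i^k\})$. You verify neither. Termwise convergence $|A_i^k|\to|A_i|$ gives this only if the tails $\sum_{i>M}\phi(|A_i^k|)$ are small uniformly in $k$. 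Your tail estimate controls $\sum_{i>M}|A_i^k|$, which is a different quantity. The natural control on $\phi$-tails comes from $\sum_{i\ge2}|A_i^k|^{(n-1)/n}\le C\,\H^{n-1}(S)$, so you need $\phi(t)=O(t^\alpha)$ as $t\to0$ for some $\alpha>(n-1)/n$.

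Your explicit choice $\phi(t)=t^{1/2}$ violates this. For $n\ge3$, let $S=\bigcup_j\pa B_{r_j}(x_j)$ for disjoint geodesic balls with $\sum_j r_j^{n-1}<\infty=\sum_j r_j^{n/2}$. Then $\H^{n-1}(S)<\infty$ but $\sup_{\mathcal{C}}\Phi=+\infty$. Moreover, a partition of infinite value need not be the finest one (add to $S$ a great sphere, disjoint from the balls, that cuts their complement), and splitting it produces no contradiction. For $n=2$ the two exponents coincide, so the tails are bounded but not uniformly small, and nothing in your argument shows that the limit attains the supremum.

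The repair is to tie $\phi$ to the isoperimetric exponent. One option is $\phi(t)=t^\alpha$ with $\alpha\in\big(\tfrac{n-1}{n},1\big)$. Then your ordering gives $|A_i^k|\le|\SS^n|/i$, hence $\sum_{i>M}\phi(|A_i^k|)\le C\,(|\SS^n|/M)^{\alpha-(n-1)/n}\,\H^{n-1}(S)$. A simpler option is $\phi(t)=t-t^2/(2|\SS^n|)$, i.e. minimize $\sum_i|A_i|^2$. This $\phi$ is strictly subadditive and satisfies $0\le\phi(t)\le t$, so its tails are dominated by the volume tails you already estimate.

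With either choice the limit is a genuine maximizer, and the rest of your argument goes through:
\begin{enumerate}
\item The counting bound holds; you only need that a point lies on at most two reduced boundaries of a disjoint family, and this also shows every element of $\mathcal{C}$ is a Caccioppoli partition.
\item The closure of $\pas A_i\hnsub S$ follows by localized lower semicontinuity.
\item The splitting step works, but you should add that $T_1,T_2$ have finite perimeter by Federer's criterion, since $\pa^eT_j\hnsub\pa^eA_i\cup(A_i^{(1)}\cap\pa^eT_1\cap\pa^eT_2)\hnsub S$.
\end{enumerate}
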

\noindent
The proof is a repetition of the argument given for Theorem 2.1 of \cite{maggi2023plateauborderssoapfilms}, with the only change being the ambient space that it is done in. As this changes nothing about the argument, we do not repeat it here.


\begin{lemma}\label{spherical isoperimetry}
    Let $n\geq 2$. Given a spherical cap $S$ and $J\subset S$ with $\H^{n-1}(J) < \infty$, let $\{A_h\}_{h\ge1}$ denote the essentially connected components of $ S\setminus J$ from Lemma \ref{essential partitions}, ordered so that $\h(A_h)\geq \h(A_{h+1})$. Then for every $\alpha\subset\N\setminus\{1\}$ it holds that
    $$\h\Big(S\setminus \big(A_1 \cup \bigcup_{h\in\alpha} A_h\big)\Big) \leq C_S\H^{n-1}(J)^{\frac{n}{n-1}}.$$
\end{lemma}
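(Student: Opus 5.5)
The plan is to derive the estimate from a relative isoperimetric inequality on the spherical cap $S$, applied to the union of all essentially connected components other than the largest one. Since $\alpha\subset\N\setminus\{1\}$, the set $S\setminus(A_1\cup\bigcup_{h\in\alpha}A_h)$ is contained (up to $\h$-null sets) in $G=\bigcup_{h\ge2}A_h$. It therefore suffices to prove
\[
\h(G)\le C_S\,\H^{n-1}(J)^{\frac n{n-1}}.
\]

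\emph{Step 1: reduce to the bound above.} By Lemma \ref{essential partitions}, applied in the cap (or in $\SS^n$ after extending $J$ by nothing; the cap is bi-Lipschitz to a ball, so the argument transfers), the components satisfy
\[
\sum_h P(A_h;S)\le 2\H^{n-1}(J).
\]

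\emph{Step 2: a relative isoperimetric inequality on $S$.} On the cap $S$, which is a Lipschitz domain of $\SS^n$, there is a constant $c_S$ such that for every set $E\subset S$ of finite perimeter,
\[
\min\{\h(E),\h(S\setminus E)\}^{\frac{n-1}{n}}\le c_S\,P(E;S).
\]
This holds since $n\ge2$ and a relative isoperimetric inequality on a connected Lipschitz domain follows from the Poincaré–Sobolev inequality for $BV$.

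\emph{Step 3: case split.} First suppose $\h(A_1)\ge \h(S)/2$. Then $\h(G)=\h(S\setminus A_1)\le \h(S)/2$, so the minimum in Step 2 is $\h(G)$. Moreover $P(G;S)\le\sum_{h\ge2}P(A_h;S)\le 2\H^{n-1}(J)$, and raising to the power $\frac n{n-1}$ gives the claim.

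Now suppose $\h(A_1)<\h(S)/2$. Since the $A_h$ are ordered decreasingly, every component has measure below $\h(S)/2$. Applying Step 2 to each $A_h$ gives $\h(A_h)\le (c_SP(A_h;S))^{\frac n{n-1}}$. Summing and using superadditivity of $t\mapsto t^{\frac n{n-1}}$,
\[
\h(S)=\sum_h\h(A_h)\le c_S^{\frac n{n-1}}\Big(\sum_hP(A_h;S)\Big)^{\frac n{n-1}}\le (2c_S\H^{n-1}(J))^{\frac n{n-1}}.
\]
Hence $\h(G)\le\h(S)\le C_S\H^{n-1}(J)^{\frac n{n-1}}$.

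The main obstacle is making sure the partition from Lemma \ref{essential partitions} can be taken relative to the cap $S$, with perimeters measured in $S$, rather than in all of $\SS^n$. I would first handle this by running the Theorem 2.1 argument of \cite{maggi2023plateauborderssoapfilms} directly in the open cap. If that fails, I would use $\SS^n\setminus(J\cup\pa S)$ and note that the components inside $S$ have relative perimeter in $S$ bounded by $2\H^{n-1}(J)$, since the boundary $\pa S$ is not counted in the relative perimeter. The relative isoperimetric constant depends only on the cap, which is what gives $C_S$.
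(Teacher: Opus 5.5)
Your argument is correct and uses the same ingredients as the paper: the relative isoperimetric inequality on $S$, the bound $\sum_h P(A_h;S)\le 2\H^{n-1}(J)$ from Lemma \ref{essential partitions}, and superadditivity of $t\mapsto t^{n/(n-1)}$. The case split is unnecessary, though: since $A_h$ and $A_1$ are disjoint with $\h(A_h)\le\h(A_1)$, every $A_h$ with $h\ge 2$ automatically satisfies $\h(A_h)\le\h(S)/2$, so the paper simply applies the inequality to each such $A_h$ and sums over $h\notin\{1\}\cup\alpha$.
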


\noindent This is Lemma 9 from \cite{delellis2014directapproachplateausproblem} with enough modification that we re-prove it here. 
\begin{proof}
    By the relative isoperimetric inequality, for any $A\subset S$ $$\min\bigg\{\h(S\setminus A), \h(A)\bigg\}\leq C_S\H^{n-1}(\pas A)^\frac{n}{n-1}.$$
    Thus by the ordering property, for $h\geq 2$ $$\h(A_h) \leq C_S\H^{n-1}(\pas A_h)^\frac{n}{n-1}.$$
    By the superadditivity of $t \mapsto t^\frac{n}{n-1}$,

    \begin{equation}\label{sphere iso superadditivity}
    \sum_{h\in\N\setminus(\{1\}\cup\a)} \h(A_h) \leq C_S\big(\sum_{h\in\N\setminus(\{1\}\cup\a)} \H^{n-1} (\pas A_h)\big)^\frac{n}{n-1}.
    \end{equation}
    By Lemma \ref{essential partitions}, 
    \begin{equation}\label{sphere iso ess part est}
        \sum_{h\in\N\setminus(\{1\}\cup\a)} \H^{n-1} (\pas A_h) \le 2\H^{n-1}(J).
    \end{equation}
    Moreover, $S = J \cup \bigcup_{h=1}^\infty A_h$ and $\h(J) = 0$
    imply that 
    \begin{equation}\label{essential partitions of a sphere}
    \sum_{h\in\N\setminus(\{1\}\cup\a)} \h(A_h) = \h\Big(\bigcup_{h\in \N\setminus(\{1\}\cup\a)} A_h\Big) = \h\Big(S\setminus \big(A_1\cup\bigcup_{h\in\a}A_h\big)\Big).
    \end{equation}
    \eqref{sphere iso superadditivity}, \eqref{sphere iso ess part est} and \eqref{essential partitions of a sphere} collectively imply the result.
\end{proof}

\subsection{Cup competitors.}\label{subsec:cup}
Given a wet $N$-cluster $\E$, consider the ball of radius $r$ centered at a point $x$. By Lemmas \ref{Perimeter sections} and \ref{essential partitions}, for almost every $r$ the set $(\pas\E(N+1))_r$ has finite $\H^{n-1}$-measure and induces an essential partition $\{A_i\}$ on $\pa B_r$ such that \eqref{partition perimeter} holds. Without loss of generality, we may assume that $\h(A_i) \geq \h(A_{i+1})$. The cup competitor corresponding to $\E$ and $B_r$ is defined as follows:

\medskip

\noindent 
    \begin{enumerate}[label={\rm(\roman*).}]
        \item If $A_1 \hnsub \E(h)$ for $h\neq N+1$, then we define the set 
        \begin{equation}\label{Y defn1}
            Y = \bigcup\{A_i:\h(A_i \cap\E(N+1)) = 0, i\neq 1\}.
        \end{equation}
        We define the cup competitor $\F_{\eta,x,r}$, with respect to the ball $B_r(x)$, by \begin{align*}
        \F_\eta(N+1) &= \Big(\E(N+1) \setminus B_r(x)\Big) \cup N_\eta(Y)\\
        \F_\eta(h) &= \Big(\E(h)\cup B_r(x)\Big)\setminus N_\eta(Y)\\
        \F_\eta(i) &= \E(i)\setminus B_r(x) \text{ for }i = 0,\dots h-1, h+1,\dots N.
        \end{align*}
        In the case where the ball $B_r(x)$ is clear, we shorten $\F_{\eta,x,r}$ to $\F_\eta$. 
        
        \medskip
        
        \item If instead it holds that $A_1 \hnsub \E(N+1)$, then we define \begin{equation}\label{Y defn2}
            Y = \bigcup\{A_i:A_i \hnsub \E(N+1), i\neq 1\}.
        \end{equation} The cup competitor $\F_\eta$ is now defined to be 
        \begin{align*}
        \F_\eta(N+1) &= \Big(\E(N+1) \cup B_r(x)\Big) \setminus N_\eta(Y)\\
        \F_\eta(0) &= \Big(\E(0)\setminus B_r(x)\Big)\cup N_\eta(Y)\\
        \F_\eta(i) &= \E(i)\setminus B_r(x) \text{ for }i = 1,\dots N.
        \end{align*}
    \end{enumerate}
This leads us to the following lemma.
\begin{lemma}\label{cup competitor main}
    If $\E$ is a wet $N$-cluster, $x\in\R^{n+1}$, and $\{A_i\}$ is the (ordered) partition of $\pa B_r$ given by Lemma \ref{essential partitions}, then for almost every $r\in\R$, $\F_{\eta}$ is a wet $N$-cluster. \\
    Moreover,
    \begin{enumerate}[label={\rm(\roman*).}]
        \item If $\h(A_1 \cap \E(N+1)) = 0$, then 

        \begin{equation}\label{cup competitor main lemma 1}
            \limsup_{\eta\to0}\h(\pas \F_\eta(N+1)\cap B_r) \leq \h\big(\pa B_r\setminus(\E(N+1) \cup A_1)\big)
        \end{equation}
        \item If $A_1\hnsub\E(N+1),$ then

        \begin{equation}\label{cup competitor main lemma 2}
            \limsup_{\eta\to0}\h(\pas \F_{\eta}(N+1)\cap B_r) \leq \h\big((\E(N+1)\cap \pa B_r)\setminus A_1\big)
        \end{equation}
    \end{enumerate}
\end{lemma}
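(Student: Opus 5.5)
We fix $x$ and restrict to radii $r$ for which Lemma \ref{Perimeter sections} applies to every chamber $\E(h)$, $0\le h\le N+1$. For such $r$ we require that $(\pas\E(h))_r\hneq\pas(\E(h)_r)$ for each $h$, that $\H^{n-1}((\pas\E(N+1))_r)<\infty$, and that $\h(\pas\E(h)\cap\pa B_r)=0$. This is a full-measure set of radii, since $\h\llcorner\pas\E(h)$ is a finite measure. For these $r$ the partition $\{A_i\}$ of Lemma \ref{essential partitions} exists. Moreover, because $\E$ is wet, the traces $\E(h)_r$ for $h\le N$ are separated on $\pa B_r$ by $(\pas\E(N+1))_r$. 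Hence each $A_i$ is $\h$-contained either in $\E(N+1)_r$ or in $\bigcup_{h\le N}\E(h)_r$. Since $A_i$ is essentially connected and is not disconnected by $\pas\E(N+1)_r$, in the second alternative it lies in a single air chamber $\E(h)_r$. This justifies the dichotomy in the definition of the cup competitor, and shows that in case (i) the set $Y$ collects exactly those $A_i$, $i\ne1$, lying in air.

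Next we compute the reduced boundaries of $\F_\eta$ by the standard formulas for unions and intersections with $B_r(x)$ and with the thin one-sided collar $N_\eta(Y)$ (inner normal, so $N_\eta(Y)\subset B_r$). In case (i), $\F_\eta(N+1)$ is $\E(N+1)\setminus B_r$ together with the collar. Up to $\h$-null sets its reduced boundary inside $\ov B_r$ therefore consists of three pieces. The first is $Y$ and the parallel copy $\{y-\eta\nu\}$ of $Y$, with area $(1+O(\eta))\h(Y)$. The second is the lateral boundary of the collar over $\pa_{\pa B_r}Y$, with area at most $\eta\,\H^{n-1}(\pa^*Y)\le 2\eta\H^{n-1}((\pas\E(N+1))_r)$ by \eqref{partition perimeter}. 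The third is the part of $\pa B_r$ where $\E(N+1)$ touches from outside, namely $\E(N+1)_r$. Only the contributions inside the open ball $B_r$ enter the left side of \eqref{cup competitor main lemma 1}. These are the inner copy of $Y$ and the lateral part, so the $\limsup$ is at most $\h(Y)$. Since $Y\cup A_1\cup\E(N+1)_r$ exhausts $\pa B_r$ up to $\h$-null sets (the $A_i$ in liquid are contained in $\E(N+1)_r$), we get $\h(Y)\le\h(\pa B_r\setminus(\E(N+1)\cup A_1))$, which is \eqref{cup competitor main lemma 1}. Case (ii) is symmetric: $\F_\eta(N+1)\cap B_r=B_r\setminus N_\eta(Y)$, so the boundary inside $B_r$ is the inner copy of $Y$ plus the lateral part. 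Also $Y\subset(\E(N+1)\cap\pa B_r)\setminus A_1$ by \eqref{Y defn2}, which gives \eqref{cup competitor main lemma 2}.

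It remains to check wetness, i.e.\ $\pas\F_\eta(h)\hnsub\pas\F_\eta(N+1)$ for $h\le N$. Outside $\ov B_r$ nothing changes. Inside $B_r$, in case (i) the only air chamber present is $\F_\eta(h)=B_r\setminus N_\eta(Y)$, so there are no interior air–air interfaces. Its boundary in $B_r$ is exactly the collar boundary, which is shared with $\F_\eta(N+1)$. In case (ii) the ball minus the collar is entirely liquid, and the collar (exterior chamber) is bounded by liquid.

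The main obstacle is the sphere $\pa B_r$ itself, where we must show that every air–air or air–exterior contact is covered by liquid. In case (i), on $A_1$ the chamber $\E(h)$ meets itself, and since $\h(\pas\E(h)\cap\pa B_r)=0$ this creates no new boundary. On $Y$ the collar separates $\F_\eta(h)$ from the outside. On the liquid pieces $\E(N+1)_r$ the outer side is liquid. On the remaining null-measure set $(\pas\E(N+1))_r$ we have no $\h$ mass. The delicate point is that an $A_i\subset Y$ sits in some air chamber $\E(k)$ with $k\ne h$, and the collar must separate $\F_\eta(h)$ from $\E(k)\setminus B_r$ along all of $A_i$; this requires the trace of $\E(k)$ from outside to agree $\h$-a.e.\ with $A_i$. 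We expect to get this from $\h(\pas\E(k)\cap\pa B_r)=0$ via the trace (Gauss–Green) identity for sets of finite perimeter on spheres, which is exactly where restricting to a.e.\ $r$ is used.
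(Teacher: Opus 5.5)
Your route is the same as the paper's. The energy bound comes from the inner parallel copy of $Y$, which has area $((r-\eta)/r)^n\h(Y)$, plus the lateral collar, which has area at most $\eta P(Y)$ and is finite by \eqref{partition perimeter}. Wetness is reduced to points of $\pa B_r$. Your argument that each $A_i$ lies in a single chamber is a useful addition that the paper leaves implicit.

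The gap is the step you defer at the end, and that step is the entire content of the wetness claim on $\pa B_r$. The mechanism you name does not close it on its own. From $\h(\pas\E(k)\cap\pa B_r)=0$, the Gauss--Green trace formula only gives that the inner and outer traces of $1_{\E(k)}$ on $\pa B_r$ coincide. It does not give that they equal $1$ on $A_i\subset\E(k)_r$. The slice $\E(k)_r$ depends on the chosen representative of $\E(k)$, so something must tie it to the density behaviour of $\E(k)$ at $\h$-a.e. point of the sphere.

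The missing fact is elementary. The set $\E(k)\Delta\E(k)^{(1)}$ is Lebesgue-null, so Fubini in polar coordinates gives $\E(k)_r\hnsub\E(k)^{(1)}$ for a.e. $r$, hence $A_i\hnsub\E(k)^{(1)}$. The same statement for the complement also yields $\h(\pas\E(k)\cap\pa B_r)=0$, so no separate trace theory is needed. This inclusion is where the paper's proof starts, and it then argues by a direct density computation.

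Take $y\in A_i^{(1)}$ with $A_i\subset Y$, where the density is taken in $\pa B_r$.
\begin{enumerate}[label={\rm(\alph*)}]
\item Inside the ball, the collar $N_\eta(Y)$ asymptotically fills all of $B_\rho(y)\cap B_r$.
\item Outside the ball, $\F_\eta(N+1)\setminus B_r=\E(N+1)\setminus B_r$ has density zero at $y$, because $y\in\E(k)^{(1)}$ with $k\neq N+1$.
\end{enumerate}
Hence $y\in\F_\eta(N+1)^{(1/2)}\subset\pa^e\F_\eta(N+1)$, and Federer's theorem gives $A_i\hnsub\pas\F_\eta(N+1)$.

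The same one-sided density count handles the remaining pieces in case (i), and case (ii) is symmetric. Points of $A_1^{(1)}$ lie in $\F_\eta(h)^{(1)}$, so they create no boundary. Points of the liquid pieces lie in $\F_\eta(N+1)^{(1/2)}$. This completes your case analysis.
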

\begin{proof}
    Let $\F = \F_{\eta,x,r}$, where $r$ is chosen depending on $x$ such that Lemma \ref{Perimeter sections} holds. We prove the lemma in the case that $\h(A_1\cap\E(N+1)) = 0$. The other case follows via the identical logic. We first show that $\F$ is a wet $N$-cluster. It is clear by construction that $(\pas\F(h))\setminus\pa B_r \subset \pas\F(N+1)$ for any $h$, so it suffices to consider $y\in\pa B_r$. Moreover, given that our considerations ignore sets of $\H^{n-1}$ measure 0, we may further assume without loss of generality that $i\neq 1$ and $y\in A_i^{(1)}\subset\E(j)$ for $j = h$ or $N+1.$ If $j = N+1$, the fact that cup competitors are constructed via essential partitions, Lemma \ref{Perimeter sections} and Federer's theorem imply that $A_i^{(1)}\hnsub \E(N+1)^{(1)}.$ Then, \cite{Maggi_Book} Theorem 16.3 implies that $A_i^{(1)}\hnsub \pas\F(N+1)$. If $j = h$, then identically $A_i \hnsub \E(h)^{(1)}$ so we consider sets defined with respect to $\pa B_r$ by
    $$\mathcal{N}_{\p}(x) = \{y + t\nu_{\pa B_r}(y): y\in B^{\pa B_r}_\p(x), t\in(-\p,\p)\}$$
    We observe that for any $\sigma > 0$, there exists a $p$ such that for $\rho < p$ it holds that 
    \begin{equation*}
        \frac{\h\big(B^{\pa B_r(x)}_\p(y)\cap A_i\big)}{\h\big(B_\p^{\pa B_r(x)}(y)\big)} \geq 1-\sigma.
    \end{equation*}
    Therefore we observe by \cite{Maggi_Book} Exercise 5.19 and the above that
    \begin{align*}
        \frac{1}{2} \geq \frac{|\NN_\p(y)\cap \F(N+1)|}{|\NN_\p(y)|} \geq \frac{1}{2}(1-O(\p))(1-\sigma).
    \end{align*}
    This implies that $y\in\pa^e\F(N+1)\hneq \pas\F(N+1)$, which completes the proof that $\F$ is a wet $N$-cluster.\\
    Next observe that 
    \begin{align}
        \h\big(\pas\F(N+1) \cap B_r(x)\big) &= \h\big(\pas\F(N+1)\cap\pa B_{r-\eta}(x)\big) \nonumber\\&+ \h\big(\pas\F(N+1)\cap(B_r(x)\setminus \overline{B_{r-\eta}})\big). \nonumber
    \end{align}
    Up to a translation, we may assume without loss of generality that $x = 0$. The area formula implies that
    \begin{equation}\label{area formula for cup competitors}
        \h\big(\pas\F(N+1) \cap \pa B_{r - \eta}\big) = (\frac{r-\eta}{r})^n\h\big(\frac{r}{r-\eta}(\pas\F(N+1) \cap \pa B_{r - \eta})\big).
    \end{equation}
    By \eqref{Y defn1}, $\frac{r}{r-\eta}(\pas\F(N+1) \cap \pa B_{r - \eta}) = Y$, so \eqref{area formula for cup competitors} implies 
    \begin{equation}\label{estimate on Y}
        \h\big(\pas\F(N+1) \cap \pa B_{r - \eta}\big) = (\frac{r-\eta}{r})^n\h(Y).
    \end{equation}
    Next, if $u(x) = |x|$ then we claim that 
    \begin{equation}\label{coarea factor is 1 for cup}
        |\nabla^{\pas\F(N+1)}u| = 1 
    \end{equation}
    As $|\nabla u| = 1,$ it suffices to show that $(\nu_{\F(N+1)}\cdot \nabla u) = 0$. Indeed, consider $y \in \pas\F(N+1) \cap (B_r(x)\setminus \overline{B_{r-\eta}})$. By construction of $\F$, $\pas F(N+1)$ will contain a radial segment containing $y$. This implies that $\nu_{\F(N+1)}$ is perpendicular to the line connecting $0$ to $y$, however $\nabla u$ is parallel to that line, proving \eqref{coarea factor is 1 for cup}. Thus the coarea formula implies that \begin{equation*}
        \h\big(\pas\F(N+1)\cap(B_r(x)\setminus \overline{B_{r-\eta}})\big) = \int_{r-\eta}^r \H^{n-1}\big(\pas\F(N+1)\cap\pa B_{\p}(x)\big)d\p.
    \end{equation*}
    Applying the area formula then gives that 
    \begin{equation}\label{Estimates for the radial component for cup}
        \int_{r-\eta}^r \H^{n-1}\big(\pas\F(N+1)\cap\pa B_{\p}(x)\big)d\p = P(Y)\int_{r-\eta}^r (\frac{\p}{r})^{n-1}d\p \leq \eta P(Y).
    \end{equation}
    As $P(Y) < \infty$ for a.e. $r$, combining \eqref{estimate on Y} with \eqref{Estimates for the radial component for cup} gives \eqref{cup competitor main lemma 1}.
\end{proof}
\begin{remark}
    When constructing a cup competitor $\F_{\eta,x,r}$ given an essential partition of $\pa B_r(x), \{A_i\}$, the identical construction and estimates will work even if $\{A_i\}$ is unordered. We often require the ordering for the later steps, so it is notationally convenient to assume that $\{A_i\}$ is ordered.
\end{remark}

\subsection{Cone competitors.}
Suppose again that we have a wet $N$-cluster and a ball $B_r(x)$, which induces an ordered essential partition $\{A_i\}$ via Lemma \ref{essential partitions}. We define the set \begin{equation*}
    Z(h) = \bigcup\Big\{A_i : A_i\hnsub \E(h)\Big\}
\end{equation*} 
Then we define the cone competitor $\F_{x,r}$ (again shortened to $\F$) by
\begin{equation}
    \F(h) = \big(\E(h)\setminus B_r(x)\big)\cup\big\{ty + (1-t)x: t\in[0,1], y\in \pa B_r(x) \cap Z(h)\big\}
\end{equation}
This brings us to the following lemma
\begin{lemma}\label{Cone competitors basic info}
    If $\E$ is a wet $N$-cluster, $x\in\R^{n+1}$ and $B_r(x)$ is chosen such that \begin{equation*}
        \H^{n-1}\big(\pas\E(N+1)\cap\pa B_r(x)\big) < \infty,
    \end{equation*}then the cone competitor $\F_{x,r}$ is a wet $N$-cluster satisfying the estimate
    \begin{equation}\label{cone competitor estimate}
        \H^{n}\big(\pas\F(N+1) \cap B_r(x)\big) \leq \frac{r}{n}\H^{n-1}\big(\pas\F(N+1)\cap\pa B_r(x)\big)
    \end{equation}
\end{lemma}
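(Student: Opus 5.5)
We will prove two things: that $\F=\F_{x,r}$ is a wet $N$-cluster, and the cone estimate \eqref{cone competitor estimate}. Up to a translation we take $x=0$. The main tool is the structure of cones over finite-perimeter subsets of the sphere. For each $h$, $Z(h)$ is a union of elements of the essential partition of $\pa B_r$ from Lemma \ref{essential partitions}, induced by $S=(\pas\E(N+1))_r$. Hence $\sum_h P(Z(h);\pa B_r)\le 2\H^{n-1}(S)<\infty$, and the cone $C(Z(h))=\{ty:t\in[0,1],y\in Z(h)\}$ is a set of finite perimeter in $B_r$. The first step is the homogeneity computation
\[
\pas C(Z(h))\cap B_r\hneq\{ty:t\in(0,1),\,y\in\pas Z(h)\},
\]
where $\pas Z(h)$ is taken relative to $\pa B_r$. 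This holds because the conical normal at $ty$ equals the spherical normal $\nu_{Z(h)}(y)$, which is tangent to the sphere and therefore orthogonal to the radial direction.

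\textbf{Wetness.} We check that each air-chamber interface lies $\h$-a.e.\ in $\pas\F(N+1)$, splitting into three regions.

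In the open region $\rn\setminus\overline{B_r}$ nothing has changed, so the inclusion $\pas\F(h)\hnsub\pas\F(N+1)$ is inherited from $\E$.

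On $\pa B_r$ we choose $r$ so that Lemma \ref{Perimeter sections} applies. Then $\h$-a.e.\ point of $\pa B_r$ lies in the density-one part of a single $A_i$, and that $A_i$ lies in a single $\E(h)^{(1)}$. Since $A_i\subset Z(h)$, inside $B_r$ the point is filled by the cone over $Z(h)$, which is again $\F(h)$. So it is an interior density point of $\F(h)$ and carries no interface.

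Inside $B_r$, every interface point $ty$ has $y\in\pas Z(h)\cap\pas Z(k)$ for some $h\ne k$. This set is contained up to $\H^{n-1}$-null sets in $S$. The main obstacle is to show that at such $y$ one of the two adjacent pieces is liquid, i.e.\ that $N+1\in\{h,k\}$. Two air chambers meeting on the sphere away from $\pas\E(N+1)$ would contradict wetness of $\E$ after slicing, again by Lemma \ref{Perimeter sections}. Concretely, $(\pas\E(h))_r\cap(\pas\E(k))_r$ with $h,k\le N$ is contained in $(\pas\E(N+1))_r$, while at $\H^{n-1}$-a.e.\ point of $(\pas\E(N+1))_r$ the liquid has density $1/2$ on the sphere. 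We would make this density statement rigorous using Federer's theorem on $\pa B_r$ together with the slicing lemma. Granting it, $\pas\F(h)\cap\pas\F(k)\cap B_r\hnsub\pas\F(N+1)$, and $\F$ is wet. The chamber volumes change, but the volume constraint is not part of the definition of a wet cluster, so this is harmless.

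\textbf{Estimate.} The interface $\pas\F(N+1)\cap B_r$ is the cone over $\Gamma=\pas Z(N+1)$. Since the normal is orthogonal to the radial direction, the coarea factor of $u=|x|$ on this cone equals $1$, exactly as in the proof of Lemma \ref{cup competitor main}. The coarea and area formulas then give
\[
\h(\pas\F(N+1)\cap B_r)=\int_0^r\Big(\frac{\rho}{r}\Big)^{n-1}\H^{n-1}(\Gamma)\,d\rho=\frac rn\,\H^{n-1}(\Gamma).
\]
Finally, $\Gamma\hnsub\pas\F(N+1)\cap\pa B_r$, since these are liquid boundary points viewed from inside the ball (they are approached by the cone). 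This gives \eqref{cone competitor estimate}.
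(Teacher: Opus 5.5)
Your proposal follows the paper's proof. The cone over $Z(h)$ has a radial reduced boundary, so the coarea factor of $|y-x|$ is $1$ and scaling gives the factor $r/n$. Wetness is reduced, as in the paper, to the slice inclusion $(\pas\E(h))_r\hnsub(\pas\E(N+1))_r$ from Lemma \ref{Perimeter sections}. Your separate check that $\h$-a.e.\ point of $\pa B_r$ is a density-one point of a single $\F(h)$ fills in something the paper leaves implicit. The density fact you leave ``granted'' needs no extra work. Each $A_i$ lies in a single section, so $Z(h)\hneq\E(h)_r$, and Lemma \ref{Perimeter sections} gives $\pas Z(h)\hneq(\pas\E(h))_r\hnsub(\pas\E(N+1))_r\hneq\pas Z(N+1)$ for $1\le h\le N$. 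This is exactly the paper's chain.

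The final step, however, does not hold as you justify it. A point $y\in\pas Z(N+1)$ is in general \emph{not} in $\pas\F(N+1)$, and being ``approached by the cone'' does not give membership in the reduced boundary.
\begin{itemize}
\item From inside $B_r$, the blow-up of $\F(N+1)$ at $y$ is the half-space whose normal $\nu_{Z(N+1)}(y)$ is tangent to $\pa B_r$.
\item From outside, it is the half-space with normal $\nu_{\E(N+1)}(y)$, which is in general not tangent.
\item So the blow-up is a wedge, and $y\in\pa^e\F(N+1)\setminus\pas\F(N+1)$.
\end{itemize}
For instance, take $n=1$ with the liquid near $\pa B_r$ a strip crossing the circle transversally. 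The four crossing points are corners, so the literal set $\pas\F(N+1)\cap\pa B_r$ is empty, while the left-hand side of \eqref{cone competitor estimate} equals $4r$.

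The right-hand side must therefore be read as the paper does in the first display of its proof: as $\H^{n-1}$ of the slice $(\pas\E(N+1))_r$, which is the quantity later bounded by $f_k'(r)$. With that reading, the identity you need, $\H^{n-1}(\pas Z(N+1))=\H^{n-1}((\pas\E(N+1))_r)$, follows from $Z(N+1)\hneq\E(N+1)_r$ together with Lemma \ref{Perimeter sections}. It is not a geometric property of the cone.
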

\begin{proof}
    As $\E$ is itself a wet $N$-cluster, by our choice of $r$ satisfying Lemma \ref{Perimeter sections} we see 
    \begin{equation*}
        \pas\F(h)\cap\pa B_r(x) = \pas\E(h)\cap\pa B_r(x) \stackrel{\H^{n-1}}{\subset} \pas\E(N+1)\cap\pa B_r(x) = \pas\F(N+1)\cap\pa B_r(x)
    \end{equation*}
    As $\pas\F(h)\cap\pa B_\p(x)$ is a diffeomorphic image of $\pas\F(h)\cap\pa B_r(x)$ via scaling, we have 
    \begin{equation}\label{cone competitor containment condition}
        \H^{n-1}\big((\pas\F(h)\setminus\pas\F(N+1))\cap\pa B_\p(x) \big) = 0
    \end{equation}
    If we again assume by translating that $x=0$ and let $u(y) = |y|$, the verbatim reasoning of \eqref{coarea factor is 1 for cup} implies that 
    \begin{align}\label{coarea factor is 1 for cone}
        |\nabla^{\pas\F(h)\setminus\pas\F(N+1)} u| &= 1 \text{ for } y\in\pas\F(h)\setminus\pas\F(N+1)\\
        \label{coarea factor is 1 for cone pt 2}
        |\nabla^{\pas\F(N+1)}u| &= 1 \text{ for } y\in\pas\F(N+1).
    \end{align}
    Applying the coarea formula to \eqref{cone competitor containment condition}, we see by \eqref{coarea factor is 1 for cone} that $\F$ is a wet $N$-cluster.\\
    By using \eqref{coarea factor is 1 for cone pt 2}, the coarea formula implies that
    \begin{align*}
        \h\big(\pas\F(N+1)\cap B_r(x)\big) &= \int_0^r\H^{n-1}(\pas\F(N+1)\cap\pa B_\p(x))\,d\p\\ &= \int_0^r\big(\frac{\p}{r}\big)^{n-1}\H^{n-1}\Big(\big(\frac{r}{\p}\big)\pas\F(N+1)\cap \pa B_{\p}\Big)\,d\p \\&= \H^{n-1}\big(\pas\F(N+1)\cap\pa B_r\big)\int_0^r  (\frac{\p}{r})^{n-1}\,d\p\\ &= \frac{r}{n}\H^{n-1}\big(\pas\F(N+1)\cap\pa B_r\big),
    \end{align*}
    which is exactly \eqref{cone competitor estimate}. 
\end{proof}

\subsection{Slab competitors.}\label{subsec:slab}
Using a bi-Lipschitz deformation, we may construct slab competitors from cup competitors. We consider the slab 
\begin{equation*}
    S_{\tau,r}^\nu(x) = \{y\in B_r(x): |(y-x)\cdot\nu| < \tau r\}
\end{equation*} 
As in \cite{King_2022}, we have the existence of a bi-Lipschitz map $\Phi_r:\R^{n+1} \to \R^{n+1}$ such that 
\begin{align*}
    \{\Phi_r\neq\,{\rm Id}\}\subset B_{2r}(x), && \Phi_r(B_{2r}(x)) = B_{2r}(x), && \Phi_r(\pa S_{\tau,t}^\nu(x)) = \pa B_t(x)\,\,\,\forall t\in(0,\frac{3r}{2}),
\end{align*}
and $\Lip(\Phi_r)$, $\Lip(\Phi^{-1}_r)$ depend only on $n$ and $\tau$. If $\E$ is a wet $N$-cluster, consider the cluster $\Phi_r(\E)$. We construct the cup competitor $\F_{\eta,x,r,A}$, where $A$ is now an \textit{arbitrary} element of the essential partition from Lemma \ref{essential partitions}. Finally, we take $\Sc_{\eta, \tau, x, r, A}^\nu = \Phi_r^{-1}(\F_{\eta,x,r})$. When the parameters are clear, we shorten $\Sc_{\eta, \tau, x, r, A}^{\nu}$ to $\Sc_{\eta,A}$ and $\Sc_{\eta, \tau, x, r, A}^\nu(N+1)$ to $E_A$.
\begin{lemma}\label{slab competitor main}
    If $\E$ is a wet $N$-cluster, $\nu\in \SS^{n}$, $x\in\R^{n+1}$ and $\tau \in (0,1)$, then for almost every $r\in \R$ the slab competitor $\Sc_{\eta, \tau, x, r, A}^\nu$ is a wet $N$-cluster.\\
    Moreover,
    \begin{enumerate}[label={\rm(\roman*).}]
        \item If $\h\big(A \cap \Phi_r(\E(N+1))\big) = 0$, then 
        \begin{equation}\label{slab competitor main estimate 1}
            \limsup_{\eta\to0}\h\big(\pas E_A\cap S^\nu_{\tau,r}(x)\big) \leq C(n,\tau)\h\Big(\pa S_{\tau,r}^\nu(x)\setminus\big(\E(N+1) \cup \Phi_r^{-1}(A)\big)\Big)
        \end{equation}
        \item If $A\hnsub\Phi_r(\E(N+1))$, then
        \begin{equation}\label{slab competitor main estimate 2}
            \limsup_{\eta\to0}\h\big(\pas E_A\cap S_{\tau,r}^\nu(x)\big) \leq C(n,\tau)\h\Big(\E(N+1)\cap\pa S_{\tau,r}^\nu\setminus \Phi_r^{-1}(A)\Big)
        \end{equation}
    \end{enumerate}
\end{lemma}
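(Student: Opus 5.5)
The plan is to transport Lemma \ref{cup competitor main} through the bi-Lipschitz map $\Phi_r$. First, I would check that $\Phi_r(\E)$ is a wet $N$-cluster. Bi-Lipschitz homeomorphisms map sets of finite perimeter to sets of finite perimeter. They also preserve $\h$-null sets and send the essential boundary of a set to the essential boundary of its image, since $\pa^e$ is characterized by upper and lower densities being bounded away from $0$ and $1$, and these bounds are distorted by at most a factor depending on $\Lip(\Phi_r)$ and $\Lip(\Phi_r^{-1})$. By Federer's theorem, $\pas\hneq\pa^e$. Hence $\bigcup_{h\le N}\pas\E(h)\hnsub\pas\E(N+1)$ is carried to $\bigcup_{h\le N}\pas\Phi_r(\E(h))\hnsub\pas\Phi_r(\E(N+1))$. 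The volume of the liquid chamber changes, but the wetness condition does not involve volumes, so it is irrelevant here.

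Second, I would fix the admissible radii. By the coarea formula, Lemma \ref{Perimeter sections} and Lemma \ref{essential partitions} apply to $\Phi_r(\E)$ on $\pa B_r(x)$ for a.e. radius. The subtlety is that $\Phi_r$ itself depends on $r$. To handle it I would use the scaling structure $\Phi_r(y)=x+r\Phi_1((y-x)/r)$, as in \cite{King_2022}. Then $\Phi_r(\pa S^\nu_{\tau,r}(x))=\pa B_r(x)$, and the slices can be rewritten as $\Phi_r(\pa S^\nu_{\tau,r}\cap\cdot)$. The coarea formula along the Lipschitz foliation $\{\pa S^\nu_{\tau,t}\}_t$ gives $\H^{n-1}(\pas\E(N+1)\cap\pa S^\nu_{\tau,r})<\infty$ and the slicing identity $\pas(E\cap\pa S_{\tau,r})\hneq(\pas E)\cap\pa S_{\tau,r}$ for a.e. $r$. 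This is the analogue of Lemma \ref{Perimeter sections} with $|x|$ replaced by the gauge function of the slab. Pushing forward by $\Phi_1$ then shows that the hypotheses of Lemma \ref{cup competitor main} hold for $\Phi_r(\E)$ at radius $r$.

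Third, the cup construction. I would observe that the proof of Lemma \ref{cup competitor main} never used that $A_1$ has the largest measure, only that one fixed element is removed from $Y$ (this is the preceding remark). So with $A$ an arbitrary element of the partition, $\F_{\eta,x,r}$ is a wet $N$-cluster and satisfies \eqref{cup competitor main lemma 1}--\eqref{cup competitor main lemma 2} with $A_1$ replaced by $A$. Applying $\Phi_r^{-1}$ and the first step shows that $\Sc_{\eta,A}$ is a wet $N$-cluster.

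Finally, the estimates. Since $\Phi_r^{-1}(B_r(x))=S^\nu_{\tau,r}(x)$, we have $\pas E_A\cap S^\nu_{\tau,r}=\Phi_r^{-1}(\pas\F_\eta(N+1)\cap B_r)$ up to $\h$-null sets. The area formula gives $\h(\Phi_r^{-1}(\cdot))\le \Lip(\Phi_r^{-1})^n\h(\cdot)$. The right-hand sides of the cup estimates are sets of the form $\pa B_r\setminus(\Phi_r(\E(N+1))\cup A)=\Phi_r(\pa S^\nu_{\tau,r}\setminus(\E(N+1)\cup\Phi_r^{-1}(A)))$ in case (i), and similarly in case (ii). Their $\h$ measure is at most $\Lip(\Phi_r)^n$ times that of the preimage. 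Combining the two bounds gives \eqref{slab competitor main estimate 1} and \eqref{slab competitor main estimate 2} with $C(n,\tau)=(\Lip\Phi_r\,\Lip\Phi_r^{-1})^n$, which depends only on $n,\tau$. The $\limsup_{\eta\to0}$ passes through because the constant is independent of $\eta$. The main obstacle I expect is the second step, the a.e. $r$ slicing statement for the non-spherical, only Lipschitz boundaries $\pa S_{\tau,r}$ with an $r$-dependent map. If the scaling form of $\Phi_r$ is unavailable, I would instead choose $\Phi_r$ radially of the form $y\mapsto x+g(\cdot)(y-x)$ so that the same coarea argument applies.
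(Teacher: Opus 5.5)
Your proposal is correct and follows the paper's proof: show that $\Phi_r(\E)$ is wet, build the cup competitor of Lemma \ref{cup competitor main} for $\Phi_r(\E)$ using an arbitrary element $A$ of the essential partition, pull it back by $\Phi_r^{-1}$, and absorb the distortion of the area estimates into the constant $C(n,\tau)=(\Lip\Phi_r\,\Lip\Phi_r^{-1})^n$. The one difference is how you choose admissible radii: the paper pushes $\E$ forward by the fixed, $r$-independent map $\Psi$, which agrees with $\Phi_r$ near $\pa B_r(x)$, and then applies Lemma \ref{Perimeter sections} to $\Psi(\E)$ on round spheres; this is simpler than your plan of slicing $\E$ along the Lipschitz boundaries $\pa S^\nu_{\tau,r}$ and then transporting the relative reduced boundaries to $\pa B_r(x)$.
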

 
\begin{proof}
    The first observation to make is that because for any $r$, $\Phi_r$ is bi-Lipschitz, Proposition 17.1 from \cite{Maggi_Book} will imply that $\Phi_r(\E)$ is a wet $N$-cluster. The construction of $\Phi_r$ involves taking the cutoff of a function $\Psi$ that satisfies $\Psi(\pa S^\nu_{\tau,t}) = \pa B_t$ for all $t$. Letting $\G = \Psi(\E)$, each chamber of $\G$ will be of locally finite perimeter as $\Psi$ is locally Lipschitz. Thus for an $r$ where Lemma \ref{Perimeter sections} holds, it follows immediately that $\pas \Phi_r(\E)_r = (\pas\Phi_r(\E))_r$ as $\Phi_r$ and $\Psi$ agree on a neighborhood of $\pa B_r(x)$. Thus we immediately conclude that the cup competitor to $\Phi_r(\E)$, $\F_{\eta,x,r,A}$ is a wet $N$-cluster satisfying the estimates of Lemma \ref{cup competitor main}. Again because $\Phi_r$ is bi-Lipschitz, $\Phi_r^{-1}(\F_{\eta,x,r,A}) = \Sc^\nu_{\eta,\tau,x,r,A}$ is a wet $N$-cluster. \\
    If $\h\big(A\cap\Phi_r(\E(N+1)\big) = 0$, then $\F$ satisfies \eqref{cup competitor main lemma 1}. Then, \eqref{slab competitor main estimate 1} follows immediately by applying the fact that
    \begin{equation*}
        \h(f(E))\leq\lip(f)^n\h(E)
    \end{equation*}
    to \eqref{cup competitor main lemma 1}, where we see that $C(n,\tau) = \lip(\Phi_r)^n\lip(\Phi_r^{-1})^n$. When \eqref{cup competitor main lemma 2} holds instead, the reasoning is identical to show \eqref{slab competitor main estimate 2} will hold.
\end{proof}

\begin{remark}\label{multi component wet slab}
    We may allow $A$ to be a union of indecomposable components $A_1 \cup A_2$ so long as there exists an $h$ such that $\h((A_1 \cup A_2) \setminus\E(h)) = 0$. This will prove useful in determining the structure of generalized minimizers to the wet isoperimetric problem. As this version is a bit more messy notationally to work with, we only use it when strictly necessary.
\end{remark}

\section{Existence of generalized minimizers} 
In this section, we prove Theorem \ref{existence thm main} by utilizing the various tools developed in Section \ref{sec:competitors}. The overview of the proof is as follows. In step one, we show that we may modify a $\psi_\e(v)$-minimizing sequence of wet $N$-clusters $\E_j$ with a limit cluster $\E$. Moreover, the measures $\h\llcorner \pas\E_j(N+1)$ will have a weak star limit $\mu.$ In step two, we show that the boundaries of the chambers of the limit cluster are contained in the $K = \text{supp}(\mu).$ In step three, we provide a geometric argument to show $\theta_*(\mu)\geq 2$ on $\pas\E(i)\setminus\pas\E(N+1)$ for any $i\neq N+1$. In step four, we utilize cup and cone competitors to show that $\mu = \theta\h\llcorner K$ for $\h$-rectifiable $K$ and $\theta$ upper semicontinuous. In step five, we use slab competitors to show $\theta_*(\mu)= 2$ on $\pas\E(i)\setminus\pas\E(N+1)$ and $\theta_*(\mu) = 1$ on $\pas\E(N+1)$. In step six, we rule out interior collapsing with the multi-component slab competitors from remark \ref{multi component wet slab}.
\begin{proof}
    [Proof of Theorem \ref{existence thm main}] 
  
    \medskip
  
    \noindent {\it Step one}: In this step, we show that for a minimizing sequence of wet $N$-clusters, we may modify the clusters such that they have a limiting $(N+1)$-cluster. This is done using minor modifications of the classical nucleation, truncation, and volume fixing variation process found in Chapter 29 of \cite{Maggi_Book}. Let $s_0 = (2\frac{\e+\sum_{k=1}^N v_k}{\omega_{n+1}})^{\frac{1}{n+1}}$. Define the space \begin{equation*}
      V= \{a\in\R^{N+2}:\sum_{h=0}^{N+1}a(h) = 0\}.
    \end{equation*} By step one of Section 29.7 in \cite{Maggi_Book}, there exist $\{x(h)\}_{h=0}^{N+1}$ and $r_*,\, C, \, \e_1 > 0$ such that there exist C$^1$ functions 
    \begin{equation*}
      \Psi_k: \big((-\e_1,\e_1)^{N+2}\cap V\big) \times\rn\to\rn,
    \end{equation*}
    where $\Psi_k(a,\cdot)$ is a diffeomorphism satisfying
    \begin{align}
        \big\{x\in\R^{n+1}:\Psi_k(a,x) \neq x\big\}&\subset\subset \bigcup_{h=1}^{N+1} B_{s_0}(x_k(h)),\label{support of volume fixing variation}\\
        \Big|\Psi_k(a,\E_k(h))\Big| &= \big|\E_k(h)\big| + a(h),\label{volume fixing for volume fixing variations}\\
        \Big|\h\big(\Psi_k(a,\S)\big) - \h(\S)\Big| &\leq C\h(\S)\sum_{h=0}^{N+1}|a(h)|\label{perimeter estimates for volume fixing variations}
    \end{align}
    for any $\h$-rectifiable set $\S$. Then for sufficiently small $\e_0$, by Lemma \ref{nucleation lemma} we deduce the existence of a sequence of finitely many points $\{x_k(h,i)\}_{i=1}^{L(h,k)}$ with the property that
    \begin{equation*}
        \Big|\E_k(h)\setminus\bigcup_{i=1}^{L(h,k)} B_2(x_k(h,i))\Big|<\e_0
    \end{equation*}
    and $L(h,k) < C$ uniformly in $h$ and $k$. Next, we define the closed sets
    \begin{equation*}
        F_k = \bigcup_{h=1}^{N+1}\Big(\overline{B}_{s_0}(x_k(h)) \cup\bigcup_{i=1}^{L(h,k)} \overline{B}_2(x_k(h,i))\Big).
    \end{equation*}
    Then applying Lemma \ref{truncation lemma} with $\E_k$ and $F_k$ (so that $u_k = \dist(F_k,\cdot)$), we get new clusters $\E_k'$ and corresponding $r_k$ satisfying \eqref{truncation estimates}. Define $\F_k$ as in Remark \ref{wet truncated cluster remark} with $\delta_k$ chosen small enough that 
    \begin{equation*}
        P(\F_k(N+1)) \leq P(\E_k'(N+1)) + 2\h\Big(\bigcup_{h=1}^N \E_k(h) \cap\{u_k = r_k\}\Big)  + \frac{1}{k}.
    \end{equation*}
    As $\E_k'$ satisfies \eqref{truncation estimates}, we gain the stronger information
    \begin{equation}\label{capped truncation estimate}
        P(\F_k(N+1)) \leq P(\E_k(N+1)) + \frac{1}{k} - \frac{d(\E_k,\E_k')}{4\e_0^{\frac{1}{n+1}}}
    \end{equation}
    If $\delta_k$ and $\e_0$ are sufficiently small, we may apply the volume fixing diffeomorphism $\Psi_k$ to $\F_k$ to get a new cluster $\E''_k$. Using \eqref{volume fixing for volume fixing variations}, we can ensure that 
    \begin{equation}
        |\E_k''(h)| = |\E_k(h)| \,\,\,{\rm for}\,\, 1\leq h\leq N+1.
    \end{equation}
    Moreover, applying \eqref{perimeter estimates for volume fixing variations} to the set $\pas \F(N+1)$ alongside the triangle inequality yields the estimate
    \begin{equation}\label{truncation volume fixing}
        P(\E_k''(N+1)) \leq P(\F_k(N+1)) + 2CP(\F_k(N+1))\big(d(\E_k,\E_k')+d(\E_k',\F_k)\big)
    \end{equation}
    Again up to shrinking $\de_k$, we may ensure $d(\E_k,\E_k')\geq d(\E_k',\F_k)$. Using this in conjunction with \eqref{capped truncation estimate}, \eqref{truncation volume fixing} becomes
    \begin{equation}
        P(\E_k''(N+1)) \leq P(\E_k(N+1)) + \frac{1}{k} - \frac{d(\E_k,\E_k')}{4\e_0^{\frac{1}{n+1}}} + 8C\psi_\e d(\E_k,\E_k').
    \end{equation}
    Thus for a sufficiently small $\e_0$, $\E_k''$ is also a perimeter minimizing sequence. However, it also satisfies the containment condition $\E_k'' \subset \{u_k \leq r_k + \delta_k\}$. As we can assume without any loss of generality $\delta_k < (n+1)\e_0^{\frac{1}{n+1}}$, we have the stronger conclusion $\E_k'' \subset \{u_k \leq 12(n+1)\e_0^\frac{1}{n+1}\}$, implying that the sequence $\{\E_k''\}$ may be uniformly contained in some ball of radius $R$ about the origin.
  
    \medskip
  
    \noindent {\it Step two}: Step one implies that we may take a minimizing sequence $\E_k \to \E$ such that $\mu_{\E_k(N+1)} = \mu_k \weakstar \mu$. The remainder of the proof is dedicated to showing that $\mu$ satisfies the desired properties of Theorem \ref{existence thm main}, i.e. $\mu$ will correspond exactly with the relaxed wet energy for $\E$. In this step, we start to show that correspondence by showing
    \begin{equation}\label{support of limit measure}
        \bigcup_{h=1}^{N+1}\pas\E(h) \hnsub {\rm supp}(\mu).
    \end{equation}
    Indeed, if $x\in\pas\E(i)$ then we have that 
    \begin{equation*}
        \mu\Big(\overline{B_r(x)}\Big)\geq \limsup_{k\to\infty}P\Big(\E_k(N+1);\overline{B_r(x)}\Big) \geq \liminf_{k\to\infty} P\Big(\E_k(i);B_r(x)\Big)
    \end{equation*}
    As $\E_k \to \E$, lower semicontinuity of perimeter will imply that
    \begin{equation*}
        \mu\Big(\overline{B_r(x)}\Big) \geq \liminf_{k\to\infty} P\Big(\E_k(i);B_r(x)\Big) \geq P(\E(i);B_r(x)) > 0
    \end{equation*} proving \eqref{support of limit measure}.
  
    \medskip
  
    \noindent {\it Step three}:
    The next step is to get a lower bound on the densities of $\mu$ along the reduced boundaries of $\E$. It is immediate that if $x\in\pas\E(N+1)$, then $\t_n^*(\mu)(x)\geq 1$ by lower semicontinuity. We now show that if $x\in\pas\E(i)\setminus\pas\E(N+1)$, then $\t_n^*(\mu)(x) \geq 2.$
    If $x\in\pas\E(i)\cap\pas\E(j)$, then up to rigid motions we may assume that $x = 0$ and $\nu_{\E(i)}(0) = e_{n+1}.$ We define the half-spaces 
    \begin{align*}
        \{y_{n+1}\leq 0\} = H^-\\
        \{y_{n+1} \geq 0\} = H^+
    \end{align*}
    Then \cite{Maggi_Book} Theorem 15.5 and Corollary 15.8 will imply that 
    \begin{align}
        \big|(\E(i)\Delta H^-)\cap B_r(0)\big| &= o(r^{n+1})\label{lower half plane estimate}\\
        \big|(\E(j)\Delta H^+)\cap B_r(0)\big| &= o(r^{n+1})\label{upper half plane estimate}\\
        P\big(\E(i);B_r(0)\big) = P\big(\E(j);B_r(0)\big) &= \omega_nr^n + o(r^n) \label{blowup estimates}
    \end{align}
    Given $\sigma > 0$, by \eqref{lower half plane estimate} there exists $r_0(\sigma)$ such that for any $r < r_0$, it holds that
    \begin{equation}\label{half plane density estimate for lower bound}
        \big|(\E(i)\Delta H^-)\cap B_r(0)\big| \leq \frac{\om_n \s^2}{2}r^{n+1}.
    \end{equation}
    As $\E_k(i) \to \E(i)$, we have that $(\E_k(i)\Delta H^-) \cap B_r(0) \to (\E(i)\Delta H^-) \cap B_r(0)$, implying that we may choose $k_0(\sigma,r)$ such that for $k\geq k_0$, it holds via the triangle inequality that
    \begin{equation}\label{whole sequence half plane density estimate for lower bound}
        \big|(\E_k(i)\Delta H^-)\cap B_r(0)\big| \leq \om_n \s^2r^{n+1}.
    \end{equation}
    Now define the set
    \begin{equation*}
        Z_{k,\sigma}(i) = \{y \in D_r(0) : \exists z\in\E_k(i)^{(1)}: z_{n+1} < -\sigma r, \mathbf{p}(z) = y\},
    \end{equation*}
    where $D_r(0)$ is the $n-$dimensional disk about 0 and $\mathbf{p}(z)$ is the projection onto $D_r(0)$. \eqref{whole sequence half plane density estimate for lower bound} and some elementary geometry will imply that
    \begin{equation}\label{basic geometry volume bound}
        \big|\E_k(i)\cap (D_{r\sqrt{1-4\s^2}}\times[-2\s r,-\s r])\big| \ge \om_n\big((\sqrt{1-4\s^2})^n-\s\big)\s r^{n+1}.
    \end{equation}
    For brevity, we will define $\p = r\sqrt{1-4\s^2}$. By the coarea formula and \eqref{basic geometry volume bound}, we see
    \begin{equation*}
        \frac{1}{\s r}\int_{-2\s r}^{-\s r} \frac{\h\big(\E_k(i)\cap (D_{\p}(0)\times\{y\})\big)}{\h\big(D_\p(0)\big)}dy \ge 1 - \frac{\s}{(1-4\s^2)^\frac{n}{2}}.
    \end{equation*}
    This implies the existence of a $y_i \in (-2\s r, -\s r)$ such that
    \begin{equation}\label{cylinder estimate for lower bound}
        \h\big(\E_k(i) \cap (D_\p(0)\times\{y_i\})\big)\geq \h\Big(D_\p(0)\Big)\Big(1 - \frac{\s}{(1-4\s^2)^\frac{n}{2}}\Big).
    \end{equation}
    We may analogously find $y_j \in (\s r, 2\s r)$ such that \eqref{cylinder estimate for lower bound} holds for all $\E_k(j)$ with $k \geq k_0$. Next, we want to consider the cylinder $Q_{k,\s} = D_\p(0) \times[y_i,y_j]$ (we write $Q = Q_{k,\s}$ for brevity). As $Q$ is bounded, applying the divergence theorem to the constant vector field $-e_{n+1}$ and \cite{Maggi_Book} Theorem 16.3 shows that
    \begin{align}\label{cylinder divergence equation}
        0 = -\int_{Q\cap\E_k(i)} {\rm div}(e_{n+1})dx &= -\int_{Q\cap\pas\E_k(i)}e_{n+1}\cdot \nu_{\E_k(i)} \,d\h \\ 
        &-\int_{\{\nu_Q = \nu_{\E_k(i)}\}} e_{n+1}\cdot\nu_Q\,d\h\nonumber\\
        &-\int_{\E_k(i)^{(1)}\cap\pas Q} e_{n+1}\cdot\nu_Q\,d\h.\nonumber
    \end{align}
    We estimate each term in \eqref{cylinder divergence equation} separately.
    \begin{equation}\label{cylinder divergence estimate 1}
        -\int_{Q\cap\pas\E_k(i)} e_{n+1}\cdot\nu_{\E_k(i)}\,d\h \geq - \h\big(\pas \E_k(i) \cap Q\big).
    \end{equation}
    Next, we have by \eqref{cylinder estimate for lower bound} that
    \begin{align}\label{cylinder divergence estimate 2}
        &-\int_{\{\nu_Q = \nu_{\E_k(i)}\}} e_{n+1}\cdot\nu_Q\,d\h \nonumber\\ &= \h\big(\{\nu_{\E_k(i)} = -e_{n+1}\} \cap (D_\p(0)\times\{y_i\})\big) - \h\big(\{\nu_{\E_k(i)} = e_{n+1}\} \cap (D_\p(0)\times\{y_j\})\big)\nonumber\\
        &\geq -\h\big(\pas\E_k(i) \cap (D_\p(0)\times\{y_j\})\big) \geq -\h\big((D_\p(0)\times\{y_j\})\setminus \E_k(j)^{(1)}\big)\nonumber\\&\geq-\frac{\s}{(1-4\s^2)^\frac{n}{2}}\h\big(D_\p(0)\big),
    \end{align}
    where we have used that $\pas\E_k(i)\cap\E_k(j)^{(1)} = \emptyset$ and \eqref{cylinder estimate for lower bound} for $\E_k(j)$.
    Lastly, reusing the logic for \eqref{cylinder divergence estimate 2}, we conclude
    \begin{align}\label{cylinder divergence estimate 3}
        &-\int_{\E_k(i)^{(1)}\cap\pas Q} e_{n+1}\cdot\nu_Q \,d\h= \h\big(\E_k(i)\cap (D_\p(0)\times\{y_i\})\big) - \h\big(\E_k(i)\cap (D_\p(0)\times\{y_j\})\big) \nonumber\\
        &\geq \Big(1 - \frac{\s}{(1-4\s^2)^\frac{n}{2}}\Big)\h\big(D_\p(0)\big) - \h\big((D_\p(0)\times\{y_j\})\setminus \E_k(j)^{(1)}\big)\nonumber\\
        &\geq \Big(1 - \frac{2\s}{(1-4\s^2)^\frac{n}{2}}\Big)\h\big(D_\p(0)\big).
    \end{align}
    Plugging \eqref{cylinder divergence estimate 1}, \eqref{cylinder divergence estimate 2} and \eqref{cylinder divergence estimate 3} into \eqref{cylinder divergence equation}, we conclude that
    \begin{equation*}
        \h\big(\pas\E_k(i) \cap Q\big) \geq \Big(1 - \frac{3\s}{(1-4\s^2)^\frac{n}{2}}\Big)\h\Big(D_\p(0)\Big)
    \end{equation*}
    As the identical estimate holds on $\E_k(j)$, because $\E_k$ is a wet $N$-cluster we see that
    \begin{equation}\label{minimum perimeter in limit}
        \mu_k\big(B_r(0)\big)\ge\h\big(\pas\E_k(N+1) \cap Q\big) \geq \Big(2 - \frac{6\s}{(1-4\s^2)^\frac{n}{2}}\Big)\Big(1-4\s^2\Big)^\frac{n}{2}\h\Big(D_r(0)\Big)
    \end{equation}
    As $\mu$ is a finite measure, $\mu\big(\overline{B_r(0)}\big) = \mu\big(B_r(0)\big)$ for almost every $r.$ Thus \eqref{minimum perimeter in limit} we may find a sequence $r_h$ such that 
    \begin{equation}\label{lower bound on upper density sequence}
        \frac{\mu\big(B_{r_h}(0)\big)}{\h\big(D_{r_h}(0)\big)} \geq \limsup_{k\to\infty}\frac{\mu_k(B_{r_h}(0))}{\h\big(D_{r_h}(0)\big)}\geq \Big(2 - \frac{6\s}{(1-4\s^2)^\frac{n}{2}}\Big)\Big(1-4\s^2\Big)^\frac{n}{2}
    \end{equation}
    Sending $h\to\infty$, and then $\sigma\to 0$ in that order we have that
    \begin{equation}\label{upper density at least two}
        \t^*_n(\mu)(0) \geq 2.
    \end{equation}
    This is the desired result for this part of the proof.
      
    \medskip
  
    \noindent {\it Step four}: We now show that there is a compact $\h$-rectifiable set $K = {\rm supp}(\mu)$ such that $\mu = \theta \h\llcorner K.$ This step is almost identical to step four from \cite{King_2022}. We begin by picking $x\in K$ defining the sequence of functions
    \begin{equation*}
        f_k(r) = \mu_k\big(B_r(x)\big),\,\,\,\,\,\,\,\,\,\,\,\,\,\,\,\,\,\,\,\, f(r) = \mu\big(B_r(x)\big).
    \end{equation*}
    If $f'_k$ is the classical derivative of $f_k$ and $Df_k$ is the distributional one, by \cite{delellis2014directapproachplateausproblem}
    \begin{gather}\label{density function properties}
        f_k\to f \text{ for a.e. } r,\quad Df_k\geq f_k',\quad Df\geq f',\quad f'\geq\liminf_{k\to\infty} f_k',\\
        f_k'(r)\ge\H^{n-1}\big(\pa B_r(x)\cap\pas\E_k(N+1)\big).\nonumber
    \end{gather}
    In order to make full use of these properties, we want to introduce the cup and cone competitors to get estimates on $f.$ Up to modifying our subsequence, we can assume that
    \begin{equation*}
        P\big(\E_k(N+1)\big) \leq \psi_\e + \frac{1}{k}
    \end{equation*}
    Let $\F^k$ denote either the cone competitor $\F^k_{x,r}$ or a cup competitor $\F^k_{\eta,x,r}$ to $\E_k$ in $B_r(x)$, with $r < r_*$, where $r_*$ is a parameter which can be shrunk to ensure $d(\F^k,\E)$ is small enough that we may apply a volume fixing variation for large $k$ (see \cite{Maggi_Book} Theorem 29.14). In this circumstance, it holds that
    \begin{equation}\label{volume fixing variation error}
        P\big(\E_k(N+1)\big) \leq P\big(\F^k(N+1)\big) + C_*\sum_{h=1}^{N+1}\big||\E_k(h)| - |\F^k(h)|\big|  + \frac{1}{k},
    \end{equation}
    where $C_*$ is uniform in $k$.

    We first use the cone competitor to obtain an upper bound on $\mu$. Consider the cone competitor $\F_{x,r}^k$ where $r$ is chosen so that each $\F_{x,r}^k$ is a wet $N$-cluster. Then \eqref{cone competitor estimate} and \eqref{volume fixing variation error} will immediately imply that 
    \begin{equation*}
        f_k(r) \leq \frac{r}{n}\H^{n-1}\big(\pas\F^k(N+1) \cap \pa B_r(x)\big) + C(N,n)r^{n+1} +\frac{1}{k}.
    \end{equation*}
    Taking $k\to\infty$, we will see that
    \begin{equation}\label{post liminf fr estimate for cone}
        f(r)\leq \frac{r}{n}f'(r) + Cr^{n+1}.
    \end{equation}
    In particular $\big(f(r)/r^n\big)' \geq -nC$ for a.e. $r<r_*$, so that $r\mapsto f(r)/r^n + nCr$ is non-decreasing and
    \begin{equation}\label{crude upper density bound}
        f(r) \leq \Big(\frac{f(r_*)}{r_*^n} + nCr_*\Big)r^n \qquad \text{for all } r<r_*.
    \end{equation}
    In particular $\mu(\{x\}) = 0$ for every $x\in\rn$.

    We next use the cup competitors to obtain a lower bound. As we can assume without loss of generality that $\F^k_{\eta}$ is a valid cup competitor for all $k$, we have by Lemma \ref{cup competitor main} that \eqref{volume fixing variation error} implies
    \begin{equation}\label{initial estimate on fr}
        f_k(r) \leq 2\h\big(\pa B_r(x)\setminus A_k\big) + C_*\sum_{h=1}^{N+1}\big||\E_k(h)| - |\F_\eta^k(h)|\big|  + \frac{1}{k}.
    \end{equation}
    We now estimate the error term. Up to further shrinking $r_*$ we may assume $\frac{C_*r_*}{n+1} < \frac{1}{2}$.\\
    If $\h(A_k \cap \E_k(N+1)) = 0$, then
        \begin{align*}
            C_*\sum_{h=1}^{N+1}\big||\E_k(h)| -|\F_\eta^k(h)|\big| &\leq C_*\big(2|B_r|+ \big||\E_k(N+1)| -|\F_\eta^k(N+1)|\big|\big)\\
            &\leq C_*\big(2|B_r| + r|B_1|^{\frac{1}{n+1}}|\E_k(N+1)\cap B_r(x)|^{\frac{n}{n+1}} + \om_nr^n\eta\big)\\
            &\leq C_*\big(2|B_r| + \frac{r_*}{n+1}P\big(\E_k(N+1)\cap B_r(x)\big)+ \om_nr^n\eta\big)\\
            &\leq \frac{1}{2}\Big(f_k(r) + \h\big(\pa B_r(x)\setminus A_k\big)\Big) +C_*\Big(2|B_r| + \om_nr^n\eta\Big).
        \end{align*}
    If instead $A_k\hnsub\E_k(N+1),$ then
        \begin{align*}
            C_*\sum_{h=1}^{N+1}\big||\E_k(h)| - |\F_\eta^k(h)|\big| &\leq C_*\big(2|B_r|+ \big||B_r(x)\setminus\E_k(N+1)| -|B_r(x)\setminus\F_\eta^k(N+1)|\big|\big)\\
            &\leq C_*\big(2|B_r| + r|B_1|^{\frac{1}{n+1}}|B_r(x)\setminus\E_k(N+1)|^{\frac{n}{n+1}} + \om_nr^n\eta\big)\\
            &\leq C_*\big(2|B_r| + \frac{r_*}{n+1}P\big(B_r(x)\setminus\E_k(N+1)\big)+ \om_nr^n\eta\big)\\
            &\leq \frac{1}{2}\Big(f_k(r) + \h\big(\pa B_r(x)\setminus A_k\big)\Big) + C_*\Big(2|B_r| + \om_nr^n\eta\Big).
        \end{align*}
    In either case, taking $\eta\to0$ will give that 
    \begin{equation}\label{cup width goes to 0}
        C_*\sum_{h=1}^{N+1}\big||\E_k(h)| - |\F_\eta^k(h)|\big| \leq \frac{1}{2}\Big(f_k(r) + \h\big(\pa B_r(x)\setminus A_k\big)\Big) + c(n)r^{n+1}.
    \end{equation}
    Applying \eqref{cup width goes to 0} to \eqref{initial estimate on fr} yields
    \begin{equation}\label{pre liminf cup estimate}
        f_k(r) \leq 5\h(\pa B_r(x)\setminus A_k) + 2c(n)r^{n+1} + \frac{2}{k}.
    \end{equation}
    Assume first that $n\geq 2$. Applying Lemma \ref{spherical isoperimetry}, we see that
    \begin{equation}\label{spherical isoperimetry on the cup competitor}
        \h(\pa B_r(x)\setminus A_k) \leq  C(n)\H^{n-1}\big(\pa B_r\cap\pas\E_k(N+1)\big)^{\frac{n}{n-1}}\leq C(n)f_k'(r)^{\frac{n}{n-1}}
    \end{equation}
    Then \eqref{density function properties}, \eqref{spherical isoperimetry on the cup competitor} and \eqref{pre liminf cup estimate} will imply that
    \begin{align*}
        f(r) = \lim_{k\to\infty} f_k(r) &\leq \liminf_{k\to\infty} 5C(n)f_k'(r)^{\frac{n}{n-1}} + 2c(n)r^{n+1} + \frac{2}{k} \\&\leq 5C(n)f'(r)^{\frac{n}{n-1}} + 2c(n)r^{n+1}.
    \end{align*}
    By a standard ODE argument, this is sufficient to conclude that there exists a $\t_0>0$ such that for any $x\in K$, and almost every $r<r_*$, it must hold that 
    \begin{equation*}
        \mu\big(B_r(x)\big) \geq \t_0\om_nr^{n}.
    \end{equation*}

    \noindent {\it The case $n=1$}: Lemma \ref{spherical isoperimetry} is not available when $n=1$, so we argue directly. For a.e. $r<r_*$ the set $J_k=\pa B_r(x)\cap\pas\E_k(N+1)$ is finite and $f_k'(r)\geq\H^0(J_k)$. Since $\pas\E_k(N+1)$ is $\H^1$-equivalent to a countable union of closed Jordan curves, a generic circle meets it in an even number of points, so for a.e. $r$ either $\H^0(J_k)\geq 2$, whence $f_k'(r)\geq 2$, or $J_k=\emptyset$. In the latter case $\pa B_r(x)$ is $\H^1$-contained in a single chamber $\E_k(h)$, and every other chamber meets $B_r(x)$ in a set enclosed by $\pas\E_k(N+1)\cap B_r(x)$; by the planar isoperimetric inequality,
    \begin{equation*}
        \sum_{h'\neq h}|\E_k(h')\cap B_r(x)|\leq \frac{f_k(r)^2}{4\pi}.
    \end{equation*}
    The cup competitor of case (i) with $Y=\emptyset$ (that is, $\F^k(h)=\E_k(h)\cup B_r(x)$ and $\F^k(h')=\E_k(h')\setminus B_r(x)$ for $h'\neq h$) is a wet $N$-cluster with $P(\F^k(N+1))=P(\E_k(N+1))-f_k(r)$, so \eqref{volume fixing variation error} gives
    \begin{equation*}
        f_k(r)\leq \frac{C_*N}{4\pi}f_k(r)^2+\frac1k .
    \end{equation*}
    Hence, for a.e. $r<r_*$, either $f_k'(r)\geq2$, or $f_k(r)\leq 2/k$ provided $f_k(r)\leq 2\pi/(C_*N)$. By \eqref{crude upper density bound} we may shrink $r_*$ so that $f(r)<2\pi/(C_*N)$ for $r<r_*$, and passing to the limit via \eqref{density function properties} we find that for a.e. $r<r_*$ either $f'(r)\geq2$ or $f(r)=0$. As $x\in K=\mathrm{supp}\,\mu$, the second alternative never occurs, and integrating gives $f(r)\geq 2r=\om_1 r$ for $r<r_*$. Thus the lower bound above holds with $\t_0=1$ when $n=1$.

    In all cases we thus conclude
    \begin{equation}\label{lower bound for Preiss}
        \mu\geq \t_0\h\llcorner K.
    \end{equation}
    Combining \eqref{density function properties}, \eqref{lower bound for Preiss}, and \eqref{post liminf fr estimate for cone} we see that
    \begin{align}\label{monotonicity computation}
        D(\frac{e^{\Lambda r}f(r)}{r^n}) &= \frac{ne^{\Lambda r}}{r^{n+1}}\Big(\frac{r}{n}Df + f(r)(\frac{\Lambda r}{n} -1)\Big)\geq \frac{ne^{\Lambda r}}{r^{n+1}}\Big(f(r) - Cr^{n+1} + f(r)(\frac{\Lambda r}{n} -1)\Big) \nonumber\\&=\frac{ne^{\Lambda r}}{r^{n}}\Big( -Cr^n + \frac{\Lambda f(r)}{n}\Big) \geq ne^{\Lambda r}\Big(\frac{\Lambda\t_0\om_n}{n} -C\Big).
    \end{align}
    In other words, if $\Lambda\geq \frac{nC}{\t_0\om_n}$ then $e^{\Lambda r}\frac{f(r)}{r^{n}}$ is non-decreasing. This alongside \eqref{lower bound for Preiss} implies that $\t(x)$ exists in $(0,\infty)$ for every $x\in K$. Thus Preiss' theorem implies that $\mu = \theta\h\llcorner K$ where $\t$ is upper semicontinuous and $K$ is countably rectifiable.
    
    \medskip
  
    \noindent {\it Step five}: Given an $x\in K$ such that $K$ admits an approximate tangent plane, we show the following estimates on $\t$. If $x\in\pas \E(N+1)$ then we show $\t(x) \leq 1$. If $x\in\E(N+1)^{(0)}$ or $x\in\E(N+1)^{(1)}$, then we show that $\t(x)\leq 2$. Choose $\nu$ such that $\nu^{\perp} = T_xK$, $\tau\in(0,1)$, $\sigma\in(0,\tau),$ and define the following sets:
    \begin{align}\label{slab sub-parts}
        S_{\tau,r} &= \{y\in B_r(x):|(y-x)\cdot\nu|<\tau r\}\\
        V_{\s,r} &= \{y\in B_r(x):|(y-x)\cdot \nu|<\s|y-x|\}\nonumber\\
        W_{\tau,\s,r}^\pm &=\{y\in B_r(x):(y-x)\cdot\nu \gtrless 0\} \cap \big(S_{\tau,r}\setminus\overline{V}_{\s,r}\big)\nonumber\\
        \Gamma^{\pm}_{\tau,\s,r} &= \pas S_{\tau,r} \cap \pas W_{\tau,\s,r}^\pm.\nonumber
    \end{align}
    We observe that \eqref{lower bound for Preiss}, $\h\llcorner \frac{K-x}{\rho} \weakstar \h\llcorner T_x K$ and the portmanteau theorem will imply the existence of an $r_0 = r_0(\sigma,x) >0$ such that
    \begin{equation}\label{classical tangent plan is in cone for slab}
        K\cap B_r(x) \subset V_{\s,r}\cup\{x\}.
    \end{equation}
    In particular, for $r<r_0$, it holds that
    \begin{equation}
        \mu(S_{\tau,r}) = \mu(B_r).
    \end{equation}
    Next we define $A_{r,k}^{\rm in}$ and $A_{r,k}^{\rm out}$ to be the $\h$-maximal elements of the sets
    \begin{align*}
        \A^{\rm in}_{r,k} &= \{A\subset \pa S_{\tau,r} : \h(A\setminus\E_k(N+1)) = 0, A \,\,{\rm indecomposable}\}\\
        \A^{\rm out}_{r,k} &= \{A\subset \pa S_{\tau,r} : \h(A\cap\E_k(N+1)) = 0, A \,\,{\rm indecomposable}\}.
    \end{align*}
    These give rise to the slab competitors $\F^*_k \coloneqq \mathcal{S}_{\eta,\tau,x,r,A_{r,k}^*}^\nu$ for $*\in\{{\rm in,out}\}$ as defined in Section \ref{subsec:slab}. Furthermore, \eqref{slab competitor main estimate 1} and \eqref{slab competitor main estimate 2} will imply that 
    \begin{equation}\label{slab in estimates}
        \limsup_{\eta\to0}\h(S_{\tau,r}\cap\pas\F^{\rm in}_k(N+1))\leq C(n,\tau)
            \h((\pa S_{\tau,r}\cap\E_k(N+1))\setminus A_{r,k}^{\rm in}),
    \end{equation}

    \begin{equation}\label{slab out estimates}
        \limsup_{\eta\to0}\h(S_{\tau,r}\cap\pas\F^{\rm out}_k(N+1))\leq C(n,\tau) \h(\pa S_{\tau,r}\setminus(\E_k(N+1)\cup A_{r,k}^{\rm out})).
    \end{equation}

    If $r_0$ is sufficiently small, we may apply \eqref{volume fixing variation error} to deduce 
    \begin{equation}\label{slab estimate with volume fixing}
        \h(S_{\tau,r}\cap\pas\E_k(N+1)) \leq \h(S_{\tau,r}\cap\F^*_{k}) + C_*c(n,N) + \frac{1}{k}, \,\, {\rm for}\,\, * = \{{\rm in,out}\},
    \end{equation}

   as $\h(\pa S_{\tau,r}\cap\pas\F_k) = 0$ for almost every $r$. Then combining \eqref{slab in estimates}, \eqref{slab out estimates} and \eqref{slab estimate with volume fixing}, we see that by first taking $\eta\to0$, and then taking $k\to\infty$, we find that for $* =$ in,
    \begin{align}\label{in estimate}
        \mu(B_r(x)) &\leq \limsup_{k\to\infty}\h(\pa S_{\tau,r}\setminus \E_k(N+1))\\
        &+ C(n,\tau)\limsup_{k\to\infty}\h((\pa S_{\tau,r}\cap \E_k(N+1))\setminus A^{\rm in}_{r,k}) + C_* c(n,N)r^{n+1}.\nonumber
    \end{align}
    Similarly, when $* =$ out, we see that
    \begin{align}\label{out estimate}
        \mu(B_r(x)) &\leq \limsup_{k\to\infty}\h(\pa S_{\tau,r}\cap \E_k(N+1))\\
        &+ C(n,\tau)\limsup_{k\to\infty}\h(\pa S_{\tau,r}\setminus (\E_k(N+1)\cup A^{\rm out}_{r,k})) + C_* c(n,N)r^{n+1}\nonumber.
    \end{align}
    We now discuss the cases $x\in\pas \E(N+1)$, $x\in\E(N+1)^{(0)}$ and $x\in\E(N+1)^{(1)}$ separately.

    \medskip

    \noindent \textit{The case} $x\in\pas\E(N+1)$: First we observe by the divergence theorem and \eqref{classical tangent plan is in cone for slab} that \begin{equation}\label{measures on the Wpm sets}
        |W^+_{\tau,\s,r_0}\cap \E(N+1)| = 0.
    \end{equation}
    Thus we may apply the dominated convergence theorem, coarea formula, and Fatou's lemma in that order to \eqref{measures on the Wpm sets} to see that
    \begin{align}
        0 = \lim_{k\to\infty}|W^+_{\tau,\s,r_0}\cap \E_k(N+1)| &= \lim_{k\to\infty}\int_0^{r_0}\h(\pa S_{\tau,r}\cap W^+_{\tau,\s,r_0}\cap \E_k(N+1))\,dr\\
        &\geq \int_0^{r_0}\liminf_{k\to\infty} \h(\Gamma^+_{\tau,\sigma,r}\cap \E_k(N+1)).\nonumber
    \end{align}
    This along with a similar computation for $W^-_{\tau,\s,r}\setminus\E(N+1)$, allow us to conclude that for almost every $r<r_0$, 
    \begin{align}
        &\lim_{k\to\infty}\h(\Gamma^+_{\tau,\sigma,r}\cap \E_k(N+1)) = 0\label{estimate on gamma plus}\\
        &\lim_{k\to\infty}\h(\Gamma^-_{\tau,\sigma,r}\setminus \E_k(N+1)) = 0.\label{estimate on gamma minus} 
    \end{align}
    By \eqref{estimate on gamma minus}, \eqref{estimate on gamma plus}, and that 
    \begin{equation}\label{decomposition of slab boundary in terms of smaller slabs}
        \pa S_{\tau,r} = \Gamma^+_{\tau,\s,r}\cup\Gamma^-_{\tau,\s,r}\cup(\pa S_{\tau,r}\cap\pa S_{\s,r}),
    \end{equation}
    we have that 
    \begin{align}\label{circle slab estimate red bdry}
        |\h(\pa S_{\tau,r}\cap \E_k(N+1))-\om_nr^n|&\leq C(n)\s r^n + |\h(\Gamma^-_{\tau,\s,r}\cap\E_k(N+1))-\om_nr^n| + o(1)\nonumber\\
        &\leq C(n)\sigma r^n + |\h(\Gamma^-_{\tau,\s,r}) -\om_nr^n| + o(1)\nonumber\\
        &\leq C(n)\tau r^n + o(1).
    \end{align}
    Next, we assume that without loss of generality that $r_0 = r_0(\s,x)$ is chosen so that $\h(K\cap \pa B_{r_0}(x)) = 0$. Then by \eqref{classical tangent plan is in cone for slab}, we see that
    \begin{align*}
        0 = \mu(K\cap \overline{B_{r_0}}(x)\setminus V_{\s,r_0}) &\geq \lim_{k\to\infty}\h(S_{\tau,r_0}(x)\cap\pas\E_k(N+1)\setminus V_{\s,r_0})\\
        &=\lim_{k\to\infty}\int_0^{r_0}\H^{n-1}(\pa S_{\tau,r}\cap\pas\E_k(N+1)\setminus V_{\s,r_0})dr.
    \end{align*}
    Thus for almost any $r < r_0$, we see that
    \begin{equation*}
        \lim_{k\to\infty}\H^{n-1}(\pa S_{\tau,r}\cap\pas\E_k(N+1)\setminus V_{\s,r}) = 0,
    \end{equation*}
    which implies that 
    \begin{equation}\label{gamma plus red bdry vanishes}
        \lim_{k\to\infty}\H^{n-1}(\Gamma^+_{\tau,\s,r}\cap \pas\E_k(N+1)) = 0.
    \end{equation}
    Using Lemma \ref{spherical isoperimetry} and the fact that $\Gamma^+_{\tau,\s,r}$ is a bi-Lipschitz image of a spherical cap, we may further deduce that if
    \begin{equation*}
        A^+_{r,k} \text{ is an $\h$-maximal indecomposable component of } \Gamma^+_{\tau,\s,r}\setminus\E_k(N+1),
    \end{equation*}
    then we have that
    \begin{equation}\label{spherical isoperimetry in reduced bdry slab}
        \lim_{k\to\infty}\h(\Gamma^+_{\tau,\s,r}\setminus A^+_{r,k}) = 0.
    \end{equation}
    By the $\h$-maximality of $A^{\rm out}_{r,k}$ and \eqref{spherical isoperimetry in reduced bdry slab}, it must eventually hold that up to sets of $\h$-measure 0, $A^+_{r,k} \subset A^{\rm out}_{r,k}$. Thus we see that
    \begin{equation}\label{improved red bdry slab}
        \lim_{k\to\infty}\h(\Gamma^+_{\tau,\s,r}\setminus A^{\rm out}_{r,k}) = 0.
    \end{equation}
    Combining \eqref{estimate on gamma minus}, \eqref{decomposition of slab boundary in terms of smaller slabs} and \eqref{improved red bdry slab}, we see that
    \begin{equation*}
        \limsup_{k\to\infty}\h\big(\pa S_{\tau,r}\setminus (A^{\rm out}_{r,k}\cup\pas\E_k(N+1))\big) \leq \h(\pa S_{\tau,r}\cap\pa S_{\s,r}) \leq C(n)\s r^n.
    \end{equation*}
    This in conjunction with \eqref{out estimate} and \eqref{circle slab estimate red bdry} shows that 
    \begin{equation}\label{reduced boundary final density estimate}
        \frac{\mu(B_r(x))}{r^n}\leq \om_n +  C(n)\tau + C(n,\tau)\s + C_*c(n,N)r.
    \end{equation}
    Taking $r\to0, \s\to0$ and then $\tau\to0$ will imply that $\t(\mu)(x) \leq 1$.

    \medskip

    \noindent \textit{The case} $x\in\E(N+1)^{(0)}$:
    By \eqref{classical tangent plan is in cone for slab}, the divergence theorem and that $\pas\E(N+1)\subset K$ we see that 
    \begin{equation*}
        \H^{n+1}(\E(N+1)\cap B_{r_0}(x)\setminus V_{\s,r_0}) = 0.
    \end{equation*}
    By the dominated convergence, coarea formula, Fatou's lemma and the above, we see that
    \begin{equation*}
        0 = \lim_{k\to\infty}\h\big((\E_k(N+1)\setminus V_{\s,r})\cap \pa S_{\tau,r}\big) = \lim_{k\to\infty}\h\big(\E_k(N+1)\cap(\Gamma_{\tau,\s,r}^+\cup \Gamma_{\tau,\s,r}^-)\big).
    \end{equation*}
    By \eqref{decomposition of slab boundary in terms of smaller slabs} and the above we see that
    \begin{equation}\label{zero density slab boundary estimate}
        \h(\E_k(N+1)\cap\pa S_{\tau,r}) = \h(\pa S_{\tau,r} \cap \pa S_{\s,r}) + o(1) \leq C(n)\s r^n + o(1).
    \end{equation}
    Similarly, 
    \begin{align}\label{circle slab estimate zero density}
        |\h(\pa S_{\tau,r}\setminus \E_k(N+1)) -2\om_nr^n| &\leq  \h(\pa S_{\tau,r} \cap \pa S_{\s,r}) + |\h(\Gamma^+_{\tau,\s,r}\cup\Gamma_{\tau,\s,r}^-) - 2\om_nr^n|
        \nonumber\\ &\quad + o(1) \leq C(n)\tau r^n + o(1).
    \end{align}
    Combining \eqref{in estimate}, \eqref{zero density slab boundary estimate} and \eqref{circle slab estimate zero density} in the same manner as how \eqref{reduced boundary final density estimate} was derived, we conclude $\theta(\mu)(x) \leq 2$.
    
    \medskip

    \noindent \textit{The case} $x\in\E(N+1)^{(1)}$: Similar to the density 0 case, we have that 
    \begin{equation*}
        |B_{r_0}(x)\setminus(\E(N+1) \cup V_{\s,r})| = 0.
    \end{equation*}
    Again by dominated convergence, the coarea formula, Fatou's lemma and the above we see that 
    \begin{equation*}
        \lim_{k\to\infty}\h((\Gamma^+_{\tau,\s,r}\cup\Gamma^-_{\tau,\s,r})\setminus\E_k(N+1)) = 0.
    \end{equation*}
    Applying the identical logic of the density 0 case to \eqref{out estimate}, we again conclude that $\t(\mu)(x) \leq 2$.
    
    \medskip
  
    \noindent {\it Step six}: By Proposition 12.19 of \cite{Maggi_Book} (see Remark \ref{normalization remark} below), we may modify a cluster $\E$ on sets of measure zero to ensure that $\overline{\pas\E(i)} = \pa\E(i)$ for all $i$. In this setting, we show that if $x\in K\cap\E(h)^{(1)}$ for some $h\in\{0,\dots,N+1\}$ and $K$ admits an approximate tangent plane at $x$, then $\t(\mu)(x)=0$. Since $K$ is countably $\h$-rectifiable and, by Federer's theorem, $\h$-a.e. $x\in\rn\setminus\bigcup_{h}\pas\E(h)$ belongs to $\E(h)^{(1)}$ for exactly one $h\in\{0,\dots,N+1\}$, this shows that $\mu$ is concentrated on $\bigcup_h\pas\E(h)$. To do so we expand on step five by defining $B_{r,k}^{\rm in}$ and $B_{r,k}^{\rm out}$ to be $\h$-maximal components of $\A_{r,k}^{\rm in}\setminus\{A_{r,k}^{\rm in}\}$ and $\A_{r,k}^{\rm out}\setminus\{A_{r,k}^{\rm out}\}$ respectively. Then by Remark \ref{multi component wet slab}, we may improve \eqref{in estimate} to
    \begin{align}\label{multi in estimate}
        \mu(B_r(x)) &\leq \limsup_{k\to\infty}\h(\pa S_{\tau,r}\setminus \E_k(N+1))\\
        &+ C(n,\tau)\limsup_{k\to\infty}\h((\pa S_{\tau,r}\cap \E_k(N+1))\setminus (A^{\rm in}_{r,k}\cup B^{\rm in}_{r,k})) + C_* c(n,N)r^{n+1}\nonumber.
    \end{align}
    The other case is a bit more delicate, but if there is an $h$ such that, for $k$ large,
    \begin{equation}\label{containment condition for bout}
        \h((A_{r,k}^{\rm out} \cup B_{r,k}^{\rm out})\setminus \E_k(h)) = 0,
    \end{equation} then we may improve \eqref{out estimate} to
    \begin{align}\label{multi out estimate}
        \mu(B_r(x)) &\leq \limsup_{k\to\infty}\h(\pa S_{\tau,r}\cap \E_k(N+1))\\
        &+ C(n,\tau)\limsup_{k\to\infty}\h(\pa S_{\tau,r}\setminus (\E_k(N+1)\cup A^{\rm out}_{r,k} \cup B_{r,k}^{\rm out})) + C_* c(n,N)r^{n+1}\nonumber.
    \end{align}
    As in step five, there is casework depending on which chamber $x$ lies in.

    \medskip

    \noindent \textit{A preliminary estimate}: Let $x\in K\cap\E(h)^{(1)}$ for some $h\in\{0,\dots,N+1\}$, and let $\nu$, $\tau$, $\s$, $S_{\tau,r}$, $V_{\s,r}$ and $r_0=r_0(\s,x)$ be as in step five, where we now also assume $\tau<1/2$. We claim that, up to decreasing $r_0$,
    \begin{equation}\label{density one volume estimate}
        |B_r(x)\setminus\E(h)|\le C(n)\,\s\,r^{n+1}\qquad\text{for every } r<r_0.
    \end{equation}
    Indeed, let $U^\pm_r=\{y\in B_r(x):\pm(y-x)\cdot\nu>\s|y-x|\}$, so that $B_r(x)\setminus(U^+_r\cup U^-_r)=\{y\in B_r(x):|(y-x)\cdot\nu|\le\s|y-x|\}$ has volume at most $C(n)\s r^{n+1}$, while $|U^\pm_r|\ge c(n)r^{n+1}$ as $\s<1/2$. By \eqref{classical tangent plan is in cone for slab}, $U^\pm_r\cap K=\emptyset$ for $r<r_0$. By step two (whose argument applies verbatim to $h'=0$) and the normalization $\pa\E(h')=\overline{\pas\E(h')}$, we have $\pa\E(h')={\rm spt}\,\mu_{\E(h')}\subset K$ for every $h'=0,\dots,N+1$. Hence $\mu_{\E(h')}\llcorner U^\pm_r=0$, so that $1_{\E(h')}$ is a.e. constant on each of the connected open sets $U^\pm_r$ (see \cite[Lemma 7.5]{Maggi_Book}); since the chambers partition $\rn$ up to null sets, each $U^\pm_r$ is Lebesgue-a.e. contained in a single chamber. As $x\in\E(h)^{(1)}$, we have $|B_r(x)\setminus\E(h)|=o(r^{n+1})<c(n)r^{n+1}\le|U^\pm_r|$ for $r$ small, which forces $|U^\pm_r\setminus\E(h)|=0$ for $r<r_0$, and \eqref{density one volume estimate} follows. Next, let $g(y)=\max\{|y-x|,|(y-x)\cdot\nu|/\tau\}$, so that $\{g<r\}=S_{\tau,r}$ and $\{g=r\}=\pa S_{\tau,r}$. As the maximum of two Lipschitz functions with constants $1$ and $1/\tau$, $g$ is Lipschitz with $\Lip(g)\le1/\tau$, so that $|\nabla g|\le1/\tau$ a.e. and the coarea formula for Lipschitz functions gives, for every measurable $F\subset\rn$ and $\rho<r_0$,
    \begin{equation}\label{slab coarea}
        \int_0^{\rho}\h(\pa S_{\tau,r}\cap F)\,dr=\int_{F\cap S_{\tau,\rho}}|\nabla g|\,dy\le\frac1\tau\,|F\cap S_{\tau,\rho}|.
    \end{equation}
    Applying \eqref{slab coarea} with $F=\rn\setminus\E_k(h)$, restricting the integral to $(\rho/2,\rho)$, and then using Fatou's lemma on the left and $\E_k\to\E$, $S_{\tau,\rho}\subset B_\rho(x)$ and \eqref{density one volume estimate} on the right, we find, with $f(r):=\liminf_{k\to\infty}\h(\pa S_{\tau,r}\setminus\E_k(h))$,
    \begin{equation}\label{slab integral bound}
        \int_{\rho/2}^{\rho}f(r)\,dr\le\liminf_{k\to\infty}\frac{1}{\tau}\,|S_{\tau,\rho}\setminus\E_k(h)|=\frac{1}{\tau}\,|S_{\tau,\rho}\setminus\E(h)|\le\frac{C(n)}{\tau}\,\s\,\rho^{n+1}.
    \end{equation}
    Setting $M=4C(n)\tau^{-1}\s\rho^n$, Chebyshev's inequality and \eqref{slab integral bound} give $|\{r\in(\rho/2,\rho):f(r)>M\}|\le M^{-1}\int_{\rho/2}^\rho f\,dr\le\rho/4$, so that $f\le M$ on a subset of $(\rho/2,\rho)$ of measure at least $\rho/4$. Since $\rho<2r$ for $r\in(\rho/2,\rho)$, we have $M\le 2^{n+2}C(n)\tau^{-1}\s\,r^n=:C(n,\tau)\,\s\,r^n$ there. Hence the set $G_\s$ of those $r<r_0$ satisfying the almost-everywhere conditions of step five and
    \begin{equation}\label{density one slab boundary estimate}
        \liminf_{k\to\infty}\h(\pa S_{\tau,r}\setminus\E_k(h))\le C(n,\tau)\,\s\,r^n
    \end{equation}
    has $|G_\s\cap(\rho/2,\rho)|\ge\rho/4$ for every $\rho<r_0$; in particular $0$ is an accumulation point of $G_\s$. In the two cases below we fix $r\in G_\s$ and, passing to a subsequence in $k$ (along which all the estimates of step five remain valid), we assume that the $\liminf$ in \eqref{density one slab boundary estimate} is a limit. Since $\t(\mu)(x)$ exists by step four, in order to prove $\t(\mu)(x)=0$ it suffices to show
    \begin{equation}\label{density one target estimate}
        \mu(B_r(x))\le C(n)\tau r^n + C(n,\tau)\,\s\,r^n+C_*c(n,N)r^{n+1}\qquad\text{for every } r\in G_\s,
    \end{equation}
    and then let $r\to0$ in $G_\s$, $\s\to0$ and finally $\tau\to0$.

    \medskip

    \noindent \textit{The case} $x\in\E(N+1)^{(1)}$:
    For $r\in G_\s$, \eqref{density one slab boundary estimate} reads
    \begin{equation}\label{interior slab limit estimate}
        \lim_{k\to\infty}\h(\pa S_{\tau,r} \setminus\E_{k}(N+1)) \le C(n,\tau)\,\s\,r^n,
    \end{equation}
    which implies
    \begin{equation}\label{gamma minus liquid vanishes}
        \limsup_{k\to\infty}\h(\Gamma^+_{\tau,\s,r}\setminus\E_k(N+1)) + \limsup_{k\to\infty}\h(\Gamma^-_{\tau,\s,r}\setminus\E_k(N+1)) \le C(n,\tau)\,\s\,r^n.
    \end{equation}
    As it still holds for almost every $r<r_0$ that $\mu(K \cap \overline{B_{r_0}}(x)\setminus V_{\s,r_0})=0$, we may apply the identical process that derived \eqref{gamma plus red bdry vanishes} to $\Gamma^+_{\tau,\s,r}$ and $\Gamma^-_{\tau,\s,r}$ to conclude that
    \begin{equation}\label{gamma pm red bdry vanishes}
        \lim_{k\to\infty}\H^{n-1}(\Gamma^+_{\tau,\s,r}\cap\pas\E_k(N+1)) = \lim_{k\to\infty}\H^{n-1}(\Gamma^-_{\tau,\s,r}\cap\pas\E_k(N+1)) = 0.
    \end{equation}
    By applying Lemma \ref{spherical isoperimetry} to the above, we may further conclude that if 
    \begin{align*}
        &A^+_{r,k} \text{ is an $\h$-maximal indecomposable component of } \Gamma^+_{\tau,\s,r}\setminus\pas\E_k(N+1)\\
        &A^-_{r,k} \text{ is an $\h$-maximal indecomposable component of } \Gamma^-_{\tau,\s,r}\setminus\pas\E_k(N+1),
    \end{align*}
    then
    \begin{align}\label{Gamma plus is overwhelmed}
        \lim_{k\to\infty}\h(\Gamma^+_{\tau,\s,r}\setminus A^+_{r,k}) = 0.\\\label{Gamma minus is overwhelmed} \lim_{k\to\infty}\h(\Gamma^-_{\tau,\s,r}\setminus A^{-}_{r,k}) = 0.
    \end{align}
    Since $A^\pm_{r,k}$ is indecomposable and disjoint from $\pas\E_k(N+1)\supset\bigcup_{h'}\pas\E_k(h')$, it is $\h$-contained in a single chamber of $\E_k$. Moreover, $\Gamma^\pm_{\tau,\s,r}$ contains the flat face $\{(y-x)\cdot\nu=\pm\tau r\}\cap B_r(x)$ (recall $\s<\tau$), so that $\h(\Gamma^\pm_{\tau,\s,r})\ge\om_n(1-\tau^2)^{n/2}r^n\ge c(n)r^n$, and \eqref{Gamma plus is overwhelmed}, \eqref{Gamma minus is overwhelmed} give $\h(A^\pm_{r,k})\ge c(n)r^n/2$ for $k$ large. If $\s<\s_0(n,\tau)$, this is incompatible with \eqref{interior slab limit estimate} unless $A^\pm_{r,k}$ is $\h$-contained in $\E_k(N+1)$. Hence, for large enough $k$,
    \begin{equation}\label{the good containment}
        A^+_{r,k}, A^-_{r,k} \subset \E_k(N+1).
    \end{equation}
    Observe that by choosing our initial $\s$ small enough, we may ensure 
    \begin{equation}\label{gammas are big}
        \h(\pa S_{\tau,r} \setminus (\Gamma^+_{\tau,\s,r} \cup \Gamma^-_{\tau,\s,r})) \leq \frac{1}{2}\h(\Gamma^+_{\tau,\s,r}).
    \end{equation}
    Thus we assume without loss of generality, $\s$ starts small enough such that \eqref{gammas are big} holds. Then, \eqref{Gamma plus is overwhelmed} and \eqref{Gamma minus is overwhelmed} together imply for large enough $k$,
    \begin{equation}\label{A plus minus is big too}
        \h(\pa S_{\tau,r} \setminus(A_{r,k}^+ \cup A_{r,k}^-)) \leq \min\{\h(A^+_{r,k}),\h(A^-_{r,k})\}.
    \end{equation}
    The $\h$-maximality of $A^{\rm in}_{r,k}$ and $B^{\rm in}_{r,k}$, \eqref{gamma minus liquid vanishes}, \eqref{the good containment} and \eqref{A plus minus is big too} imply for large enough $k$ 
    \begin{equation}\label{A pm inside A in B in}
        A^+_{r,k} \cup A_{r,k}^- \hnsub A_{r,k}^{\rm in} \cup B^{\rm in}_{r,k}.
    \end{equation}
    Thus it must hold that
    \begin{equation}\label{interior multi component slab in estimate}
        \lim_{k\to\infty}\h(\Gamma^{+}_{\tau,\s,r}\cup\Gamma^-_{\tau,\s,r}\setminus(A^{\rm in}_{r,k}\cup B^{\rm in}_{r,k})) = 0.
    \end{equation}
    Combining \eqref{decomposition of slab boundary in terms of smaller slabs}, \eqref{multi in estimate}, \eqref{interior slab limit estimate} and \eqref{interior multi component slab in estimate} we see that
    \begin{equation*}
        \mu(B_r(x))\leq C(n)\tau r^n + C(n,\tau)\s r^n + C_*c(n,N)r^{n+1}.
    \end{equation*}
    This is \eqref{density one target estimate}, so that $\theta(\mu)(x)=0$.

    \medskip

    \noindent \textit{The case} $x\in\E(h)^{(1)}$ \text{ for } $h\neq N+1$:
    In this case we take $r\in G_\s$, with $G_\s$ defined through \eqref{density one slab boundary estimate} for the chamber $\E(h)$.
    We may follow the previous case with only two minor modifications. The first is that we replace $\E(N+1)$ with $\E(h)$ in equations \eqref{interior slab limit estimate}, \eqref{gamma minus liquid vanishes} and \eqref{the good containment}. These equations via the verbatim reasoning become
    \begin{align}\label{new1}
        \lim_{k\to\infty}&\h(\pa S_{\tau,r} \setminus\E_{k}(h)) \le C(n,\tau)\,\s\,r^n,\\\nonumber
        &\limsup_{k\to\infty}\h(\Gamma^+_{\tau,\s,r}\setminus\E_k(h)) + \limsup_{k\to\infty}\h(\Gamma^-_{\tau,\s,r}\setminus\E_k(h)) \le C(n,\tau)\,\s\,r^n,\\\label{new3}
        &A^+_{r,k}, A^{-}_{r,k} \subset \E_k(h).
    \end{align}
    Observe that \eqref{new1} implies that
    \begin{equation}\label{first part of out estimate for interior}
        \lim_{k\to\infty} \h(\pa S_{\tau,r}\cap\E_k(N+1)) \le C(n,\tau)\,\s\,r^n.
    \end{equation}
    The second modification is that we must consider the set $\A^{\rm out}_{r,k}$. The same considerations taken in the previous case will allow us to conclude the analogues of \eqref{A pm inside A in B in} and \eqref{interior multi component slab in estimate}:
    \begin{align}\nonumber
        &A^+_{r,k} \cup A_{r,k}^- \hnsub A_{r,k}^{\rm out} \cup B^{\rm out}_{r,k},\\\label{interior multi slab out estimate}
         \lim_{k\to\infty}\h(&\Gamma^{+}_{\tau,\s,r}\cup\Gamma^-_{\tau,\s,r}\setminus(A^{\rm out}_{r,k}\cup B^{\rm out}_{r,k})) = 0.
    \end{align}
    Combining \eqref{decomposition of slab boundary in terms of smaller slabs}, \eqref{containment condition for bout} (which holds by \eqref{new3}, since $A^{\rm out}_{r,k}$ and $B^{\rm out}_{r,k}$ are indecomposable, disjoint from $\pas\E_k(N+1)$, and contain $A^+_{r,k}$ and $A^-_{r,k}$), \eqref{multi out estimate}, \eqref{new3}, \eqref{first part of out estimate for interior} and \eqref{interior multi slab out estimate} we obtain the identical density estimate
    \begin{equation*}
        \mu(B_r(x))\leq C(n)\tau r^n + C(n,\tau)\s r^n + C_*c(n,N)r^{n+1}.
    \end{equation*}
    This is again \eqref{density one target estimate}, so that $\theta(\mu)(x)=0$.
    
    \medskip
  
    \noindent {\it Step Seven:} We now may complete the theorem. Steps four and six imply that $\mu$ is concentrated on the set $$\bigcup_{h=1}^{N+1}\pas \E(h).$$ Step three, step five, and the above imply that $$\mu = 2\h\llcorner(\bigcup_{h=0}^N \pas \E(h)\setminus\pas\E(N+1)) + \h\llcorner(\pas\E(N+1)).$$ 
    Immediately, we may conclude $\psi_\e(v_1,\dots, v_N) = P^*_{\rm wet}(\E),$ proving the theorem. 
    
\end{proof}
\begin{remark}\label{normalization remark}
    Given a cluster $\E$ satisfying $$\h(\bigcup_{h=1}^{N+1} \pa\E(h)) < \infty,$$ we may define a new cluster $\E^*$ by
    \begin{align*}
        \E^*(h) &= \big(A_1(h)\cup\E(h)\big)\setminus A_0(h),\\
        A_1(h) &= \{x : \exists r>0 \text{ s.t. } |B_r(x)\cap \E(h)| = |B_r(x)|\},\\
        A_0(h) &= \{x : \exists r>0 \text{ s.t. } |B_r(x)\cap \E(h)| = 0\},
    \end{align*}
    as in the proof of \cite[Proposition 12.19]{Maggi_Book},
    such that $|\E(h)\Delta\E^*(h)| = 0$ and $\pa\E^*(h) = {\rm spt}\,\mu_{\E^*(h)} = \overline{\pas\E^*(h)}$. In other words, by the previous theorem we may assume that for all future minimizing clusters $\E$ each chamber is a Borel set satisfying $\overline{\pas \E(h)} = \pa\E(h)$. (The open set $A_1(h)$ alone is in general not Lebesgue equivalent to $\E(h)$, as $\E(h)\cap{\rm spt}\,\mu_{\E(h)}$ may have positive measure.)
\end{remark}

\section{Equivalence of the relaxed problem}
This section of the paper will be entirely dedicated to proving Theorem \ref{relaxation thm main}, and introducing a few key lemmas which are essential to the proof. This section is a direct application of the methods used in \cite{Novack_2024}. First, we prove an intermediate theorem that states that any $(N+1)$-cluster may be approximated by a wet $N$-cluster in the following sense: 
\begin{theorem}\label{approximation thm main later}
    If $\E$ is an $(N+1)$-cluster, then given any $\de>0$, there exists a wet $N$-cluster $\F$ such that 
    \begin{align*}
        |\E(h)| &= |\F(h)|, \,\, h=1,\dots,N+1,\\
        d(\E,\F) &\le \de,\\
        P_{\rm wet}(\F) &\leq P_{\rm wet}^*(\E) + \de.
    \end{align*}
\end{theorem}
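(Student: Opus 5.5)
The plan is to reduce to clusters with polyhedral (or at least smooth, transversal) interfaces, where the ``thickening'' of dry interfaces into thin liquid films can be done explicitly, and then correct volumes with the volume-fixing variations of \cite[Theorem 29.14]{Maggi_Book}, as in \cite{Novack_2024}.

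\emph{Step one (regularization).} Given an $(N+1)$-cluster $\E$ and $\de>0$, I will first approximate $\E$ in $L^1$ by a cluster $\E'$ whose chambers are bounded with piecewise smooth boundaries, so that $P(\E'(h))\to P(\E(h))$ for every $h=0,\dots,N+1$, and hence $\ps(\E')\to\ps(\E)$ by \eqref{wet energy equivalent form}. A polyhedral approximation of Caccioppoli partitions (the density of polyhedral partitions in the strict topology, as used in \cite{Novack_2024}) gives this. The volume vector may drift by a small amount; I will record this and fix it at the end.

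\emph{Step two (thickening).} For the polyhedral cluster $\E'$, let $\Sigma_{\rm dry}=\bigcup_{0\le i<j\le N}\pas\E'(i)\cap\pas\E'(j)$ be the dry interfaces. For small $\eta>0$, set $T_\eta=\{y:\dist(y,\Sigma_{\rm dry})<\eta\}\setminus\E'(N+1)$, and define $\F_\eta(N+1)=\E'(N+1)\cup T_\eta$ and $\F_\eta(h)=\E'(h)\setminus T_\eta$ for $h\le N$. Every air–air or air–exterior interface then becomes a thin liquid slab. By construction $\bigcup_{h\le N}\pas\F_\eta(h)\hnsub\pas\F_\eta(N+1)$, so $\F_\eta$ is a wet $N$-cluster, and a direct computation on the polyhedral pieces gives
\[
P(\F_\eta(N+1))\le P(\E'(N+1))+2\h(\Sigma_{\rm dry})+O(\eta)=\ps(\E')+O(\eta).
\]
The error collects the contributions near the $(n-1)$-skeleton (junctions), which have $\h$-measure $O(\eta)$ because the skeleton has finite $\H^{n-1}$ measure, together with the curvature of the parallel sets. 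The volumes change by $O(\eta)$, and in particular $|\F_\eta(N+1)|\ge|\E'(N+1)|$.

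\emph{Step three (volume fixing).} Apply the volume-fixing diffeomorphisms $\Psi(a,\cdot)$ of \cite[Section 29.6]{Maggi_Book}, supported in fixed balls and built from $\E$. These exist uniformly for all clusters $L^1$-close to $\E$. Properties \eqref{volume fixing for volume fixing variations} and \eqref{perimeter estimates for volume fixing variations} restore $|\F(h)|=|\E(h)|$ for $h=1,\dots,N+1$, at a perimeter cost $C\,P(\F_\eta(N+1))\,|a|$. Here $|a|\lesssim d(\E,\E')+\eta$, which is small. A diffeomorphism preserves the inclusion $\bigcup\pas\F(h)\hnsub\pas\F(N+1)$, so the result is still wet. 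Choosing $\E'$ close to $\E$ and $\eta$ small yields all three conclusions of the theorem.

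The main obstacle is step two near the singular skeleton, and step one, which must guarantee that the approximation converges in energy simultaneously for all chambers including the exterior. If the polyhedral density result is not directly quotable, I would instead smooth each $1_{\E(h)}$ and select generic level sets, then resolve overlaps. The thickening estimate is where junctions of three or more chambers (including liquid) must be shown to cost only $O(\eta)$.
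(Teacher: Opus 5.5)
Your argument is correct, but it takes a genuinely different route from the paper.

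\textbf{The paper's route.} Adapting \cite{Novack_2024}, the paper never regularizes $\E$. It works directly with the $\h$-rectifiable dry set $K\setminus\pas\E(N+1)$, where $K=\bigcup_{i=1}^{N+1}\pas\E(i)$. It splits this set into compact Lipschitz graphs and covers it iteratively by disjoint small open sets $D_m$ with Lipschitz boundary and $P(D_m)\le 2\h(f_m(A_m))+\beta 2^{-m}$; each round halves the uncovered area. With $F=\bigcup_k\overline{\cald_k}$ it sets $\F(N+1)=\E(N+1)\Delta F$ and $\F(h)=\E(h)\setminus F$, then volume-fixes exactly as in your Step three. So $\E$ is altered only on a set of volume $O(\de^*)$, and no convergence of perimeters of approximating clusters is ever needed.

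\textbf{Your route.} You put all the difficulty into Step one, which needs strict density of polyhedral Caccioppoli partitions. That is the theorem of Braides, Conti and Garroni; it is not what \cite{Novack_2024} uses, since that argument is the covering just described. To apply it you need a bounded domain: truncate the chambers by a large ball at a good radius, and after approximating, truncate the approximants once more so that no air chamber touches the boundary. Since each $P(\E(h))$ is lower semicontinuous, convergence of $\sum_h P(\E'(h))$ gives convergence of every term, hence of $\ps$.

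Given that, your Step two is cleaner than you fear. Write $\Sigma_{\rm dry}$ as finitely many convex $n$-polytopes with disjoint relative interiors. The Steiner formula then gives $\h(\pa\{\dist(\Sigma_{\rm dry},\cdot)<\eta\})\le 2\h(\Sigma_{\rm dry})+O(\eta)$. Wetness holds for every $\eta$: an $\h$-generic point where two of the chambers $0,\dots,N$ of $\F_\eta$ meet would lie on $\Sigma_{\rm dry}$. It would then lie in the open tube, which is liquid, a contradiction.

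\textbf{Comparison.} Your route makes the factor $2$ transparent and the thickening elementary. The price is a deep external theorem that must give simultaneous convergence of all interface areas, including those of the unbounded exterior chamber. The paper's covering argument is local and works directly at the level of rectifiable sets.

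Do not rely on your fallback of mollifying the indicators, taking generic level sets and resolving overlaps. With three or more phases, resolving overlaps while controlling every interface is exactly what the polyhedral density theorem is needed for.
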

This theorem is almost identical to \cite{Novack_2024} Theorem 2.3, but since we are working in the setting of clusters instead of the setting established in \cite{King_2022} we make a few necessary modifications and re-prove the theorem. Theorem \ref{relaxation thm main} will follow almost immediately.
\begin{proof}
    [Proof of Theorem \ref{approximation thm main later}]
  
    \medskip
  
    \noindent {\it Step one}: We show it suffices to prove the following statement: Given an $(N+1)$-cluster $\E$, there exists a wet $N$-cluster $\E_j$ and a sequence $\de_j\to0$ such that for all $h$,
    \begin{align}
        d(\E_j,\E) &\le \de_j,\\
        P_{\rm wet}(\E_j) &\leq P_{\rm wet}^*(\E) + \de_j.
    \end{align}
    Indeed, if $\E_j$ is one such sequence, then by \cite{Maggi_Book} Section 29.6 we have the existence of a sequence of diffeomorphisms $\psi_j$ such that 
    \begin{align*}
        |\psi_j(\E_j(h))| &= |\E(h)|,\\  d(\psi_j(\E_j),\E) &\leq C_0\delta_j,\\P_{\rm wet}(\psi_j(\E_j)) \leq  P_{\rm wet}(\E_j)(1 + 2C_1d(\E_j,\E)) &\leq (P_{\rm wet}^*(\E) + \de_j)(2\de_jC_1 + 1).
    \end{align*}
    Thus for a small enough $\de_j$, we have the conditions 
    \begin{align*}
        |\psi_j(\E_j(h))| &= |\E(h)|,\\  d(\psi_j(\E_j),\E) &\leq \de,\\P_{\rm wet}(\psi_j(\E_j)) &\leq P_{\rm wet}^*(\E) + \de.
    \end{align*}
    These conditions are exactly the desired result of the theorem.
  
    \medskip
  
    \noindent {\it Step two}: Here, we will recall some of the important results from the corresponding proof in \cite{Novack_2024}, the proofs may be found there. We merely state the information here for the convenience of the reader.
    
    \medskip
  
    \noindent {\bf (i)}: Given an $\h$-rectifiable set $R$ with $\h(R)<\infty$, then we may decompose $R$ as the countable union of compact Lipschitz graphs
    \begin{equation*}
        R \hneq \bigcup_{m=1}^\infty f_m(A_m)
    \end{equation*}
    Such that there exists $x_m\in\rn$, $t_m>0$ with $f_m(A_m) \subset B_{t_m}(x_m)$. 
    \medskip
  
    \noindent {\bf (ii)}: Suppose we have a closed $F$, a set of finite perimeter $E$, a rectifiable set $R\subset F^c$ with a decomposition as in {\bf (i)}, and $\b\in(0,1)$. Then, there exists $M\in\N$ and open sets of finite perimeter $D_1,\dots,D_M$ such that 
    \begin{align}
        \h(R\setminus\bigcup_{m=1}^M \overline{D_m}) < \frac{1}{2}\h(R),\\\label{boundary of the replacement pieces}
        \overline{D_m} \cap \overline{D_{m'}} = \emptyset,\,\, {\rm for}\,\,1\le m<m'\le M,\\\label{ds are separated}
        d(F,D_m) > 0,\\
        f_m(A_m) \subset D_m \subset B_{t_m}(x_m),\,\,\forall m\leq M,\\\label{novack nice boundary}
        \pa D_m \text{ is Lipschitz, and } \h(\pa D_m\setminus\pas D_m) = 0,\\
        \sum_{m=1}^M|D_m| \le \beta,\\
        \big|B_s(z)\cap \bigcup_{m=1}^M D_m\big| \leq \beta|B_s(z)|, \forall z\in F, s>0,\\\label{disjoint boundary condition}
        \h(\pa D_m\cap\pas E) = 0,\\
        P(D_m) \leq 2\h(f_m(A_m)) + \frac{\beta}{2^m},\,\, \forall m\leq M.
    \end{align}

    \medskip
  
    \noindent {\it Step three}: Fixing a $\de_j$ we construct a corresponding cluster satisfying the conditions of step 1 for this $\de_j$. In this step, we set up the iteration seen in \cite{Novack_2024} with the single modification that 
    \begin{equation*}
        K = \bigcup_{i=1}^{N+1}\pas\E(i).
    \end{equation*}
    \noindent
    We set $F = \emptyset = F_{-1},\, E = \E(N+1),\,R_0 = K\setminus\pas E$ and $\beta_0 = \min\{\frac{1}{8},\frac{\de^*}{4}\}$, where $\de^*$ will be specified later. This yields $M_0\in\N$ and open sets of finite perimeter $D_1^0,\dots,D^0_{M_0}$ by step two. We collect these sets by setting $\cald_0 = \cup_{m=1}^{M_0} D^0_m$. We then iterate by applying step two to $F = F_{k-1} = \cup_{m=0}^{k-1}\overline{\cald_m}$, $R=R_k = K\setminus(\pas E\cup F_{k-1})$ and $\beta_k = \min\{\frac{1}{2^{k+3}},\frac{\de^*}{2^{k+2}}\}$. This produces $M_k\in\N$, families of sets $D^k_1,\dots,D^k_{M_k}$ and $\cald_k = \cup_{m=1}^{M_k}D^k_m$ satisfying
    \begin{align}\label{Remove half of K each time}
        \h(K\setminus(\pas E \cup F_k))\leq \frac{1}{2}\h(K\setminus(\pas E \cup F_{k-1})),\\\label{distance DF is positive novack}
        d(\cald_k,F_{k-1})>0,\\\label{exp dec size}
        |\cald_k|\leq \frac{\de^*}{2^{k+2}},\\
        |B_s(z)\cap\cald_k|\leq\frac{|B_s(z)|}{2^{k+3}}\,\, \forall z\in F_{k-1},s>0,\\
        \h(\pa \cald_k \cap \pas E) = 0,\\\label{perimeter of set D novack}
        P(\cald_k)\leq 2\h(K\cap\overline{\cald_k}\setminus\pas E) + \frac{\de^*}{2^{k+2}}.
    \end{align}
    We utilize these $F_k$'s to construct a cluster $\F_k$ as follows:
    \begin{align*}
        \F_k(N+1) &= \E(N+1)\Delta F_{k},\\
        \F_k(h) &= \E(h) \setminus F_k.
    \end{align*}
    Before moving on to the next step, we take note that the set $F = \cup_{k}\ov{\cald_k}$ satisfies, by the argument of \cite[Step five of the proof of Theorem 2.3]{Novack_2024} (which uses only the volume bounds \eqref{exp dec size}, the density estimate on $\cald_k$ at points of $F_{k-1}$, and the Lipschitz regularity of $\pa\cald_k$),
    \begin{align}\label{volume of replacement set novack}
        |F|&\leq \frac{\de^*}{2},\\\label{limit of bdry is bdry of limit Novack}
        \pas F &\hneq \cup_{k} \pas\cald_k.
    \end{align}
    
    \medskip
  
    \noindent {\it Step four}: We take the limit of the clusters $\F_k$ and verify it satisfies the conditions of step one. As $|F\Delta F_k| \to 0$, it follows immediately that $d(\F,\F_k) \to 0$ where
    \begin{align*}
        \F(N+1) &= \E(N+1)\Delta F,\\
        \F(h) &= \E(h) \setminus F.
    \end{align*}
    Now we show that $\F$ is a wet $N$-cluster. First, from \eqref{boundary of the replacement pieces}, \eqref{disjoint boundary condition} and \eqref{limit of bdry is bdry of limit Novack} we observe that
    \begin{equation}\label{symmetric diff of bdry is union novack}
        \pas\F(N+1) \hneq \pas\E(N+1) \cup\pas F.
    \end{equation}    Then, for the (non-exterior) air chambers we see
    \begin{equation}\label{red bdry of liquid novack}
        \pas\F(h) \hneq (\pas\E(h) \cap F^{(0)}) \cup (\pas F \cap \E(h)^{(1)}) \cup \{\nu_{F} = -\nu_{\E(h)}\}.
    \end{equation}
    Thus to show $\F$ is a wet $N$-cluster it suffices by \eqref{symmetric diff of bdry is union novack} and \eqref{red bdry of liquid novack} to demonstrate
    \begin{equation*}
        \pas\E(h) \cap F^{(0)} \hnsub \pas\E(N+1).
    \end{equation*}
    Indeed, by \eqref{Remove half of K each time} and continuity of measure we see that 
    \begin{equation}\label{F eats K in novack construction}
        K \hnsub F \cup\pas\E(N+1).
    \end{equation}
    Because the $\cald_k$ are open sets and \eqref{novack nice boundary} holds, we see that 
    \begin{equation}\label{decomposition with novack nice boundary}
        \h(F\cap F^{(0)}) = 0.
    \end{equation}
    The combination of \eqref{F eats K in novack construction} and \eqref{decomposition with novack nice boundary} will yield
    \begin{align*}
        \h(\pas\E(h) \cap F^{(0)}\setminus \pas\E(N+1)) &\leq \h(K \cap F^{(0)}\setminus \pas\E(N+1))\leq \h(F\cap F^{(0)}) = 0,
    \end{align*}
    thus demonstrating that $\F$ is indeed a wet $N$-cluster. We see by \eqref{volume of replacement set novack} that 
    \begin{equation*}
        d(\E,\F) = \sum_{h=1}^{N+1}|\E(h)\Delta\F(h)| \leq (N+1)\de^*.
    \end{equation*}
    By \eqref{ds are separated}, \eqref{distance DF is positive novack} \eqref{perimeter of set D novack}, \eqref{limit of bdry is bdry of limit Novack} and \eqref{symmetric diff of bdry is union novack} we observe that
    \begin{align*}
        P_{\rm wet}(\F) &= P(\F(N+1)) = P(\E(N+1)) + P(F)\\
        &= P(\E(N+1)) + \sum_{k=1}^\infty P(\cald_k)\\
        &\leq P(\E(N+1)) + \sum_{k=0}^\infty 2\h(K\setminus(\pas\E(N+1)\cup\ov{\cald_k}) + \frac{\de^*}{2^{k+2}}\\
        &\leq P(\E(N+1)) + 2\h(K\setminus\pas\E(N+1)) + \frac{\de^*}{2}\\
        &= P^*_{\rm wet}(\E) + \frac{\de^*}{2}.
    \end{align*}
    Choosing $\de^* \leq \frac{\de_j}{N+1}$ we observe $\F$ satisfies the conditions of step one of the theorem.
\end{proof}

\begin{proof}
    [Proof of Theorem \ref{relaxation thm main}] 
    Fix $j\in\N$ and let $\E$ be an $(N+1)$-cluster with $|\E| = (v,\e)$ and $\ps(\E) \leq \varphi_\e(v) + \frac1j$. By Theorem \ref{approximation thm main later} we may choose a wet $N$-cluster $\F_j$ with $|\F_j(h)| = |\E(h)|$ and $$\psi_\e(v) \leq P_{\rm wet}(\F_j) \leq \ps(\E) + \frac{1}{j} \leq \varphi_\e(v) + \frac{2}{j}.$$
    Letting $j\to \infty$ shows $\psi_\e(v) \leq \varphi_\e(v).$ Since for any wet $N$-cluster $\F$ it holds that $\ps(\F) = P_{\rm wet}(\F)$, we trivially have $\varphi_\e(v)\leq \psi_\e(v).$
\end{proof}

\begin{remark}
    By Lemma \ref{truncation for wet cluster}, we observe that a minimizer $\E$ for $\varphi_\e(v)$ (as in \eqref{relaxed problem defn}) must be bounded. Otherwise, we may truncate and apply a volume fixing variation to find a competitor with lower energy. Following the proof of Theorem \ref{approximation thm main later}, we may construct a uniformly bounded sequence of wet clusters $\E_j$ such that $|\ps(\E) - P_{\rm wet}(\E_j)| \to 0$ and $|\E_j(h)| = |\E(h)|$. Thus all minimizers to the relaxed problem remain limits of wet clusters, further validating our choice of relaxation.
\end{remark}

\section{Regularity of generalized minimizers}
In this section, we prove Theorem \ref{regularity thm main}. The crux of this proof is demonstrating that for a $\varphi_\e(v)$-minimal cluster, each dry chamber is a $(\La,r)-$minimizer. This allows us to deduce that the reduced boundary of each dry chamber is smooth. We then derive the Euler-Lagrange equations and show that each interface $\pas\E(i)\cap\pas\E(j)$ is constant mean curvature. Finally, we borrow regularity theory from the theory of free boundary problems to deduce a $C^{1,1}$ regularity for the transition region between chambers $\pa\E(N+1)\setminus\pas\E(N+1)$.

\begin{proof}
    [Proof of Theorem \ref{regularity thm main}] \noindent {\it Step one}: We show there exist positive constants $\Lambda$ and $r_0$ such that $\E(h)$ is a $(\Lambda,r_0)$-perimeter minimizer for $h = 1\dots N$. Without loss of any generality, we prove this for $\E(N)$. The proof for $h = 0$ follows via the exact same rather long computations, and is therefore omitted in the paper without much conceptual loss. To start, observe that by a standard volume fixing variations argument (such as the one in \cite{Maggi_Book} Theorem 29.14), we may find a $\Lambda'$ and $r_0$ such that
    \begin{equation}\label{vol fixing variations wet version}
        \ps(\E) \leq \ps(\F) + \Lambda' d(\F,\E),
    \end{equation}
    as long as diam$(\F\Delta\E) < 2r_0.$ Indeed, consider some set $V$ such that $V\Delta E \subset\subset B_{r_0}(x)$ for some $x$. Then we define a new cluster $\F$ as follows:
    \begin{align*}
        \F(N+1) &= (\E(N)\cup \E(N+1))\Delta V\\
        \F(N) &= (\E(N) \cup \E(N+1))\cap V\\
        \F(h) &= \E(h) \setminus V \text{ for } h = 1,2\dots N-1
    \end{align*} 
    The majority of step one will be dedicated to showing the inequality
    \begin{equation}\label{difference in wet energies replacement}
        \ps(\F) - \ps(\E) \leq P(V) - P(\E(N)).
    \end{equation}
    For brevity, we define the sets 
    $$K_\E = \bigcup_{h=1}^{N}\pas\E(h)\setminus \pas\E(N+1), \,\, K_\F = \pas V \cup \bigcup_{h=0}^{N-1}\pas\E(h).$$
    By the definition of $\ps$, we have that
    \begin{equation}\label{wet perimeter of competitor}
        \ps(\F) = \h(\pas\F(N+1)) + 2\h(K_\F\setminus \pas\F(N+1)).
    \end{equation}
    To compute the first term, we see by \cite{Maggi_Book} Theorem 16.3 that 
    \begin{align*}
        \h(\pas\F(N+1)) = \h(\pas(\E(N)\cup \E(N+1))\Delta \pas V)
    \end{align*}
    Defining the set $A = \pas(\E(N)\cup \E(N+1))\cap \pas V$, because $\{\nu_{\E(N)} = \nu_{\E(N+1)}\}$ is $\h$ null we have that 
    \begin{align*}
        \pas\F(N+1) &\hneq [\pas(\E(N)\cup \E(N+1))\cup \pas V]\setminus A\\ &\hneq [\pas V \cup (\pas \E(N) \cap \E(N+1)^{(0)}) \cup (\E(N)^{(0)} \cap \pas\E(N+1))]\setminus A
    \end{align*}
    Now by Federer's theorem and by the fact that $\{\E(h)\}_{h=0}^{N+1}$ is a Caccioppoli partition of $\R^{n+1}$, we may rewrite this as 
    \begin{align}\nonumber
        \pas\F(N+1) &\hneq [\pas V \cup (\pas\E(N) \cap \bigcup_{h=0}^{N-1} \pas \E(h)) \cup (\pas \E(N+1) \cap \bigcup_{h=0}^{N-1} \pas \E(h)]\setminus A\\\label{Boundary of replacement chamber wrt K}
        &= \Big[\pas V \cup \Big([\pas \E(N) \cup \pas\E(N+1)] \cap \bigcup_{h=0}^{N-1} \pas \E(h)\Big) \Big] \setminus A.
    \end{align}
    Thus we see that 
    \begin{align}\nonumber
        \h(\pas\F(N+1)) &= \h(\pas V) - \h(A) \nonumber\\
        &\quad + \h\big((\pas \E(N) \cup \pas \E(N+1)) \cap\bigcup_{h=0}^{N-1} \pas \E(h) \setminus \pas V\big)\\\label{measure of the wet chamber in replacement}
        &= \h\big(\bigcup_{h=0}^{N-1} \pas \E(h) \cap (\pas\E(N) \cup \pas \E(N+1))\big) + \h(\pas V) - 2\h(A).
    \end{align}
    Where we implicitly have used that 
    $$\pas V \cap \bigcup_{h=0}^{N-1} \pas \E(h) \cap (\pas\E(N) \cup \pas\E(N+1)) \hneq A.$$
    We next show that
    \begin{equation}\label{kf containment}
        \bigcup_{h=1}^{N+1} \pas\F(h) \hnsub \F(N+1) \cup K_\F.
    \end{equation}
    This is immediate for all $h\neq N$. For $h = N$, we apply \cite{Maggi_Book} Theorem 16.3 to see that
    \begin{align*}
        \pas\F(N) \hneq& [\pas V \cap (\E(N) \cup \E(N+1))^{(1)}] \cup[V^{(1)} \cap \pas(\E(N)\cup\E(N+1))] \\&\cup \{\nu_V = \nu_{\E(N)\cup\E(N+1)}\}.
    \end{align*}
    As $\pas V \subset K_\F$ it suffices to show
    \begin{align*}
        \pas(\E(N)\cup\E(N+1)) \hnsub K_\F,
    \end{align*}
    but this was done in \eqref{Boundary of replacement chamber wrt K}. Thus we have shown \eqref{kf containment}, which we may use to bound the multiplicity 2 energy of $\F$ by
    \begin{align}\label{K containment upper bound}
        \h(\bigcup_{h=0}^{N+1}\pas\F(h) \setminus \pas\F(N+1)) \leq  \h(K_\F \setminus\pas\F(N+1)).
    \end{align}
    By using \eqref{Boundary of replacement chamber wrt K} with the set $K_\F\setminus \pas\F(N+1),$ we observe 
    \begin{align}\nonumber
        K_\F\setminus \pas\F(N+1) &\hneq \big[\bigcup_{h=0}^{N-1}\pas\E(h) \setminus (\pas V \cup \pas \E(N) \cup \pas\E(N+1))\big] \cup (K_\F \cap A)\\\label{mult 2 replacement estimate}
        &\hnsub \big[\bigcup_{h=0}^{N-1}\pas\E(h) \setminus (\pas \E(N) \cup \pas\E(N+1))\big] \cup A.
    \end{align}
    By combining \eqref{K containment upper bound} and \eqref{mult 2 replacement estimate} we may show
    \begin{align*}
        \h(\bigcup_{h=0}^{N+1}\pas\F(h) \setminus \pas\F(N+1)) \leq \h\big(\bigcup_{h=0}^{N-1}\pas\E(h) \setminus (\pas \E(N) \cup \pas\E(N+1))\big) + \h(A).
    \end{align*}
    In tandem with \eqref{measure of the wet chamber in replacement}, this shows that
    \begin{equation*}
        \ps(\F) \le P(V) + \sum_{h = 0}^{N-1}P(\E(h)).
    \end{equation*}
    We recall \eqref{wet energy equivalent form} to see that the above immediately proves \eqref{difference in wet energies replacement}. Now by applying \eqref{vol fixing variations wet version} to \eqref{difference in wet energies replacement} and observing that $d(\E,\F) \leq (N+1)d(\E(N),V)$, we see that 
    \begin{align*}
        P(\E(N))\leq P(V) + (N+1)\Lambda' d(\E(N),V).
    \end{align*}
    Choosing $\Lambda = (N+1)\Lambda'$ we complete step one. Before proceeding, we remark that with \cite{Maggi_Book} Theorem 28.1 we immediately see that (i) holds.
    
    \medskip
  
    \noindent {\it Step two}: We compute the Euler-Lagrange equations for this problem to prove assertion (ii) of the theorem. In order to do so, we make the notational shortcut $\Sigma_{ij} = \pas\E(i) \cap\pas\E(j)$ and $\pa_{ij} = \pa\E(i)\cap\pa\E(j)$. We start by proving the following intermediate step:\\
    {\it Intermediate step}: For any $x\in \Sigma_{ij}\setminus\S'$ there exists a ball $B_x$ containing $x$ such that 
    \begin{equation}\label{intermediate step}
        \bigcup_{h=0}^{N+1} \pa\E(h) \cap B_x\subset\S_{ij}.
    \end{equation}
    We observe that $x\in \E(i)^{(\frac{1}{2})}\cap\E(j)^{(\frac{1}{2})}$, and therefore $\E(h)^{(0)}$ for $h\neq i,j$. As $\E(h)$ is a $(\Lambda,r_0)$ perimeter minimizer for $h \neq N+1$, it satisfies the density estimates of \cite{Maggi_Book} Theorem 21.11 implying that 
    \begin{equation}\label{boundary for lambda r clusters}
        x\notin \pa\E(h), \text{ for } h \neq i,j,N+1.
    \end{equation}
    If moreover $N+1\notin\{i,j\}$, then also $x\notin\pa\E(N+1)$: indeed $x\in\E(N+1)^{(0)}$ gives $x\notin\pas\E(N+1)$, and $x\notin\S'$ by assumption. Thus we may choose a ball $B'$ containing $x$ which is disjoint from the set
    \begin{align*}
        \bigcup_{h\neq i,j} \E(h) \cup\bigcup_{(h,k)\neq(i,j)}\pa_{hk}.
    \end{align*}
    Moreover, \cite{Maggi_Book} Theorem 26.5 implies we may take a smaller ball $B \subset B'$ such that $B \cap (\pa_{ij}\setminus\S_{ij}) = \emptyset$, where $B$ is clearly the desired ball. 
    
    From this, we may construct vector fields $Z_i\in C^1_c(\R^{n+1},\R^{n+1})$ for $i = 1,2\dots N+1$ which satisfy for an arbitrarily small parameter $\eta$
    \begin{equation}\label{fields for IFT for EL}
        \int_{\pas\E(j)} Z_i\cdot\nu_{\E(j)}\, d\h = \mathbf{1}\{i=j\} + O(\eta)\qquad\text{for } i,j = 1,\dots,N+1.
    \end{equation}
    Indeed, by \cite{Maggi_Book} Lemma 29.16, for every $i\in\{1,\dots,N+1\}$ there exist $N_i\in\N$ and distinct indices $h_0,h_1,\dots,h_{N_i}\in\{0,\dots,N+1\}$ such that
    \begin{equation*}
        h_0 = i,\qquad h_{N_i} = 0,\qquad \h(\S_{h_{j-1}h_j})>0\quad\text{for every } j = 1,\dots,N_i;
    \end{equation*}
    that is, every chamber is joined to the exterior by a chain of interfaces of positive area.
    By Federer's Theorem, for each $j = 1,\dots,N_i$ there exists $x_j\in\S_{h_{j-1}h_j}$ with $$x_j\in\E(h_{j-1})^{(\frac{1}{2})}\cap\E(h_{j})^{(\frac{1}{2})}.$$
    Then we may fix $\eta>0$ as a small parameter, and choose $r_j$ such that 
    \begin{equation}
        \frac{|B_{r_j}(x_j)\setminus (\E(h_{j-1})\cup\E(h_{j}))|}{|B_{r_j}(x_j)|}<\eta.
    \end{equation}
    Next, we choose $X_j \in C^1_c(\R^{n+1},\R^{n+1})$ such that supp$(X_j)\subset\subset B_{r_j}(x_j)$ as follows.
    We fix a cutoff $\phi\in C^\infty_c(B_1(0))$ such that 
    $\phi = 1 \text{ on } B_{\frac{1}{2}}(0) \text{ and } 0\leq\phi\leq1$.
    Letting $\nu_j = \nu_{\E(h_{j-1})}$, we define 
    \begin{align*}
        X_j &= c_j\,\phi\Big(\frac{x - x_j}{r_j}\Big)\nu_j(x_j) \eqqcolon c_j\,\phi_j\,\nu_j^*,\\
        c_j&\text{ chosen so that } \int_{\pas\E(h_{j-1})} X_j\cdot\nu_{j}\,d\h = 1 .
    \end{align*}
    We claim that $c_j \leq C(n)r_j^{-n}$ for $r_j$ sufficiently small ($r_j$ can be shrunk without loss of any generality). Indeed, 
    \begin{align}\label{normalization lower bound}
        \int_{\pas\E(h_{j-1})} \phi_j\nu_j^* \cdot \nu_j\,d\h &= \int_{\pas\E(h_{j-1})}\phi_j|\nu_j|^2 + \phi_j(\nu^*_j - \nu_j)\cdot\nu_j\,d\h\\\nonumber
        &\ge \h(\pas\E(h_{j-1})\cap B_{r_j/2}(x_j)) - \int_{\pas\E(h_{j-1})}\phi_j(\nu_j - \nu^*_j)\cdot\nu_j\,d\h .
    \end{align}
    Since $x_j\in\pas\E(h_{j-1})$, by \cite{Maggi_Book} Corollary 15.8, (15.9), we have
    $$\lim_{r\to0}\frac{\h(\pas\E(h_{j-1})\cap B_r(x_j))}{\om_n r^n} = 1,$$
    and thus, for $r_j$ small enough,
    $$\h(\pas\E(h_{j-1})\cap B_{r_j/2}(x_j)) \geq \frac{1}{2}\om_n\Big(\frac{r_j}{2}\Big)^n .$$
    By definition of the reduced boundary,
    \begin{align*}
        \frac{\int_{\pas\E(h_{j-1})}\phi_j(\nu_j - \nu^*_j)\cdot\nu_j\,d\h}{\h(\pas\E(h_{j-1})\cap B_{r_j}(x_j))} \leq \frac{\int_{\pas\E(h_{j-1})\cap B_{r_j}(x_j)}(\nu_j - \nu^*_j)\cdot\nu_j\,d\h}{\h(\pas\E(h_{j-1})\cap B_{r_j}(x_j))}\\
        = 1 - \nu_j^*\cdot\frac{\int_{\pas\E(h_{j-1})\cap B_{r_j}(x_j)} \nu_j\,d\h}{\h(\pas\E(h_{j-1})\cap B_{r_j}(x_j))} \to 0\quad\text{as } r_j\to0,
    \end{align*}
    where we used $(\nu_j-\nu_j^*)\cdot\nu_j = 1-\nu_j^*\cdot\nu_j\geq0$ and $0\leq\phi_j\leq1$ in the first inequality. This implies that for $r_j$ sufficiently small, $$\int_{\pas\E(h_{j-1})} \phi_j\nu_j^* \cdot \nu_j\,d\h \geq \frac{\om_n}{2}\Big(\frac{r_j}{2}\Big)^n - \frac{\om_n}{4}\Big(\frac{r_j}{2}\Big)^n = \frac{\om_n}{4}\Big(\frac{r_j}{2}\Big)^n.$$
    proving the claim. Therefore we see that $$|\Div X_j| \le \frac{c_j\|\nabla\phi\|_\infty}{r_j}\le\frac{C(n,\phi)}{r_j^{n+1}}.$$
    The divergence theorem and the fact that supp$(X_j)\subset B_{r_j}(x_j)$ will give for any $h$
    $$\Big|\int_{\pas\E(h)}X_j\cdot \nu_{\E(h)}\,d\h\Big| = \Big|\int_{\E(h)\cap B_{r_j}(x_j)}\Div X_j\,dy\Big| \leq \frac{C(n,\phi)}{r_j^{n+1}}|\E(h) \cap B_{r_j}(x_j)| .$$
    Thus for any $h\notin\{h_{j-1},h_j\}$, $$\Big|\int_{\pas\E(h)}X_j\cdot \nu_{\E(h)}\,d\h\Big| \leq C(n,\phi)\,\eta.$$
    Moreover, 
    $$\sum_{h=0}^{N+1}\int_{\pas\E(h)}X_j\cdot\nu_{\E(h)}\, d\h = \int_{\rn}\Div X_j\,dy = 0$$ implies that 
    $$\int_{\pas\E(h_j)}X_j\cdot\nu_{\E(h_j)}\, d\h = -1 + O(\eta).$$
    Finally, set $Z_i = \sum_{j=1}^{N_i}X_j$. Since the $h_j$ are distinct, the fluxes computed above telescope along the chain: the flux of $Z_i$ into $\E(h_0) = \E(i)$ is $1+O(\eta)$, into $\E(h_k)$ for $1\leq k<N_i$ it is $(-1+O(\eta))+1 = O(\eta)$, and into every chamber not on the chain it is $O(\eta)$; this is \eqref{fields for IFT for EL}.
    With this in mind, consider an arbitrary $X\in C^1_c(\R^{n+1},\R^{n+1})$, and define the functions
    \begin{align*}
        f_{t,s}(x) &= x + tX(x) + \sum_{i=1}^{N+1}s_iZ_i(x)\\
        \Phi(t,s) &= \big(|f_{t,s}(\E(1))|,\dots,|f_{t,s}(\E(N+1))|\big)\in\R^{N+1}
    \end{align*}
    By \eqref{fields for IFT for EL} and the first variation of volume, $D_s\Phi(0) = I + O(\eta)$, which is invertible for $\eta$ small. Thus we may apply the implicit function theorem to see that in some neighborhood of the origin, there exists $s = s(t)$ such that $\Phi(t,s(t)) = \Phi(0,0)$ For brevity, we define $g_t = f_{t,s(t)}$. Next, we may define the function 
    \begin{align*}
        m(t) = \sum_{h=0}^{N} P\big(g_t(\E(h))\big)
    \end{align*}
    By Theorem \ref{relaxation thm main}, we may conclude that $m'(0) = 0$. By the first variation of perimeter and the chain rule, we see that
    \begin{align*}
        m'(0) &= \sum_{h=0}^{N}\int_{\pas\E(h)} \Div^{\pas\E(h)}\Big(X + \sum_{i=1}^{N+1}s_i'(0)Z_i\Big)\,d\h,\\
        s'(0) &= -D_t\Phi(0),\qquad (D_t\Phi(0))_i = \int_{\pas\E(i)}X\cdot\nu_{\E(i)}\,d\h.
    \end{align*}
    Observe the vector $\lambda\in\R^{N+1}$ with components
    \begin{align*}
        \lambda_i = \sum_{h=0}^N\int_{\pas\E(h)}\Div^{\pas\E(h)}(Z_i)\, d\h
    \end{align*}
    is independent of our choice of $X$, and therefore we observe that 
    \begin{align}\label{EULER LAGRANGE}
        \sum_{h=0}^N\int_{\pas\E(h)} \Div^{\pas\E(h)}X\,d\h - \sum_{i=1}^{N+1} \lambda_i\int_{\pas\E(i)} X\cdot\nu_{\E(i)}\,d\h = 0.
    \end{align}
    Before proceeding, we observe that our intermediate step plus our Euler-Lagrange equations above will imply that $\S_{ij}$ is a constant mean curvature hypersurface, and thus smooth. This completes the proof of assertion (ii). 
    
    \medskip
  
    \noindent {\it Step three}: We prove the $C^{1,\a}$ version of (iii), that is, \eqref{c11 cap K}, \eqref{c11 bdry} and \eqref{c11 chamber} with $C^{1,\a}$ in place of $C^{1,1}$, and we express the curvatures of the interfaces in terms of the Lagrange multipliers of Step two. For $x\in \pa\E(N+1) \setminus(\pas\E(N+1) \cup \S)$, by \eqref{boundary for lambda r clusters} there exist $i,j$ such that $x\in \pa_{i,N+1} \cap\pa_{j,N+1}\cap \S_{ij}$ and $x\notin\pa_{hk}$ for $(h,k)\neq (i,N+1)$ or $(j,N+1)$. Therefore we may take a ball $B_r(x)$ such that 
    \begin{equation}\label{parition for tangential splitting}
        \{B_r(x) \cap \E(i), B_r(x)\cap\E(j), B_r(x)\cap\E(N+1)\} \text{ is an } \h \text{ partition of } B_r(x).
    \end{equation}
    Up to further shrinking $r$, by \cite{Maggi_Book} Theorem 26.3 we know that there exists a direction $\nu = \nu_{\E(j)}(x)$ and $u_1,u_2\in C^{1,\alpha}(\DD^\nu_r(x),(-\frac{r}{4}, \frac{r}{4}))$ with $u_1(x) = u_2(x) = 0$, $\nabla u_1(x) = \nabla u_2(x) = 0$ and
    \begin{align*}
        \E(i) \cap \CC^\nu_r(x) &= \{y + t\nu : y\in \DD^\nu_r(x), r > t > u_2(y)\},\\
        \E(j) \cap \CC^\nu_r(x) &= \{y + t\nu : y\in \DD^\nu_r(x), -r < t < u_1(y)\}.
    \end{align*}
    Along with \eqref{parition for tangential splitting}, this further implies that 
    \begin{align*}
        \E(N+1)\cap \CC^\nu_r(x) &= \{y+t\nu : y\in \DD^\nu_r(x), u_1(y) < t < u_2(y)\}.
    \end{align*}
   The above statements are equivalent to $C^{1,\alpha}$ analogues \eqref{c11 cap K}, \eqref{c11 bdry} and \eqref{c11 chamber}. Finally, we record that \eqref{EULER LAGRANGE} implies that the mean curvature of $\S_{ij}$ oriented by $\nu_{\E(i)}$ is given by 
    \begin{equation}\label{curvature in terms of lagrange multipliers}
        H_{ij} = \frac{\lambda_i - \lambda_j}{1 + \mathbf{1}\{i,j\neq N+1\}},
    \end{equation}
    where $H_{ij}$ denotes the scalar mean curvature of $\S_{ij}$ oriented by $\nu_{\E(i)}$, normalized by $\int_{\S_{ij}}\Div^{\S_{ij}}X\,d\h = \int_{\S_{ij}}H_{ij}\,X\cdot\nu_{\E(i)}\,d\h$ for every $X\in C^1_c$ supported in a ball as in the intermediate step; thus a sphere has positive mean curvature with respect to its outer normal, $H_{ji} = -H_{ij}$, and we have set $\lambda_0 = 0$.
    The sign information \eqref{averaged pressure inequality} is obtained in Step four.
    
    \medskip
  
    \noindent {\it Step four}: We complete the proofs of (iii) and (iv). By step three, for $x\in \S'\setminus\S$ we may find $\nu\in\SS^{n},r>0$ and $u_1,u_2\in C^{1,\a}(\DD^\nu_r(x))$ satisfying \eqref{c11 cap K}, \eqref{c11 bdry} and \eqref{c11 chamber}. If MS$(u)$ is the distributional mean curvature operator given by
    $$\text{MS}(u)[\varphi] = \int_{\DD^\nu_r(x)}\frac{\nabla u\cdot\nabla \varphi}{\sqrt{1+|\nabla u|^2}}d\h,$$
    then \eqref{curvature in terms of lagrange multipliers} will imply that ($\MS(u)$ being the mean curvature of the graph of $u$ oriented by $\nu$, which is $\nu_{\E(j)}$ along the graph of $u_1$ and $\nu_{\E(N+1)}$ along the graph of $u_2$)
    \begin{align}\label{lower cusp curvature}
        \text{MS}(u_1) &= H_{j,N+1} \text{ on } \{u_1<u_2\}\\\label{upper cusp curvature}
        \text{MS}(u_2) &= H_{N+1,i} \text{ on } \{u_1<u_2\}.
    \end{align}
    Then we define $u_+ = u_2 - u_1$ and claim that 
    \begin{equation}\label{Maximum principle for curvatures}
        \MS(u_2) - \MS(u_1) < 0\text{ on } \{u_1<u_2\}.
    \end{equation}
    We begin by observing that for all $\varphi\in C_c^\infty(\{u_1<u_2\})$,
    \begin{equation}\label{weak form curvature}
        \MS(u_2)[\varphi] - \MS(u_1)[\varphi] = \int_{u_1<u_2} A(x)[\nabla u_+]\cdot\nabla\varphi d\h = -\int_{u_1<u_2} \varphi \text{ div}(A\nabla u_+)d\h
    \end{equation}
    where 
    \begin{equation*}
        A(x) = \int_0^1\nabla^2F(s\nabla u_1(x) + (1-s)\nabla u_2(x))ds,
    \end{equation*}
    where $F(p) = \sqrt{1+|p|^2}$.
    Thus \eqref{lower cusp curvature}, \eqref{upper cusp curvature} and \eqref{weak form curvature} imply that 
    \begin{align*}
        \MS(u_2) - \MS(u_1) &= -\text{div}(A\nabla u_+) = H_{N+1,i} - H_{j,N+1} \eqqcolon C \text{ on } \{u_1<u_2\},\\
        u_+ &\geq 0 \text{ on } \{u_1<u_2\},\\
        u_+ &= 0 \text{ on } \{u_1=u_2\} \supset \partial\{u_1<u_2\}\cap\DD^\nu_r(x).
    \end{align*}
    By step three, $u_1,u_2\in C^{1,\a}(\DD^\nu_r(x))$, so that $A\in C^{0,\a}(\DD^\nu_r(x);\R^{n\times n}_{\mathrm{sym}})$ is uniformly elliptic. We claim that $C<0$. Indeed, $u_+\geq0$ vanishes together with $\nabla u_+$ on $\{u_1=u_2\}\ni x$. Pick $x_0\in\{u_1<u_2\}$ with $|x_0-x|<r/2$ and let $B_\de(x_0)$ be the largest ball centered at $x_0$ contained in $\{u_1<u_2\}$; since $\de\le|x_0-x|<r/2$ we have $\overline{B_\de(x_0)}\subset\DD^\nu_r(x)$, and $\pa B_\de(x_0)$ touches $\{u_1=u_2\}$ at some $y_0$. If $C\geq0$, then $u_+\in C^1(\overline{B_\de(x_0)})$ is a nonconstant ($u_+(x_0)>0=u_+(y_0)$) weak solution of $-\Div(A\nabla u_+)\geq0$ in $B_\de(x_0)$ attaining its minimum at $y_0\in\pa B_\de(x_0)$, so the boundary point lemma for divergence-form operators with Dini continuous coefficients \cite[Theorem 2.1]{ApushkinskayaNazarov2019} gives $\pa u_+/\pa n(y_0)<0$, contradicting $\nabla u_+(y_0)=0$. Thus we conclude that
    \begin{equation}\label{proper sign on divergence difference}
        \text{div}(A\nabla u_+) = H_{j,N+1} - H_{N+1,i} > 0.
    \end{equation}
    Equivalently $\lambda_i+\lambda_j>2\lambda_{N+1}$, which is \eqref{averaged pressure inequality} with $H_1 = \MS(u_1)$, $H_2 = \MS(u_2)$; the remaining assertions of (iii) are immediate consequences (for the exterior use $\lambda_0 = 0$).
    Thus we may apply the regularity theory of \cite{SilvTM}, in the form of \cite[Section 6, (6.1)]{SilvTM}: $u_2\geq u_1$ are $C^{1,\a}$, on $\{u_1<u_2\}$ they solve the mean curvature equation with the constant right-hand sides $\lambda_{N+1}-\lambda_i$ and $\lambda_j-\lambda_{N+1}$, whose difference is the positive constant $-C$, and on $\mathrm{int}\{u_1=u_2\}$ the common function solves it with the average $(\lambda_j-\lambda_i)/2 = H_{ji}$ by Step two, to show that \begin{equation}\label{c11 full result}
        u_1,u_2\in C^{1,1}(\DD^\nu_r(x)).
    \end{equation} Moreover, \eqref{proper sign on divergence difference} in conjunction with \eqref{c11 full result} (which makes $A$ Lipschitz) also allows us to apply the regularity theory for free boundaries developed in \cite{focardi2013monotonicityformulasobstacleproblems}. We conclude
    \begin{equation*}
        \text{FB} = \DD^\nu_r(x)\cap\{u_+ = 0\}
    \end{equation*}
    can be partitioned into sets Reg and Sing such that Reg is relatively open in FB and such that for every $z\in$ Reg there are $r>0$ and $\beta \in (0,1)$ such that $B_r(z) \cap \text{FB}$ is a $C^{1,\b}$ embedded $(n-1)$-dimensional manifold and such that Sing $= \cup_{k=0}^{n-1}\text{Sing}_k$ is relatively closed in FB with each $\text{Sing}_k$ locally $\H^k$-rectifiable in $\DD^\nu_r(x).$ As we have already shown \eqref{c11 chamber} holds, we observe that it implies 
    \begin{equation*}
        \CC^\nu_r(x) \cap\big(\pa\E(N+1)\setminus\pas\E(N+1)\big) = \{y + u_1(y)\nu:y\in\text{FB}\}.
    \end{equation*}
    Statement (iv) from the theorem follows immediately by a covering argument.
\end{proof}

\section{Convergence to the isoperimetric cluster problem}
This section is dedicated to the proof of Theorem \ref{convergence to plateau}. 
\begin{proof}
    [Proof of Theorem \ref{convergence to plateau}] 
  
    \medskip
  
    \noindent {\it Step one}: We show that $\varphi_0$ is continuous on $\R^N_+ = \{v\in\R^N: v(h) > 0\,\,\forall h\}$. For an $N$-cluster $\F$ we write $\Phi(\F) = \sum_{h=0}^N P(\F(h)) = 2P(\F)$, so that $\varphi_0(v) = \inf\{\Phi(\F):|\F| = v\}$ by \eqref{dry problem defn}. We claim that there is $C = C(n)$ such that for all $v,w\in\R^N_+$,
    \begin{equation}\label{scaling comparison}
        \varphi_0(w) \leq \lambda^n\varphi_0(v) + C\sum_{h=1}^N\big(w(h) - \lambda^{n+1}v(h)\big)^{\frac{n}{n+1}}, \qquad \lambda = \min_{1\leq h\leq N}\Big(\frac{w(h)}{v(h)}\Big)^{\frac{1}{n+1}}.
    \end{equation}
    Indeed, by the choice of $\lambda$ we have $\lambda^{n+1}v(h)\leq w(h)$ for every $h$, so every term in the sum is nonnegative. Let $\F$ be a minimizer for $\varphi_0(v)$, which exists and is bounded by \cite{Maggi_Book} Theorem 29.1. The scaled cluster $\lambda\F$ satisfies $|\lambda\F(h)| = \lambda^{n+1}v(h)\leq w(h)$ and $\Phi(\lambda\F) = \lambda^n\Phi(\F)$. For each $h$ with $s_h = w(h)-\lambda^{n+1}v(h)>0$, let $B_h$ be a ball of volume $s_h$, with the $B_h$ pairwise disjoint and at positive distance from $\bigcup_{k\geq1}\lambda\F(k)$, and set $\F'(h) = \lambda\F(h)\cup B_h$ (with $\F'(h) = \lambda\F(h)$ if $s_h = 0$). Then $|\F'| = w$, and since $\pa B_h$ is counted once in $P(\F'(h))$ and once in $P(\F'(0))$,
    \begin{equation*}
        \Phi(\F') = \lambda^n\Phi(\F) + 2\sum_{h=1}^N P(B_h) = \lambda^n\varphi_0(v) + 2(n+1)\omega_{n+1}^{\frac{1}{n+1}}\sum_{h=1}^N s_h^{\frac{n}{n+1}}.
    \end{equation*}
    As $\F'$ is a competitor for $\varphi_0(w)$, this proves \eqref{scaling comparison}.

    Now let $v_j\to v$ in $\R^N_+$. Applying \eqref{scaling comparison} with $w = v_j$ gives $\limsup_j\varphi_0(v_j)\leq\varphi_0(v)$, since $\lambda\to1$ and $v_j(h)-\lambda^{n+1}v(h)\to0$. Applying it with the roles of $v$ and $v_j$ exchanged, that is with $\lambda_j = \min_h(v(h)/v_j(h))^{1/(n+1)}\to1$, gives
    \begin{equation*}
        \varphi_0(v)\leq\lambda_j^n\varphi_0(v_j) + C\sum_{h=1}^N\big(v(h)-\lambda_j^{n+1}v_j(h)\big)^{\frac{n}{n+1}},
    \end{equation*}
    and hence $\varphi_0(v)\leq\liminf_j\varphi_0(v_j)$. Thus $\varphi_0$ is continuous on $\R^N_+$; in particular it is lower semicontinuous, which is all that is used below.

    \medskip
  
    \noindent {\it Step two}: We can now show convergence of energies 
    \begin{equation}\label{convergence of energies}
        \lim_{\e\downarrow0}\varphi_\e(v) = \varphi_0(v).
    \end{equation}
    Given a sequence $\e_k \to 0$ and minimizers $\E_k$ of $\varphi_{\e_k}(v)$ (which exist by Theorems \ref{existence thm main} and \ref{relaxation thm main}), choose a sequence $\eta_j \downarrow 0$. By Lemmas \ref{truncation for wet cluster} and \ref{nucleation lemma}, we may choose a modified sequence $\E^j_k$ and parameters $\{\de^j_k(1),\dots,\de^j_k(N)\}$ such that
    \begin{align}\label{truncation decreases perimeter}
        \ps(\E^j_k) &\leq \ps(\E_k),\\\label{volume control on modified}
        |\E_k(h)| - \de^j_k(h) &\leq|\E^j_k(h)| \leq |\E_k(h)|,\\\label{uniform bound on truncation losses}
        \de^j_k(h) &\leq \eta_j,
    \end{align}
    and $\E^j_k$ is a uniformly bounded sequence of clusters in $k$. Here the nucleation lemma is applied to the air chambers only (the liquid chamber satisfies $|\E_k(N+1)\setminus F|\le\e_k$ trivially for $k$ large), and uniform boundedness is achieved, as in the proof of \cite{Maggi_Book} Theorem 29.1, by translating the finitely many pieces of the truncated cluster lying in far-apart balls to a common bounded region, which changes neither volumes nor energy.
    Thus the sequence $\E^j_k$ is compact so we may take a limiting $N$-cluster such that up to subsequences $\E^j_k \to \E^j$. By \eqref{volume control on modified} and \eqref{uniform bound on truncation losses}, there exists a $\de^j\leq \eta_j$ such that $|\E^j_k(h)|\to v(h) -\de^j(h).$ By lower semicontinuity of perimeter, and \eqref{truncation decreases perimeter}, we see that
    \begin{equation*}
        \liminf_{k\to\infty}\ps(\E_k)\geq \Phi(\E^j) \geq \varphi_0(v - \de^j).
    \end{equation*}
    As this holds for all $j$, we conclude by step one
    \begin{equation}\label{pre lsc bound on energies}
        \liminf_{k\to\infty}\ps(\E_k) \geq \liminf_{j\to\infty}\varphi_0(v - \de^j) \geq \varphi_0(v).
    \end{equation}
    Next, let $\F'$ be a minimal $N$-cluster for the energy $\varphi_0(v)$ and we can define an $(N+1)$-cluster $\F$ by defining the wet chamber to be a ball of volume $\e$ disjoint from the chambers of $\F'$. Then there is some constant $C(n)$
    \begin{equation*}
        \ps(\F) = \varphi_0(v) + C(n)\e^\frac{n}{n+1}.
    \end{equation*}
    In other words,
    \begin{equation}\label{trivial upper bound}
        \varphi_\e(v)\leq\varphi_0(v) + C(n)\e^\frac{n}{n+1}.
    \end{equation}
    The combination of \eqref{pre lsc bound on energies} and \eqref{trivial upper bound} imply \eqref{convergence of energies}.

\end{proof}

\bibliographystyle{alpha}
\bibliography{bibliography}

\end{document}